\DeclareMathAlphabet{\mathpzc}{OT1}{pzc}{m}{it}
\newcounter{dummy} \numberwithin{dummy}{section}
\newtheorem{theorem}[dummy]{Theorem}
\newtheorem{corollary}[dummy]{Corollary}
\newtheorem{lemma}[dummy]{Lemma}
\newtheorem{definition}[dummy]{Definition}
\newtheorem{proposition}[dummy]{Proposition}
\theoremstyle{remark}
\newtheorem{remark}[dummy]{Remark}
\newtheorem{example}[dummy]{Example}
\DeclareFontFamily{U}{mathx}{\hyphenchar\font45}
\DeclareFontShape{U}{mathx}{m}{n}{
      <5> <6> <7> <8> <9> <10>
      <10.95> <12> <14.4> <17.28> <20.74> <24.88>
      mathx10
      }{}
\DeclareSymbolFont{mathx}{U}{mathx}{m}{n}
\DeclareMathAccent{\widecheck}{0}{mathx}{"71}
\DeclareMathAccent{\wideparen}{0}{mathx}{"75}
\newcommand{\calF}{\mathcal{F}}
\newcommand{\calH}{\mathcal{H}}
\newcommand{\calL}{\mathcal{L}}
\newcommand{\calR}{\mathcal{R}}
\newcommand{\calV}{\mathcal{V}}
\DeclareMathOperator{\Ann}{Ann}
\DeclareMathOperator{\Sym}{Sym}
\DeclareMathOperator{\id}{id}
\DeclareMathOperator{\pr}{pr}
\DeclareMathOperator{\rank}{rank}
\DeclareMathOperator{\spn}{span}
\DeclareMathOperator{\Ad}{Ad}
\DeclareMathOperator{\ad}{ad}
\DeclareMathOperator{\GL}{GL}
\DeclareMathOperator{\gl}{\mathfrak{gl}}
\DeclareMathOperator{\tensorg}{\mathbf{g}}
\DeclareMathOperator{\tensorh}{\mathbf{h}}
\DeclareMathOperator{\II}{\mathrm{II}}
\DeclareMathOperator{\Hol}{Hol}
\DeclareMathOperator{\Lie}{Lie}
\newcommand{\rnabla}{\mathring{\nabla}}
\DeclareMathOperator{\FGL}{\mathscr{F}^{GL}}
\newcommand\newbullet{{\kern.8pt\displaystyle\centerdot\kern.8pt}}
\numberwithin{equation}{section}
\title[Horizontal holonomy and foliated manifolds]{Horizontal holonomy and foliated manifolds}
\author[Y.~Chitour, E.~Grong, F.~Jean and P.~Kokkonen]{Yacine Chitour, Erlend Grong, Fr\'ed\'eric Jean and Petri Kokkonen}
\address{Universit\'e Paris XI, Laboratoire des Signaux et Syst\`emes (L2S) Sup\'elec, 3 rue Joliot-Curie, 91192
Gif-sur-Yvette, France.}
\email{yacine.chitour@lss.supelec.fr}
\address{Mathematics Research Unit, University of Luxembourg, 6 rue Richard Coudenhove-Kalergi, L-1359 Luxembourg, Luxembourg}
\email{erlend.grong@uni.lu}
\address{UMA, ENSTA ParisTech, Universit\'{e} Paris-Saclay, 828 bd des Mar\'{e}chaux, 91762 Palaiseau, France}
\email{frederic.jean@ensta-paristech.fr}
\address{Helsinki, Finland}
\email{pvkokkon@gmail.com}
\subjclass[2010]{53C29,53C03,53C12}
\keywords{holonomy, totally geodesic foliations, principal connections}
\begin{document}

\let\thefootnote\relax\footnotetext{This research was partially supported by the iCODE Institute, research project of the IDEX
Paris-Saclay, by the Grant ANR-15-CE40-0018 of the ANR, and by the Hadamard Mathematics LabEx (LMH) through the grant number ANR-11-LABX-0056-LMH in the ``Programme des Investissements d'Avenir''. It is also supported by the Fonds National de la
  Recherche Luxembourg (AFR 4736116 and OPEN Project GEOMREV).}
\begin{abstract}
We introduce horizontal holonomy groups, which are groups defined using parallel transport only along curves tangent to a given subbundle $D$ of the tangent bundle. We provide explicit means of computing these holonomy groups by deriving analogues of Ambrose-Singer's and Ozeki's theorems. We then give necessary and sufficient conditions in terms of the horizontal holonomy groups for existence of solutions of two problems on foliated manifolds: determining when a foliation can be either (a) totally geodesic or (b) endowed with a principal bundle structure.
The subbundle $D$ plays the role of an orthogonal complement to the leaves of the foliation in case (a) and of a principal connection in case (b).
\end{abstract}

\maketitle

\tableofcontents
\section{Introduction}
Given a foliation $\calF$ of a Riemannian manifold $(M, \tensorg)$ there are several results related to global geometry \cite{Cai83,Cai86,Her60}, nearly K\"ahler manifolds \cite{Nag02}, PDEs on manifolds \cite{BaBo15,BeBo82,KPP97} and probability theory \cite{Elw14} relying on the property that the leaves of $\calF$ are totally geodesic submanifolds. Hence, given a foliation $\calF$ of a manifold $M$, it is natural to ask if $M$ has a Riemannian metric $\tensorg$ that makes the leaves of $\calF$ totally geodesic. Such a metric always exists locally, but global existence is far from being trivial, see e.g. \cite{Sul78,Bri81}. If we in addition require that a given transverse subbundle $D$ is orthogonal to $\calF$, an appropriate Riemannian metric may not even exist locally. We will show that the existence of such a metric can be determined using horizontal holonomy.

The idea of horizontal holonomy consists of considering parallel transport only along curves tangent to a given subbundle $D \subseteq TM$, often referred to as the horizontal bundle, hence the name. Such a holonomy was first introduced for contact manifolds in \cite{FGR97}, partially based on ideas in \cite{Rum94} and generalized later in \cite{HMCK15}. In this paper, we both define horizontal holonomy in greater generality and most importantly provide tools for computing it, in the form of analogues of the theorems of Ambrose-Singer~\cite{AmSi53} and Ozeki~\cite{Oze56}.

Our above mentioned problem of totally geodesic foliations with a given orthogonal complement can now be rewritten in terms of horizontal holonomy as follows. Consider a manifold $M$ whose tangent bundle is a direct sum $TM = D \oplus V$ with~$V$ being an integrable subbundle corresponding to a foliation~$\calF$, and $D$ being a completely controllable subbundle. Let $H \subseteq \GL(\Sym^2 V^*_x)$ be the $D$-horizontal holonomy group at an arbitrary point $x \in M$, defined relative to a vertical connection on $V$. We prove that there exists a Riemannian metric~$\tensorg$ on $M$ such that $D$ is the $\tensorg$-orthogonal complement to $V$ and the leaves of $\calF$ are totally geodesic submanifolds if and only $H$ admits a fixed point which is positive definite as a quadratic form on $V_x$. This question does not only have relevance for geometry but also for the theory on sub-elliptic partial differential operators. To be more precise, let $L$ be a second order partial differential without constant term and consider its symbol $\sigma_L\colon T^*M \to TM$ as the unique bundle map satisfying
\[
df(\sigma_L(dg)) = \frac{1}{2} \left( L(fg) - fLg - g Lf\right), \qquad \text{for any } f,g \in C^\infty(M).
\]
Let us consider the case where $\alpha (\sigma_L \alpha) \geq 0$ and the image of $\sigma_L$ is equal to a proper subbundle $D$ of $TM$;  hence $L$ is not elliptic. The typical example of such an operator $L$ is the sub-Laplacian operator associated with a sub-Riemannian manifold. Finding a totally geodesic foliation $\calF$ which is orthogonal to $D$ enables one to obtain results on the corresponding heat flow of $L$ such as analogues of the Poincar\'e inequality, the Li-Yau inequality and the parabolic Harnack inequality, see e.g. \cite{BaGa11,BKW14,BBG14,GrTh15a,GrTh15b} for details.

The horizontal holonomy of a vertical connection on $V$ can also be related to the existence of a principal bundle structure on $M$. Assume that the leaves of $\calF$ consist of the fibers of a fiber bundle $\pi\colon M \to B$ and that $D$ is a subbundle transversal to $\calF$. We can then establish a link between a trivial horizontal holonomy and the existence of a principal bundle structure of $\pi$ with $D$ as a principal connection.

The structure of the paper is as follows. In Section~\ref{sec:Definition} we give the definition of horizontal holonomy of a general connection $\omega$ on a principal bundle. In Section~\ref{sec:Equiregular}, we first limit ourselves to the case where $D$ is equiregular and bracket-generating and we introduce the main tool for deriving our results, namely two-vector-valued one-forms related to $D$ that we call selectors. In Section~\ref{sec:Computing}, we prove that the horizontal holonomy of $\omega$ is equal to the full holonomy of a modified connection and we show that the Ambrose-Singer and Ozeki theorem are still valid with an adapted modification of the curvature of $\omega$. In both cases, explicit formulas for the modified connection and curvature are given using a selector of $D$. We rewrite our results in the setting of affine connections in Section~\ref{sec:Affine} and consider horizontal holonomy of a general subbundle $D$ in Section~\ref{sec:NotBG} and Section~\ref{sec:GeneralSB}. 

In Section~\ref{sec:Applications}, we apply horizontal holonomy to vertical connections on foliations. In Section~\ref{sec:TGF}, given a foliation $\calF$ and a transversal subbundle $D$, we provide both necessary and sufficient conditions for the existence of a metric $\tensorg$ such that $\calF$ is totally geodesic with orthogonal complement $D$. In Section~\ref{sec:PB} we use horizontal holonomy to determine when a fiber bundle can be endowed with the structure of a principal bundle with a given connection $D$. We note that holonomy in these two cases is related to parallel transport of respectively symmetric tensors and vectors along curves tangent to $D$ and is not related to concepts of holonomy as in \cite{BlHe83,BlHe84}. Since in both cases, the conditions require the computation of horizontal holonomy groups, we give in Subsection~\ref{sec:curvature} explicit formulas for generating sets of the Lie algebra of such groups in terms of curvature operators.
We deal with concrete examples in Section~\ref{sec:examples}. In particular, we give examples of foliations $\calF$ that cannot be made totally geodesic, given a fixed orthogonal complement. We also completely describe the case of one-dimensional foliations.

\subsection{Notation and conventions}
If $Z$ is a section of a vector bundle $\Pi\colon V \to M$, we use $Z|_x$ to denote its value at $x$.
The space of all smooth sections of $V$ is denoted by $\Gamma(V)$. If $V$ is a subbundle of $TM$, $\Gamma(V)$ is considered as a subalgebra of $\Gamma(TM)$. If $X$ is a vector field, then $\calL_X$ is the Lie derivative with respect to $X$. We use $\Sym^2 V$ to denote the symmetric square of $V$. If $E$ and $F$ are vector spaces, then $\GL(E)$ and $\gl(E)$ denote the space of automorphisms and endomorphisms of $E$, respectively and we identify the space of linear maps from $E$ to $F$ with $E^* \otimes F$.

\subsubsection*{Acknowledgements.} The authors thank E. Falbel  for helpful comments and useful insights. 
\section{Horizontal holonomy} \label{sec:HorHol}
\subsection{Definition of horizontal holonomy group} \label{sec:Definition}

Let $M$ be a finite dimensional, smooth and connected manifold, $\pi\colon P\rightarrow M$ a smooth fiber bundle and $\calV = \ker \pi_*$ the corresponding vertical bundle. For $x\in M$, we use $P_x$ to denote the fiber $\pi^{-1}(x)$ over $x$. Let $\calH$ be an arbitrary subbundle of $TP$. An absolutely continuous curve $c\colon[t_0,t_1] \to P$ is said to be \emph{$\calH$-horizontal} if $\dot c(t) \in \calH_{c(t)}$ for almost every
$t \in [t_0, t_1]$.

A subbundle $\calH$ of $TP$ is said to be an \emph{Ehresmann connection} on~$\pi$ if $TP = \calH \oplus \calV$. Here $\pi\colon P \to M$ is considered as a surjective submersion. For every $x \in M$, $v \in T_xM$ and $p$ in the fiber $P_x$, there is a unique element $h_p v \in \calH_p$ satisfying $\pi_* h_p v = v$ called \emph{the $\calH$-horizontal lift} of~$v$. Furthermore, if $\gamma\colon[t_0, t_1] \to M$ is an absolutely continuous curve in $M$ with $\gamma(t_0) = x_0$, a \emph{horizontal lift of~$\gamma$}  is a $\calH$-horizontal absolutely continuous curve $c\colon[t_0,t_1] \to P$ that projects to $\gamma$.
As any horizontal lift $c(t)$ is solution of the ordinary differential equation
\[
\dot c(t) = h_{c(t)} \dot \gamma(t),
\]
$c(t)$ is uniquely determined by its initial condition $c(t_0) \in P_{x_0}$ on an open subinterval of $[t_0,t_1]$ containing $t_0$.
The Ehresmann connection is said to be \emph{complete} if, for every absolutely continuous $\gamma\colon[t_0,t_1] \to M$, all  corresponding horizontal lifts are defined on $[t_0,t_1]$.

A smooth fiber bundle $\pi\colon P\rightarrow M$ is called a principal $G$-bundle if it admits a continuous right action  $P\times G\rightarrow P$ such that the connected Lie group $G$ with Lie algebra~$\mathfrak{g}$ preserves the fibers and acts freely and transitively on them.
For every $\mathfrak{g}$-valued function $f \in C^\infty(P, \mathfrak{g})$, let $\sigma(f)$ be the vector field on $P$ defined by
\begin{equation} \label{SigmaVF}\sigma(f)|_p = \left. \frac{d}{dt} p \cdot \exp_G(tf(p)) \right|_{t=0}, \qquad p \in P. \end{equation}
In particular, for any element $A \in \mathfrak{g}$, we get a corresponding vector field $\sigma(A)$ by considering it as a constant function on $M$. Then $P \times \mathfrak{g} \to \calV, (p, A) \mapsto \sigma(A)|_p$ is an isomorphism of vector bundles.
A \emph{connection form}  $\omega$ on $P$ is a $\mathfrak{g}$-valued one-form~$\omega$ satisfying
\[\omega(\sigma(A)|_p) = A, \qquad \omega(v \cdot a) = \Ad(a^{-1}) \omega(v),\]
for every $A \in \mathfrak{g}$, $p \in P$, $v \in TP$ and $a \in G.$
We say that an Ehresmann connection~$\calH$ on the principal $G$-bundle $\pi\colon P\rightarrow M$ is \emph{principal} if it is invariant under the group action, i.e., if $\calH_p \cdot a = \calH_{p \cdot a}$ for any $p \in P$, $a \in G$. 
An Ehresmann connection is principal if and only if there exists a connection form $\omega$ on $P$ such that $\calH= \ker \omega$. In that case, $\calH$-horizontal curves or lifts will also be referred to as $ \omega$-horizontal. Note that a principal Ehresmann connection is complete.

In what follows, $\calH$ is assumed to be a principal Ehresmann connection on $\pi$ corresponding to a connection form $\omega$. However, non-principal Ehresmann connections will appear elsewhere in the text. For more on Ehresmann connections and principal bundles, we refer to \cite{KMS93}.

Let $\omega$ be a connection form on a principal $G$-bundle $\pi\colon P\rightarrow M$ and let $\calH= \ker \omega$. For every $p \in P$, we use $\mathscr{L}^\omega(p)$ to denote the collection of all $\calH$-horizontal lifts $c\colon[0,1] \to P$ of absolutely continuous loops $\gamma\colon[0,1] \to M$ based in $\pi(p)$ such that $c(0) = p$. \emph{The holonomy group} of $\omega$ at $p$ is then defined as
\[\Hol^\omega(p) = \left\{ a \in G \, : \, c(1) = p \cdot a \text{ for some } c \in \mathscr{L}^\omega(p) \right\}.\]
Since $M$ is connected, the groups $\Hol^\omega(p)$ coincide up to conjugation.

Let us now consider an arbitrary subbundle $D$ of $TM$. We want to introduce a type of holonomy that only considers the loops in $M$ that are $D$-horizontal.
\begin{definition}
For $p\in P$, let $\mathscr{L}^{\omega,D}(p) \subset \mathscr{L}^\omega(p)$ be the collection of $\calH$-horizontal lifts of all $D$-horizontal loops $\gamma\colon[0,1] \to M$ based in~$\pi(p) = x$. The horizontal holonomy group of $\omega$ with respect to $D$ is the group
\[\Hol^{\omega,D}(p) = \{a \in G \, : \, c(1) = p \cdot a \text{ for some } c \in \mathscr{L}^{\omega,D}(p)\}.\]
\end{definition}
If $D$ is completely controllable (i.e., any two points in $M$ can be connected by a $D$-horizontal curve), then the groups
$\Hol^{\omega,D}(p)$ with $p \in P$, coincide up to conjugation.
If $\omega$ and $\widetilde \omega$ are two connections on~$P$, the sets $\mathscr{L}^{\omega,D}(p)$ and $\mathscr{L}^{\widetilde \omega,D}(p)$ may coincide for every $p \in P$ even if the connections are different. Since in this case these connections also have the same horizontal holonomy group with respect to $D$, we introduce the following equivalence relation on connections of $P$.

\begin{definition}
Let $\pi\colon P \to M$ be a principal $G$-bundle and $D$ a subbundle of~$TM$.
Two connection forms $\omega$ and $\widetilde \omega$ are called $D$-equivalent if
\[\omega(v) = \widetilde \omega(v), \quad \text{ for any } v \in TP \text{ satisfying } \pi_* v \in D,\]
and we write $[\omega]_D$ for the equivalence class of the connection form $\omega$.
\end{definition}
Any two $D$-equivalent connection forms $\omega$ and $\widetilde \omega$ have the same horizontal lifts to $P$ of $D$-horizontal curves and hence $\Hol^{\omega,D}(p) = \Hol^{\widetilde \omega,D}(p)$ for every $p \in P$.

\begin{remark} \label{re:connected}
\begin{enumerate}[\rm (a)]
\item Rather than introducing the above equivalence classes, we could have considered \emph{partial connections} such as in \cite{FGR97}: given a principal $G$-bundle $\pi\colon P \to M$ and a subbundle $D$ of $TM$, a (principal) partial connection over~$D$ is a subbundle~$\mathcal{E}$, invariant under the action of~$G$, such that~$\pi_* $ maps~$\mathcal{E}$ on~$D$ bijectively on every fiber. For every equivalence class $[\omega]_D$, we obtain a partial connection by $\mathcal{E} \colon= (\pi_*)^{-1}(D) \cap \ker \omega$. Conversely, following the argument of \cite[Theorem~2.1]{KoNo63}, one proves that any partial connection can be extended to a full connection on~$\pi$. Hence, there is a one-to-one correspondence between partial connections and $D$-equivalence classes. For us, the language of $D$-equivalence classes will be more convenient.
\item  For any connection form $\omega$, the identity component of $\Hol^\omega(p)$ is obtained by horizontally lifting all contractible loops based at $\pi(p)$. For horizontal holonomy, we have a similar description. For any  loop $\gamma\colon[0,1] \to M$ based in $x$, we say that it is \emph{$D$-horizontally contractible} if $\gamma$ is a $D$-horizontal loop and if there exists a homotopy $[0,1] \times [0,1] \to M$, $(s,t) \mapsto \gamma^s(t)$ such that $\gamma^0(t) = x$, $\gamma^1(t) = \gamma(t)$, $\gamma^s(0) = \gamma^s(1) = x$ and $t \mapsto \gamma^s(t)$ is a $D$-horizontal curve for any~$s \in [0,1]$. The identity component of $\Hol^{\omega,D}(p)$ is obtained by horizontally lifting $D$-horizontally contractible loops. If $D$ is bracket-generating, (i.e., if~$TM$ is spanned by vector fields with values in~$D$ and their iterated brackets)
then a~$D$-horizontal loop is $D$-horizontally contractible if and only if it is contractible (see \cite{Sarychev1990} and \cite[Theorem 1]{Ge1993}). As a consequence, the identity component of $\Hol^{\omega,D}(p)$ is obtained by horizontally lifting contractible $D$-horizontal loops. On the other hand, such a property may not hold when $D$ is not bracket-generating, as the following example shows. Consider $\mathbb{R}^4$ with coordinates $(x,y,z,w)$ and let $D$ be the span of $\frac{\partial}{\partial x}$ and $\frac{\partial}{\partial y} + x (w\frac{\partial}{\partial z} - z\frac{\partial}{\partial w})$. Fix a point $(x,y,z,w)$ with $(z,w)\neq (0,0)$. Then all $D$-horizontal loops starting from this point are contained in a manifold diffeomorphic to $\mathbb{R}^2 \times S^1$ and some of them are contractible but not $D$-horizontally contractible.

\item The definition of horizontal holonomy does not change if we define $\mathscr{L}^{\omega,D}(p)$ to be the collection of horizontal lifts of all loops based in $\pi(p)$ that are both $D$-horizontal and smooth, see \cite[Theorem 2.3]{bel96} and \cite[last sentence]{Grasse1990}.

\end{enumerate}
\end{remark}

\subsection{Equiregular subbundles and selectors} \label{sec:Equiregular}
In this paragraph,
we assume that the subbundle $D$  of $TM$ is \emph{equiregular} and \emph{bracket-generating} and the corresponding definitions are given next.
\begin{definition}\label{def0}
Let $D$ be a subbundle of the tangent bundle $TM$ of a connected manifold $M$.
\begin{enumerate}[$\bullet$]
\item We say that $D$ is \emph{equiregular} of step $r$ if there exist a flag of subbundles of $TM$
\begin{equation} \label{flag} 0 = D^0 \subsetneq D^1 = D \subsetneq D^2 \subsetneq \cdots \subsetneq D^r ,\end{equation}
such that $D^r$ is an integrable subbundle and such that $D^k$ is the span of vectors fields with values in $D$ and their iterated brackets of order less than~$k-1$ for any~$2 \leq k \leq r$.
\item We say that $D$ is \emph{bracket-generating} if $TM$ is spanned by vector fields with values in $D$ and their iterated brackets.
\item We say that $D$ is \emph{completely controllable} if any two points in $M$ can be connected by a $D$-horizontal curve.
\end{enumerate}
\end{definition}
From the above definitions, an equiregular subbundle $D$ is bracket-generating if and only if $D^r = TM$. Furthermore, $D$ is completely controllable if it is bracket-generating \cite{Cho39,Ras38}. We give some examples to illustrate the above definitions.

\begin{example}
\begin{enumerate}[\rm (a)]
\item A subbundle is integrable if and only if it is equiregular of step~$1$.
\item Consider $\mathbb{R}^3$ with coordinates $(x,y,z)$. Let $\phi\colon \mathbb{R} \to \mathbb{R}$ be a real valued smooth function and define $D$ as the span of $\frac{\partial}{\partial x}$ and $\frac{\partial}{\partial y} + \phi(x) \frac{\partial}{\partial z}$.
\begin{enumerate}[\rm (i)]
\item If $\phi(x) = x$, then $D$ is bracket-generating and equiregular of step~$2$.
\item If $\phi(x) = x^2$, then $D$ is bracket-generating, but not equiregular, since $\spn \{ X, Y, [X,Y]\}$ is not of constant rank, and so $D^2$ is not well-defined.
\item If $\phi(0) = 0$ and $\phi(x) = e^{-1/x^2}$ for $x \neq 0$, then $D$ is completely controllable but neither bracket-generating nor equiregular.
\end{enumerate}
\item Consider $\mathbb{R}^4$ with coordinates $(x,y,z,w)$ and let $D$ be the span of $\frac{\partial}{\partial x}$ and $\frac{\partial}{\partial y} + x \frac{\partial}{\partial z}$. Then $D$ is equiregular of step~$2$, but not bracket-generating.
\item If $D$ has rank greater or equal to~$2$, a generic choice of a subbundle $D$ of $TM$ is bracket generating in the sense of \cite[Proposition~2]{Mon93}.

\end{enumerate}
\end{example}
We consider here the case when $D$ is equiregular (of step $r\in \mathbb{N}$) and   bracket-generating. The remaining cases are addressed in Sections~\ref{sec:NotBG} and \ref{sec:GeneralSB}.

For $0 \leq k \leq r$, we use $\Ann(D^k) \subseteq T^*M$ to denote the subbundle of $T^*M$ consisting of the covectors that vanish on $D^k$. In particular, $\Ann(D^r)$ reduces to the zero section of $T^*M$. The following definition introduces the main technical tool in order to formulate our results on horizontal holonomy groups.
\begin{definition} \label{def:Selector}
Let $D$ be a bracket-generating, equiregular subbundle of step~$r$ with the corresponding flag given as in \eqref{flag}. We say that a two-vector-valued one-form $\chi \in \Gamma(T^*M \otimes \bigwedge^2 TM)$ is a \emph{selector of $D$} if it satisfies the following two assumptions.
\begin{enumerate}[\rm (I)]
\item For every $0 \leq k \leq r-1$, $\chi(D^{k+1}) \subseteq \bigwedge^2 D^k \subseteq \bigwedge^2 TM.$
\item For every $1 \leq k \leq r-1$ and one-form $\alpha$ with values in $\Ann(D^k)$ and every vector $v \in D^{k+1}$, we have
\[\alpha(v) = - d\alpha(\chi(v)).\]
\end{enumerate}
\end{definition}
Taking $k=0$ in Item (I) yields that any selector must satisfy $\chi(D) = 0$. If $\chi$ is a selector, we use $\iota_{\chi}$ to denotes its transpose or contraction operator, i.e., for every vector $v$ and two-covector $\eta$ one has $(\iota_{\chi} \eta)(v) \colon= \eta(\chi(v))$.

The next lemma provides basic properties associated with selectors.
\begin{lemma} \label{lemma:Selector}
\begin{enumerate}[\rm (a)]
\item A bracket-generating equiregular subbundle $D$ admits at least one selector.
\item The set of selectors of $D$ in $\Gamma(T^*M \otimes \bigwedge^2 TM)$ is an affine subspace. In fact,  if $\chi^0$ is a selector of $D$, then $\{ \chi - \chi^0 \, : \, \chi \text{ is a selector of }D \}$ is a $C^\infty(M)$-module.
\item Let $\beta$ and $\eta$ be a one-form and a two-form on $M$, respectively. Let $\chi$ be a selector of $D$. Then there exists a unique one-form $\alpha$ satisfying the system of equations
\begin{equation} \label{extBeta}
\alpha|_D = \beta|_D, \qquad \iota_\chi d \alpha = \iota_\chi \eta,
\end{equation}
and $\alpha$ is given by
\begin{equation} \label{solBeta}
\alpha = (\id + \iota_\chi d)^{r-1} \beta  - \iota_\chi \sum_{j=0}^{r-2} \binom{r-1}{j+1} (d \iota_\chi)^j \eta.
\end{equation}
\item Let $d_\chi\colon \Gamma(T^*M)\rightarrow  \Gamma(\bigwedge^2 TM)$
be defined by $d_\chi \colon= d(\id + \iota_\chi d)^{r-1}$. Then, for every one form $\beta$, $d_\chi\beta$ only depends on $\beta|_D$ and $d_\chi\beta=0$ if and only if there exists a one-form $\tilde{\beta}$ such that
\[\tilde{\beta}|_D = \beta|_D, \qquad d\tilde{\beta} = 0.\]
\end{enumerate}
\end{lemma}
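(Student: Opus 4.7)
The plan is to handle the four parts in order: (a) and (b) are short linearity/gluing arguments, (c) is the core technical step (uniqueness by induction along the flag~\eqref{flag}, existence by an iterative scheme that telescopes to~\eqref{solBeta}), and (d) follows directly from (c).

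For (a), I would build a selector locally and patch with a partition of unity. In a trivializing neighborhood, fix a frame adapted to~\eqref{flag}; for each frame vector $v$ of $D^{k+1}$, bracket-generation lets us write $v \equiv \sum_i [X_i, Y_i] \pmod{D^k}$ with $X_i, Y_i \in \Gamma(D^k)$, and I set $\chi(v) := \sum_i X_i \wedge Y_i$. Property~(I) is then built in. For (II), Cartan's formula gives $d\alpha(X_i, Y_i) = -\alpha([X_i, Y_i])$ when $\alpha \in \Gamma(\Ann(D^k))$ and $X_i, Y_i \in \Gamma(D^k)$, so $d\alpha(\chi(v)) = -\alpha(\sum_i [X_i, Y_i]) = -\alpha(v)$ because $v - \sum_i [X_i, Y_i] \in D^k \subseteq \ker\alpha$. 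Patching is safe because (I) and (II) are $\mathbb{R}$-affine in $\chi$. Part (b) records this formally: both defining conditions are pointwise affine in $\chi$, so convex combinations of selectors are selectors, and the difference space is cut out by the homogeneous conditions $(\chi - \chi^0)(D^{k+1}) \subseteq \bigwedge^2 D^k$ and $d\alpha((\chi-\chi^0)(v)) = 0$, both of which are $C^\infty(M)$-linear in the pointwise value of $\chi - \chi^0$.

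For (c), uniqueness proceeds by induction along the flag: if $\gamma := \alpha_1 - \alpha_2$ is the difference of two solutions, then $\gamma|_D = 0$ and $\iota_\chi d\gamma = 0$, and the inductive hypothesis $\gamma \in \Gamma(\Ann(D^k))$ together with property~(II) gives $\gamma(v) = -d\gamma(\chi(v)) = -(\iota_\chi d\gamma)(v) = 0$ for $v \in D^{k+1}$, so $\gamma \in \Gamma(\Ann(D^r)) = 0$. For existence, I iterate $\alpha^{(0)} := \beta$, $\alpha^{(k+1)} := \alpha^{(k)} + \iota_\chi d\alpha^{(k)} - \iota_\chi \eta$. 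Each step preserves $\alpha^{(k)}|_D = \beta|_D$ because $\iota_\chi(\cdot)|_D = 0$. The defect $\gamma_k := \iota_\chi d\alpha^{(k)} - \iota_\chi \eta$ obeys the telescoping recursion $\gamma_{k+1} = \gamma_k + \iota_\chi d\gamma_k$, and property~(II) at level $\ell$ is precisely the statement that $\xi + \iota_\chi d\xi \in \Gamma(\Ann(D^{\ell+1}))$ whenever $\xi \in \Gamma(\Ann(D^\ell))$. Since $\gamma_0 \in \Gamma(\Ann(D))$ is automatic, induction gives $\gamma_k \in \Gamma(\Ann(D^{k+1}))$, hence $\gamma_{r-1} = 0$, and $\alpha^{(r-1)}$ solves the system. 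Expanding the recursion by repeated substitution yields the closed formula~\eqref{solBeta}, with the binomial coefficients arising from the combinatorics of how each $\iota_\chi \eta$ correction propagates through later applications of $\iota_\chi d$.

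Part (d) is immediate from (c) with $\eta = 0$: the one-form $\tilde\beta := (\id + \iota_\chi d)^{r-1} \beta$ is the unique extension of $\beta|_D$ satisfying $\iota_\chi d\tilde\beta = 0$, so $d_\chi\beta = d\tilde\beta$ depends only on $\beta|_D$. If $d_\chi\beta = 0$ then $\tilde\beta$ itself is a closed extension of $\beta|_D$, and conversely any closed $\tilde\beta'$ with $\tilde\beta'|_D = \beta|_D$ trivially satisfies $\iota_\chi d\tilde\beta' = 0$, so by uniqueness $\tilde\beta' = \tilde\beta$ and $d_\chi\beta = 0$. The main obstacle throughout is the identification in (c) of the iterative scheme and the telescoping recursion for its defect $\gamma_k$ that is exactly calibrated to climb one rung of the flag under property~(II) at each step; once this is spotted, everything else is bookkeeping.
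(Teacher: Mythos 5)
Your proposal is correct and follows essentially the same route as the paper: the engine in both is the observation that property (II) says $(\id+\iota_\chi d)$ sends forms vanishing on $D^k$ to forms vanishing on $D^{k+1}$, iterated $r-1$ times up the flag, with uniqueness in (c) by the same induction and (d) read off from (c) with $\eta=0$. The only cosmetic differences are in (a), where you patch local bracket-representative constructions with a partition of unity instead of the paper's global metric-orthogonal-complement construction, and in (c), where your single recursion with telescoping defect $\gamma_{k+1}=\gamma_k+\iota_\chi d\gamma_k$ merges the paper's two cases ($\beta=0$ and $\eta=0$) and produces the binomial coefficients in \eqref{solBeta} via the hockey-stick identity rather than by expanding $(\id+\iota_\chi d)^{r-1}$ directly.
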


We provide an example of selectors before giving the argument of the above lemma.
\begin{example} \label{ex:Contact}
For $n \geq 1$, consider $\mathbb{R}^{2n+1}$ with coordinates $(x_1, \dots, x_n, y_1, \dots, y_n, z)$. For every $1\leq j\leq n$, define $X_j = \frac{\partial}{\partial x_i}$ and $Y_j = \frac{\partial}{\partial y_i} + x_i \frac{\partial}{\partial z}$ and let $D$ be the span of these vector fields. The subbundle $D$ is then bracket-generating and equiregular of step~$2$. Furthermore, for every $1\leq k\leq n$, the two-vector-valued one-forms $\chi^k$ defined, for every $1\leq j\leq n$, by
\[\chi^k (X_j) = 0, \quad \chi^k(Y_j)=0 \quad \text{and} \quad \chi^k(\tfrac{\partial}{\partial z}) = X_k \wedge Y_k,\]
are selectors of~$D$. The collection of all selectors of~$D$ is
\[
 \chi^1 + \mathrm{span}_{C^\infty(M)} \{ \chi^j- \chi^1, \, 2\leq j\leq n\}=
 \left\{ \sum_{k=1}^n f^k \chi^k \, : \, f^k \in C^\infty(M), \, \sum_{k=1}^n f^k \equiv 1 \right\}.
 \]
\end{example}

\begin{remark}
The reason for our choice of the term ``selector'' is the following. Let $Z$ be a vector field with values in $D^{k+1}$ for some $k=0, \dots, r-1$. By definition, we can write $Z$ using vector fields with values in $D^k$ and first order Lie brackets of vector fields with values in the same subbundle. However, such a decomposition is not unique. The idea is that a selector gives us a way of selecting one of these representations. That is, if $\chi(Z) = \sum_{i=1}^l X_i \wedge Y_i,$ Items (I) and (II) in Definition~\ref{def:Selector} ensure that we can write
\[Z = \sum_{i=1}^l [X_i, Y_i] + Z_2,\]
where the vector fields $X_i, Y_i$ and $Z_2 = Z - \sum_{i=1}^k [X_i, Y_i]$ take values in $D^k$.
\end{remark}

\begin{proof}[Proof of Lemma~\ref{lemma:Selector}]
\begin{enumerate}[\rm (a)]
\item Endow $M$ with a Riemannian metric~$\tensorg$. Let $E^k$ denote the $\tensorg$-orthogonal complement of $D^{k-1}$ in $D^k$ for $1 \leq k \leq r$. In other words
\[D = E^1, \qquad D^2 = E^1 \oplus_\perp E^2, \qquad \dots, \qquad D^r = TM = E^1 \oplus_\perp \cdots \oplus_\perp E^r. \]
For $1 \leq k \leq r$, let $\pr_{E^k}$ denote the $\tensorg$-orthogonal projection onto $E^k$ and set $\pr_{E^{r+1}}$ to be equal to the zero-map. We next define a vector-valued two-form $\Phi\colon \bigwedge^2 TM \to TM$ as follows. Let $X$ and $Y$ be two vector fields with values in $E^i$ and $E^j$ respectively with $i \leq j$ and let $x \in M$. We write $v = X|_x$ and $w =Y|_x$. We set
\[\Phi(v ,w) = \left\{ \begin{array}{ll}
0 & \text{if } i \geq 2, \\
\pr_{E^{j+1}} [X,Y]|_x  & \text{if } i =1. \end{array} \right.\]
Since $\pr_{E^{j+1}} [X,Y]|_x$ does not depend on the choices of sections $X$ and $Y$ of respectively $E^1$ and $E^j$, the vector $\Phi(v,w)$ is well defined. The image of $\Phi$ is $E^2 \oplus \cdots \oplus E^r$. Define $\chi\colon TM \to \bigwedge^2 TM$ such that $\chi$ vanishes on $E^1$ and for any $w \in E^k$, $2 \leq k \leq r$, $\chi(w)$ is the unique element in $\bigwedge^2 TM$ satisfying
\[\Phi(\chi(w)) = w, \qquad \chi(w) \in (\ker \Phi)^\perp,\]
with the latter denoting the $\tensorg$-orthogonal complement of the kernel of $\Phi$ in $\bigwedge^2 TM$. From this definition, Item (I) follows readily. Furthermore, let~$X$ and~$Y$ be two arbitrary vector fields with values in $E^1$ and $E^j$ respectively, with $j <r$. If $Z = \Phi(X,Y)$ and $\alpha$ is a one-form vanishing on $D^j = E^1 \oplus \cdots \oplus E^j$, then
\[- d\alpha(\chi(Z)) = - d\alpha(X,Y) = \alpha([X,Y]) = \alpha(\Phi(X,Y)) = \alpha(Z),\]
so (II) is satisfied as well.
\item If $\chi_1$ and $\chi_2$ are two selectors of $D$, then from Definition~\ref{def:Selector}, we have that $\xi = \chi_1 - \chi_2$ is a map that satisfies $\xi(D^{k+1}) \subseteq \bigwedge^2 D^k, k=0, \dots, r-1$ and for any $\alpha \in \Gamma(\Ann D^k)$ and $w \in D^{k+1}$, $k =1, \dots, r-1$, we have
\[d \alpha(\xi(w)) = 0.\]
Clearly, the space of all such $\xi$ is closed under addition and multiplication by scalars or functions.
\item
We start by showing uniqueness of a solution of \eqref{extBeta}. Thanks to the linearity of the equations of \eqref{extBeta}, it amounts to prove that $\alpha=0$ is the unique solution to \eqref{extBeta} when $\beta = 0$ and $\eta =0$ . Such an $\alpha$ must take values in $\Ann(D)$, meaning that, for every $w \in D^2$, we have $d\alpha(\chi(w)) = 0 = - \alpha(w)$, and so $\alpha$ must vanish on $D^2$ as well. By iterating this reasoning, it follows that $\alpha = 0$. 

As regards the existence of a solution of \eqref{extBeta}, the linearity of the equations of \eqref{extBeta}  implies that it is enough to consider two cases, namely (i) $\beta = 0$ and (ii) $\eta=0$. We start with Case (i). Since $\iota_\chi \eta$ vanishes on $D$, it follows that $(\id + \iota_\chi d) \iota_\chi \eta$ vanishes on $D^2$ by Definition~\ref{def:Selector}~(II). Iterating this argument, we obtain
\[(\id + \iota_\chi d)^{r-1} \iota_\chi \eta = \sum_{j=0}^{r-1} \binom{r-1}{j} (\iota_\chi d)^j \iota_\chi \eta = 0.\]
Hence, $\iota_\chi \eta = - \iota_\chi d \sum_{j=0}^{r-2} \binom{r-1}{j+1} (\iota_\chi d)^j \iota_\chi \eta$ and so we can take \[\alpha = -  \sum_{j=0}^{r-2} \binom{r-1}{j+1} (\iota_\chi d)^j \iota_\chi \eta = -  \iota_\chi \sum_{j=0}^{r-2} \binom{r-1}{j+1} (d \iota_\chi)^j \eta \] a solution to \eqref{extBeta}. Note that $\alpha$ vanishes on $D$ as required, since $\chi$ vanish on~$D$.

We next turn to Case (ii), i.e., we assume that $\eta = 0$ in \eqref{extBeta}. Define $\alpha^1 = \beta$ and $\alpha^{k+1} = (\id + \iota_\chi d) \alpha^k$ for $k=1, \dots, r-1$. We show by induction on $k\geq 1$ that $\alpha^k(v) = \beta(v)$ for  $v \in D$ and $(\iota_\chi d \alpha^k) (w) = 0$ for $w \in D^k$. The conclusion trivially holds for $k =1$ since $\chi$ vanishes on $D^1$. Furthermore, for every $v \in D$, one has $\alpha^{k+1}(v) = \alpha^k(v) + \iota_\chi d \alpha^k(v) = \alpha^k(v).$ We complete the induction step by observing that
\[\iota_\chi d \alpha^{k+1} = (\id+ \iota_\chi d) \iota_\chi d\alpha^k,\]
vanishes on $D^{k+1}$ if $\iota_\chi d \alpha^k$ vanishes on $D^k$. The desired solution $\alpha$ is simply~$\alpha^r$.
\item If $\beta_1|_D = \beta_2|_D$, then $\beta_1 - \beta_2$ is a one-form with values in $\Ann(D)$ and one has that \[(\id + \iota_\chi d)^{r-1} (\beta_1 - \beta_2) = 0.\]
Hence $d_\chi \beta_1 = d_\chi \beta_2$.

Consider a closed one form $\tilde{\beta}$ and a one-form $\beta$ such that $ \beta |_D=\tilde{\beta}|_D$.
Then $d_\chi \beta = d_\chi \tilde{\beta} = (\id + d \iota_{\chi})^{r-1} d\tilde{\beta} = 0$. Conversely, if $\beta$ is a one-form such that $d_\chi \beta = 0$, then $\tilde{\beta}: = (\id + \iota_\chi d)^{r-1} \beta$ clearly satisfies the two equations $d \tilde{\beta} = 0$ and $\tilde{\beta}|_D = \beta |_D$.
\end{enumerate}
\end{proof}
We next extend the conclusion of Lemma~\ref{lemma:Selector}~(c) to the context of forms taking values in a vector bundle. For that purpose, consider a vector bundle $E\to M$ with an affine connection $\nabla$. \emph{The exterior covariant derivative} $d^\nabla$  is defined as
follows: for every $k$-form $\eta\in\Gamma(\bigwedge^k T^*M \otimes E)$ with $k \geq 0$,
\begin{enumerate}[\rm (a)]
\item If $k = 0$, then $d^\nabla \eta = \nabla_{\centerdot} \eta$,
\item If $\beta$ is a real-valued form, then $d^\nabla(\eta \wedge \beta) = (d^\nabla \eta) \wedge \beta + (-1)^k \eta \wedge d\beta$.
\end{enumerate}
Then, the conclusion of Lemma~\ref{lemma:Selector}~(c) still holds true for forms taking now values in any vector space if one replaces the exterior derivative $d$ with the exterior covariant derivative $d^\nabla$. Indeed, if $\alpha$ is an $E$-valued one-form vanishing on $D^k$, then for any $w \in D^{k+1}$ and selector $\chi$, we have
\[d^\nabla \alpha(\chi(w)) = - \alpha(w).\]
Hence, we can use the same argument as in the proof of Lemma~\ref{lemma:Selector}~(c) to obtain a formula for the unique solution $\alpha$ to the equation $\alpha|_D = \beta|_D$ and $\iota_\chi d^\nabla \alpha = \iota_\chi \eta$ for given $\beta$ and $\eta$. In fact, we can get the following more general result by using the same approach.
\begin{corollary} \label{cor:Covariant}
Let $\chi$ be a selector of $D$, $\Pi\colon E \to M$ a vector bundle over~$M$ and $\beta$, $\eta$ respectively a one-form and a two form taking values in $E$. Consider an operator $L\colon \Gamma(T^*M \otimes E) \to \Gamma(\bigwedge^2 T^*M \otimes E)$
such that, for $1\leq k\leq r-1$ and $\alpha \in \Gamma(T^*M \otimes E)$ vanishing on $D^k$, one has that $(\id + \iota_\chi L) \alpha$ vanishes on $D^{k+1}$. Then the unique solution $\alpha$ to the system of equations $ \alpha|_D = \beta|_D$, $\iota_\chi L \alpha = \iota_\chi \eta$
is given by
\[
\alpha = (\id + \iota_\chi L)^{r-1} \beta  - \iota_\chi \sum_{j=0}^{r-2} \binom{r-1}{j+1} (L \iota_\chi)^j \eta.
\]
Furthermore, if we define $L_\chi  \colon= L(\id + \iota_\chi L)^{r-1}$, then $L_\chi \alpha$ only depends on $\alpha|_D$ and it vanishes
$L_\chi \alpha = 0$ if and only if there exists some one-form $\beta$ with
\[\alpha|_D = \beta|_D , \qquad L \beta = 0.\]
\end{corollary}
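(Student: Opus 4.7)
The plan is to imitate the proof of Lemma~\ref{lemma:Selector}(c)--(d) almost verbatim, since the only properties of $d$ used there were (i) that $\iota_\chi d$ raises the flag order by one on forms vanishing on $D^k$ and (ii) that $\iota_\chi$ kills forms evaluated on $D$. The hypothesis on $L$ is precisely a repackaging of (i), and (ii) is built into any selector. So I expect the proof to be mostly bookkeeping, the only mild novelty being to check that the inductive step-raising mechanism survives the replacement $d \rightsquigarrow L$.

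For \emph{uniqueness}, by linearity it suffices to take $\beta=0$ and $\eta=0$ and show $\alpha=0$. The assumption gives $\alpha|_D = 0$ and $\iota_\chi L \alpha = 0$, so $(\id + \iota_\chi L)\alpha = \alpha$, and by the hypothesis applied to $k=1$ the left-hand side vanishes on $D^2$. Iterating this observation for $k=2,\dots,r-1$ yields that $\alpha$ vanishes on $D^r=TM$, which follows from bracket-generation. For \emph{existence}, again by linearity I would split into two cases. With $\eta=0$, define $\alpha^1 = \beta$ and $\alpha^{k+1} = (\id + \iota_\chi L)\alpha^k$, and verify by induction that $\alpha^k|_D = \beta|_D$ (using that $\iota_\chi L \alpha^k$ vanishes on $D\subseteq D^k$) and that $\iota_\chi L \alpha^k$ vanishes on $D^k$ (using the hypothesis); then $\alpha=\alpha^r$ satisfies $\iota_\chi L\alpha = 0$ since $\iota_\chi L\alpha^r$ vanishes on $D^r=TM$. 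With $\beta=0$, observe that $\iota_\chi \eta$ vanishes on $D$ because $\chi|_D=0$, so the hypothesis applied $r-1$ times gives $(\id + \iota_\chi L)^{r-1}\iota_\chi \eta = 0$; expanding the binomial isolates $\iota_\chi\eta$ on one side and yields
\[
\iota_\chi\eta = -\iota_\chi L\, \iota_\chi \sum_{j=0}^{r-2}\binom{r-1}{j+1}(L\iota_\chi)^j \eta,
\]
so that $\alpha := -\iota_\chi \sum_{j=0}^{r-2}\binom{r-1}{j+1}(L\iota_\chi)^j \eta$ solves $\iota_\chi L\alpha = \iota_\chi\eta$ and vanishes on $D$. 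Adding the two cases gives the displayed formula.

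For the final statement concerning $L_\chi$, I would first show that $L_\chi \alpha$ depends only on $\alpha|_D$: if $\alpha_1|_D = \alpha_2|_D$, then $\alpha_1-\alpha_2$ vanishes on $D$ and the hypothesis iterated $r-1$ times gives $(\id + \iota_\chi L)^{r-1}(\alpha_1-\alpha_2) = 0$, so $L_\chi(\alpha_1-\alpha_2)=0$. For the equivalence, the backward direction follows from the identity $L(\id + \iota_\chi L)^{r-1}\beta = \sum_{j=0}^{r-1}\binom{r-1}{j}(L\iota_\chi)^j L\beta$, each of whose terms vanishes once $L\beta=0$. For the forward direction, given $L_\chi \alpha = 0$, I would set $\tilde\beta := (\id + \iota_\chi L)^{r-1}\alpha$; then $L\tilde\beta = L_\chi\alpha = 0$ by definition, and $\tilde\beta|_D = \alpha|_D$ because every application of $\iota_\chi L$ yields a form vanishing on $D$ (again using $\chi|_D=0$).

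The main (mild) obstacle is being scrupulous about the inductive step: one must ensure that at each iteration the hypothesis on $L$ is applied to a form that actually vanishes on the current $D^k$, which is where the operator $(\id + \iota_\chi L)$ rather than $\iota_\chi L$ alone is essential; otherwise everything is a direct transcription of the proof of Lemma~\ref{lemma:Selector}.
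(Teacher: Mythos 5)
Your proposal is correct and follows essentially the same route as the paper, which itself only says that Corollary~\ref{cor:Covariant} is obtained ``by using the same approach'' as Lemma~\ref{lemma:Selector}(c)--(d); you have simply written out that transcription of $d \rightsquigarrow L$ in full, and all the steps (the commutation $\iota_\chi L(\id+\iota_\chi L)=(\id+\iota_\chi L)\iota_\chi L$, the identity $(\iota_\chi L)^j\iota_\chi=\iota_\chi(L\iota_\chi)^j$, and the careful application of the step-raising hypothesis only to forms already known to vanish on the current $D^k$) check out.
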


\subsection{Computation of horizontal holonomy groups} \label{sec:Computing}
A central result for the characterization of the holonomy of a connection $\omega$ in principal bundles is the Ambrose-Singer Theorem, which essentially says that the Lie algebra of $\Hol^\omega$ can be computed from the \emph{curvature form} $\Omega$ of $\omega$, see \cite{AmSi53} and \cite[Theorem~8.1]{KoNo63}. Recall that in the case of infinitesimal holonomy or in the analytic framework, the Ambrose-Singer Theorem admits a sharpened form established by Ozeki \cite{Oze56}. In this section, we provide versions of Ambrose-Singer and Ozeki Theorems describing the horizontal holonomy group of equiregular subbundles, and which rely on an adapted curvature form that we introduce below.

For that purpose, we consider the following notations.
Let $\pi\colon P \to M$ be a principal $G$-bundle where $\mathfrak{g}$ denotes the Lie algebra of $G$.
We define a bracket of $\mathfrak{g}$-valued forms on $P$ as next: if $\alpha$ and $\beta$ are real valued forms and $A, B \in \mathfrak{g}$, then
\[[\alpha \otimes A , \beta \otimes B]: = (\alpha \wedge \beta)\otimes [A,B] .\]
In particular, if $\eta$ is a $\mathfrak{g}$-valued one-form, then $[\eta, \eta](v,w) = 2 [\eta(v), \eta(w)].$

A function $f$ (a form $\eta$ respectively) on $P$ with values in $\mathfrak{g}$ is called \emph{$G$-equivariant} if it satisfies
\[f(p \cdot a) = \Ad(a^{-1}) f(p)\quad (\eta(v_1 \cdot a, \cdots, v_k \cdot a) = \Ad(a^{-1}) \eta(v_1, \dots, v_k) \text{ respectively}).
\]
Consider the vector bundle $\Ad P \to M$ defined as the quotient $(P \times \mathfrak{g})/G$ with respect to the right action of $G$ given by $(p,A) \cdot a \colon= (p \cdot a, \Ad(a^{-1}) A).$ Any section $s \in \Gamma(\Ad P)$ defines a unique $G$-equivariant map $s^{\wedge}\colon P \to \mathfrak{g}$ such that $s(\pi(p)) = (p, s^\wedge(p))/G$. In that way, one can associate with a connection form $\omega$ on $P$ an affine connection $\nabla^\omega$ on $\Ad P$ by letting $\nabla_X^\omega s$ be the section of $\Ad P$ corresponding to the $G$-equivariant function $ds^\wedge(hX)$. Here, $X$ is a vector field on $M$ and $hX$ denotes its $\omega$-horizontal lift defined by $hX|_p = h_p X|_{\pi(p)}$.

We have a similar identification between $\Ad P$-valued forms and $G$-equivariant horizontal forms. Write $\calH= \ker \omega$ and $\calV = \ker \pi_*$. We say that a form on $P$ is \emph{horizontal} if it vanishes on $\calV$. Any $\Ad P$-valued form $\eta \in \Gamma(\bigwedge^k T^*M \otimes \Ad P)$ corresponds uniquely to a horizontal $G$-equivariant form $\eta^\wedge$ by $\eta(v_1, \dots v_k) = (p, \eta^\wedge(h_p v_1, \dots, h_p v_k))/G$ where $v_j \in T_x M, j=1,\dots, k$ and $p \in P_x$. From this definition, it follows that $d^{\nabla^\omega}\eta$ corresponds to $\pr_{\calH}^* d\eta^\wedge$. Moreover, if $\alpha$ and $\beta$ are $\Ad(P)$-valued forms, we will use $[\alpha, \beta]$ for the $\Ad(P)$-valued form corresponding to~$[\alpha^\wedge, \beta^\wedge]$.

A special $\Ad P$-valued form is the curvature form $\Omega$ of the connection $\omega$, corresponding to the equivariant horizontal two-form $\Omega^\wedge = \pr_{\calH}^* d\omega.$ Note that $\Omega^\wedge(v,w) = d\omega(v,w) + \frac{1}{2} [\omega, \omega].$
Moreover, for every vector fields $X,Y$ on $M$, one has that
\begin{equation}\label{curF0}
[hX, hY] - h[X,Y]  = -\Omega^\wedge(hX,hY).
\end{equation}

The next proposition describes the horizontal holonomy group of a connection $\omega$ with respect to a subbundle $D$ as the holonomy of an adapted connection.
\begin{proposition} \label{th:UsualHol}
Let $\pi\colon P \to M$ be a principal bundle over $M$. Let $D$ be an equiregular and bracket-generating subbundle of $TM$.
\begin{enumerate}[\rm (a)]
\item Let $\omega$ be any connection form on $\pi$ with corresponding curvature form $\Omega$. If for some selector $\chi$ of $D$, we have
\begin{equation} \label{CurvChiZero} \iota_\chi \Omega = 0,\end{equation}
then $\Hol^{\omega,D}(p) = \Hol^\omega(p)$ for any $p \in P$.
\item For any connection form $\omega$ and selector $\chi$ of $D$, there exists a unique connection $\widetilde \omega \in [\omega]_D$ with curvature satisfying \eqref{CurvChiZero}.
\end{enumerate}
As a consequence, for any connection form $\omega$, there exists a connection form $\widetilde \omega$ such that
\[\Hol^{ \omega,D}(p) = \Hol^{\widetilde \omega,D}(p) = \Hol^{\widetilde \omega}(p), \qquad \text{for any } p \in P.\]
In particular, $\Hol^{\omega,D}(p)$ is a Lie group.
\end{proposition}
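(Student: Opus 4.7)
The final identity follows directly from (a) and (b): given any connection form $\omega$, apply (b) to obtain $\widetilde\omega \in [\omega]_D$ with $\iota_\chi\widetilde\Omega = 0$; then $\Hol^{\omega,D}(p) = \Hol^{\widetilde\omega,D}(p)$ by $D$-equivalence, while (a) applied to $\widetilde\omega$ gives $\Hol^{\widetilde\omega,D}(p) = \Hol^{\widetilde\omega}(p)$, which is a Lie group by the classical holonomy theorem for principal connections. The substance of the proof is therefore in (a) and (b).

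For (b), I would parametrize $\widetilde\omega = \omega + \alpha^\wedge$, where $\alpha \in \Gamma(T^*M \otimes \Ad P)$ satisfies $\alpha|_D = 0$, using the identification between $\Ad P$-valued forms on $M$ and horizontal $G$-equivariant $\mathfrak{g}$-valued forms on $P$; the constraint $\alpha|_D = 0$ encodes $D$-equivalence. A direct computation from the structure equation yields
\[
\widetilde\Omega = \Omega + d^{\nabla^\omega}\alpha + \tfrac{1}{2}[\alpha,\alpha],
\]
so $\iota_\chi\widetilde\Omega = 0$ becomes $\iota_\chi d^{\nabla^\omega}\alpha = -\iota_\chi\Omega - \tfrac{1}{2}\iota_\chi[\alpha,\alpha]$. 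Since $\chi(D^{k+1}) \subseteq \bigwedge^2 D^k$, the right-hand side evaluated on $D^{k+1}$ depends only on $\alpha|_{D^k}$, so I would solve the equation layer by layer through the flag, invoking Corollary~\ref{cor:Covariant} with $L = d^{\nabla^\omega}$ at each level to extend $\alpha$ from $D^k$ to $D^{k+1}$. Uniqueness of $\widetilde\omega$ follows by applying the uniqueness part of the same corollary to the difference of two solutions, which satisfies a linear homogeneous equation with vanishing data on $D$.

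For (a), the inclusion $\Hol^{\omega,D}(p) \subseteq \Hol^\omega(p)$ is automatic. For the reverse, introduce the partial connection $\mathcal{E} := \ker\omega \cap \pi_*^{-1}(D) \subseteq TP$, whose $G$-invariant sections include all horizontal lifts $hX$ with $X \in \Gamma(D)$. Rewriting \eqref{curF0} as $h[X,Y] = [hX, hY] + \sigma(\Omega^\wedge(hX, hY))$, and using the selector decomposition of any $Z \in \Gamma(D^{k+1})$ as $Z = \sum_i [X_i, Y_i] + Z_2$ with $X_i, Y_i, Z_2 \in \Gamma(D^k)$, one obtains
\[
hZ = \sum_i [hX_i, hY_i] + hZ_2 + \sigma\Bigl(\textstyle\sum_i \Omega^\wedge(hX_i, hY_i)\Bigr),
\]
in which the last term vanishes because it represents $\sigma((\iota_\chi\Omega)(Z)) = 0$. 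Induction on the flag level then shows that every horizontal lift $hZ$ with $Z \in \Gamma(TM)$ lies in the Lie algebra of vector fields on $P$ generated by sections of $\mathcal{E}$. By the Sussmann--Stefan orbit theorem, the $\mathcal{E}$-orbit through $p$ is an immersed submanifold whose tangent space contains $\calH_q$ at every point $q$ of the orbit, and so contains every point $\omega$-horizontally reachable from $p$. For any $a \in \Hol^\omega(p)$, the point $p \cdot a$ is such a reachable point; the resulting $\mathcal{E}$-horizontal curve from $p$ to $p \cdot a$ projects to a $D$-horizontal loop at $\pi(p)$ whose $\omega$-horizontal lift is that very curve, giving $a \in \Hol^{\omega,D}(p)$.

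The main obstacle in (b) is the nonlinearity $[\alpha,\alpha]$; this is resolved because at level $k+1$ of the flag only previously determined data on $D^k$ contributes, reducing each step to the linear framework of Corollary~\ref{cor:Covariant}. In (a), the delicate step is bridging the infinitesimal bracket-generation of $\calH$ by $\mathcal{E}$ with the global statement about loops, and this is exactly where the orbit theorem is indispensable.
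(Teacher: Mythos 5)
Your proposal is correct and follows essentially the same route as the paper: part (a) via the curvature identity \eqref{curF0}, the selector decomposition $hZ=\sum_i[hX_i,hY_i]+hZ_2$, and the orbit theorem (the paper packages the latter as Lemma~\ref{lemma:control} applied to the chain $\mathcal{E}^1\subseteq\cdots\subseteq\mathcal{E}^r=\calH$); part (b) via the ansatz $\widetilde\omega=\omega+\alpha^\wedge$ and Corollary~\ref{cor:Covariant}, where the paper absorbs the quadratic term into the operator $L^\omega\beta=d^{\nabla^\omega}\beta+\tfrac12[\beta,\beta]$ rather than moving it to the right-hand side, but the layer-by-layer resolution through the flag is identical. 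The only cosmetic imprecision is your remark that the difference of two solutions satisfies a linear homogeneous equation; uniqueness really comes from the inductive determination of $\alpha$ on each $D^{k+1}$ from its values on $D^k$, which your own construction already supplies.
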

The proof relies on Corollary~\ref{cor:Covariant} and on the following lemma. We first give some extra notation. For every subset $\mathscr{A}$ of the Lie algebra $(\Gamma(TP),[\newbullet,\newbullet])$, we use $\Lie \mathscr{A}$ and $\Lie_p \mathscr{A}$, $p\in P$, to denote respectively the Lie algebra generated by $\mathscr{A}$ and the subspace of $T_pP$ made of the evaluations at $p$ of the elements of $\Lie \mathscr{A}$.

\begin{lemma} \label{lemma:control}
Let  $\calH$ and $\widehat \calH$ be two subbundles of $TP$. For $p \in P$, define the orbit $\mathscr{O}_{p}$ of $\calH$ at $p$ as the sets of points in $P$ that can be reached from $p$ by $\calH$-horizontal curves. Define in the same way the orbit $\widehat {\mathscr{O}}_{p}$  of $\widehat \calH$ at $p$. If
\begin{equation} \label{LieAlg} \Lie_p \Gamma(\widehat \calH)  \subseteq \Lie_p \Gamma(\calH), \qquad \text{for every } p \in P,\end{equation}
then $\widehat {\mathscr{O}}_{p}  \subseteq \mathscr{O}_{p}$ also holds for every $p \in P$. If equality holds true in \eqref{LieAlg}, then $\widehat {\mathscr{O}}_p = \mathscr{O}_p.$
\end{lemma}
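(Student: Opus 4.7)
The plan is to invoke the classical orbit theorem of Sussmann and Stefan, which asserts that for any family $\mathscr{F}$ of smooth vector fields on $P$, the orbit through a point $p$ (in the sense of absolutely continuous horizontal curves) is a connected immersed submanifold of $P$, and that $X|_q \in T_q\mathscr{O}_p$ for every $X \in \mathscr{F}$ and $q$ in the orbit. First I would apply this to $\mathscr{F} = \Gamma(\calH)$ to conclude that $\mathscr{O}_p$ is an immersed submanifold of $P$ with $X|_q \in T_q\mathscr{O}_p$ for every $X \in \Gamma(\calH)$ and $q \in \mathscr{O}_p$.

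Next I would upgrade this inclusion from $\Gamma(\calH)|_q$ to the full Lie algebra evaluation $\Lie_q \Gamma(\calH)$. Since every section of $\calH$ is tangent to the immersed submanifold $\mathscr{O}_p$ along $\mathscr{O}_p$, the Lie bracket of any two such sections is also tangent to $\mathscr{O}_p$ there, because the bracket of vector fields tangent to an immersed submanifold is again tangent. Induction on the bracket depth yields
\[
\Lie_q \Gamma(\calH) \;\subseteq\; T_q \mathscr{O}_p \qquad \text{for every } q \in \mathscr{O}_p.
\]
Combining this with the hypothesis \eqref{LieAlg} gives
\[
\widehat{\calH}|_q \;\subseteq\; \Lie_q \Gamma(\widehat\calH) \;\subseteq\; \Lie_q \Gamma(\calH) \;\subseteq\; T_q \mathscr{O}_p, \qquad q \in \mathscr{O}_p.
\]

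Finally, I would take any $\widehat\calH$-horizontal curve $c\colon[0,1]\to P$ with $c(0)=p$ and argue that $c$ remains in $\mathscr{O}_p$, so that $c(1)\in\mathscr{O}_p$. In a local slice chart provided by the orbit theorem (in which $\mathscr{O}_p$ locally coincides with a single plaque), the coordinates transverse to the plaque of the absolutely continuous curve $c$ have zero derivative almost everywhere and are therefore constant on a neighborhood of any $t$ for which $c(t)\in\mathscr{O}_p$. A standard continuation argument on the set of times $t$ with $c([0,t])\subseteq\mathscr{O}_p$ then gives $c([0,1])\subseteq\mathscr{O}_p$, hence $\widehat{\mathscr{O}}_p\subseteq\mathscr{O}_p$. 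When the inclusion in \eqref{LieAlg} is an equality, interchanging the roles of $\calH$ and $\widehat\calH$ in the argument yields the reverse inclusion and therefore $\widehat{\mathscr{O}}_p=\mathscr{O}_p$.

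The main obstacle will be the last paragraph: ensuring that the absolutely continuous curve $c$, known only to have velocity almost everywhere in the (possibly singular) distribution $q\mapsto T_q\mathscr{O}_p$, cannot leak out of the merely immersed submanifold $\mathscr{O}_p$. This is precisely the situation that the local structure statement of the Sussmann--Stefan theorem is designed to handle, so the conclusion will follow from a careful use of their slice charts rather than from any additional new input.
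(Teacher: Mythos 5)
Your proof is correct and takes essentially the same route as the paper: both invoke the Sussmann--Stefan orbit theorem to get that $\mathscr{O}_p$ is an immersed submanifold with $\Lie_q\Gamma(\calH)\subseteq T_q\mathscr{O}_p$, deduce $\widehat\calH|_q\subseteq T_q\mathscr{O}_p$ from the hypothesis, and then run a local-to-global argument (your continuation on $\{t: c([0,t])\subseteq\mathscr{O}_p\}$ versus the paper's finite subcover of $[0,1]$) to show a $\widehat\calH$-horizontal curve cannot leave the orbit. The only cosmetic difference is that the paper cites a corollary of the orbit theorem for the Lie-algebra tangency where you rederive it from closure of tangent fields under brackets.
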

Note that for every points $p_0$ and $p_1$ of $P$, the orbits $\mathscr{O}_{p_0}$ and $\mathscr{O}_{p_1}$ are either disjoint or coincide.
\begin{proof}
From the Orbit theorem, see e.g. \cite[Theorem~5.1]{AgSa04}, one gets that,  for every $p \in P$, $\mathscr{O}_{p}$ is a connected immersed submanifold of $P$. Furthermore, by \cite[Corollary~5.1]{AgSa04}, we have that for every $p_0 \in P$ and $p \in \mathscr{O}_{p_0}$, $\Lie_p \Gamma(\calH) \subseteq T_p \mathscr{O}_{p_0}$. It follows that for every $p \in \mathscr{O}_{p_0}$,
\[\widehat \calH_p \subseteq \Lie_p \Gamma(\widehat \calH) \subseteq \Lie_p \Gamma(\calH) \subseteq T_p \mathscr{O}_{p_0}.\]
As a consequence, $\widehat \calH|_{\mathscr{O}_{p_0}} \subset T \mathscr{O}_{p_0}$. Hence, for every $\widehat \calH$-horizontal curve $c\colon[0,1] \to P$ and $t \in [0,1]$, there exists a connected neighborhood $U_t$ of $t$ in $[0,1]$ such that $c(U_t) \subseteq \mathscr{O}_{c(t)}$. Since $[0,1]$ is compact, we can pick a finite number of point $0 = t_0 < t_1 < t_2 < \cdots < t_k \leq 1$, such that $\{ U_{t_j} \}_{j=1}^n$ is an open covering of $[0,1]$. Since for $j=0, \dots, k-1$, $U_j$ and $U_{j+1}$ are not disjoint, it must follow that the orbits $\mathscr{O}_{c(t_j)}$ all coincide with $\mathscr{O}_{c(t_0)} = \mathscr{O}_{c(0)}.$ Hence every $\widehat \calH$-horizontal curve $c$ with $c(0) = p$ is contained in $\mathscr{O}_p$, implying that $\widehat {\mathscr{O}}_p \subseteq \mathscr{O}_p$.
\end{proof}
We now turn to the proof of Proposition~\ref{th:UsualHol}.
\begin{proof}[Proof of Proposition~\ref{th:UsualHol}]
\begin{enumerate}[\rm (a)]
\item
For $1\leq k\leq r$, consider the subbundles $\mathcal{E}^k$ of $TP$ defined by
\[
\mathcal{E}^k = \{ h_p v \, : \, v \in D^k, p \in P\}.
\]
For every $p \in P$, let $\mathscr{O}_p$ and $\mathscr{O}^k_p$ denote the orbits of $\calH$ and $\mathcal{E}^k$ at $p$ respectively. From the definition of holonomy, it follows that
\[
\Hol^\omega(p) = \{ a \in G \, : \, p \cdot a \in \mathscr{O}_p \}.
\]
The same identity holds for $\Hol^{\omega,D}(p)$ with $\mathscr{O}_p$ substituted by $\mathscr{O}^1_p$. Hence, $\Hol^{\omega}(p) = \Hol^{\omega,D}(p)$ if  $\mathscr{O}^1_p =  \mathscr{O}_p$. We next show that these equalities hold true if equality
 \eqref{CurvChiZero} holds. We first prove that $\Lie \Gamma(\mathcal{E}^k) = \Lie\Gamma( \mathcal{E}^{k+1})$ for $1\leq k\leq r-1$.

Let then $1\leq k\leq r-1$ and notice that one has the obvious inclusion $\Lie \Gamma(\mathcal{E}^k) \subseteq \Lie \Gamma(\mathcal{E}^{k+1})$. Equality follows if $hZ\in  \Lie \Gamma(\mathcal{E}^k)$
for any vector field  $Z$ with values in $D^{k+1}$. Pick such a $Z$ and
let $X_1, \dots, X_l,$ and  $Y_1, \dots, Y_l$ be any choice of vector fields with values in $D^k$ such that $\chi(Z) = \sum_{i=1}^l X_i \wedge Y_i$. From the definition of $\chi$, it follows that $Z = \sum_{j=1}^l [X_j, Y_j] + Z_2$ where $Z_2$ is some vector field with values in $D^k$. Using~\eqref{curF0} and~\eqref{CurvChiZero}, we deduce that
\begin{equation} \label{hZ} hZ = \sum_{j=1}^l [hX_j,hY_j] + hZ_2 \in \Lie \Gamma(\mathcal{E}^k).
\nonumber \end{equation}
We finally get that $\Lie \Gamma(\mathcal{E}^1) = \Lie \Gamma(\mathcal{E}^r) = \Lie \Gamma(\calH)$ and conclude the argument by using Lemma~\ref{lemma:control}.

\item
Let $\widetilde \omega$ be a connection form $\widetilde \omega = \omega + \alpha^\wedge$ with $\alpha^\wedge$ being an equivariant horizontal one-form. Write the $\widetilde \omega $-horizontal lift   as $\widetilde h$ and let $\widetilde \Omega$ be the curvature form of $\widetilde \omega$. By definition,
one has $\widetilde h_p w = h_p w - \sigma(\alpha^\wedge( h_p w))$ for any $p \in P_x$, $w \in T_xM$, $x \in M$. Furthermore, for any vector fields $X$ and $Y$ on $M$, we have
\begin{linenomath}
\begin{align*}
& \quad \widetilde \Omega^\wedge(\widetilde hX, \widetilde hY) = d \omega(\widetilde hX, \widetilde hY) + d\alpha^\wedge(\widetilde hX, \widetilde hY) \\
& = - [\alpha^\wedge( hX), \alpha^\wedge( hY)] - \sigma(\alpha^\wedge( h X)) \alpha^\wedge( hY) + \sigma(\alpha^\wedge( hY)) \alpha^\wedge(  hX) \\
& \quad + \Omega^\wedge( hX, hY) +d\alpha^\wedge( hX,  hY) \\
& =  \Omega^\wedge( hX,  hY) + d\alpha^\wedge( hX,  hY) + [\alpha^\wedge( hX), \alpha^\wedge( hY)] \\
& = \Omega^\wedge( hX,  hY) + (\pr_{\calH}^* d\alpha^\wedge)(hX, hY) + \frac{1}{2} [\alpha^\wedge, \alpha^\wedge](hX,hY).
\end{align*}
\end{linenomath}

Consider the operator $L^\omega \colon \Gamma(T^*M \otimes \Ad P) \to \Gamma(\bigwedge^2 T^*M \otimes \Ad P),$ defined by
\[L^\omega \beta \colon= d^{\nabla^\omega}\! \beta + \frac{1}{2} [\beta, \beta].\]
For $1\leq k\leq r-1$, notice that if $\beta$ vanishes on $D^k$, then $L^\omega\beta(\chi(w)) = - \beta(w)$ for every $w \in D^{k+1}$.
From the above computations, it follows that one has the following equivalence: $\widetilde \omega \in [\omega]_D$ and $\iota_\chi \widetilde \Omega = 0$ if and only if the $\Ad P$-valued one-form $\alpha$ corresponding to $\alpha^\wedge$ satisfies the system of equations
\begin{equation} \label{extAlpha} \alpha|_D = 0, \qquad \iota_\chi L^\omega \alpha = -\iota_\chi \Omega.\end{equation}
This solution exists and is unique according to Corollary~\ref{cor:Covariant}.
\end{enumerate}
\end{proof}

Using Proposition~\ref{th:UsualHol} and its argument, we can now provide the above mentioned versions of Ambrose-Singer's and Ozeki's theorems for equiregular subbundles.
\begin{theorem} \label{th:AS} \emph{(Ambrose-Singer's theorem for horizontal holonomy group)} Let $\pi\colon P \to M$ be a principal $G$-bundle with connection form $\omega$, $D$ a bracket-generating, equiregular subbundle of $P$ of step $r$ and $\chi$ a selector of $D$.  Define the operator $L^\omega \colon \Gamma(T^*M \otimes \Ad P) \to \Gamma(\bigwedge^2 T^*M \otimes \Ad P)$ by
\begin{equation} \label{LOmega} L^\omega \beta \colon= d^{\nabla^\omega}\! \beta + \frac{1}{2} [\beta, \beta].\end{equation}
Let $\Omega$ be the curvature form of $\omega$ and define
\begin{equation} \label{OmegaChi} \Omega_\chi \colon= (\id + L^\omega \iota_\chi)^{r-1} \Omega.\end{equation}
Finally, let $\mathscr{O}_{p_0}$ be the set of points $p \in P$ that can be reached from $p_0$ with $\omega$-horizontal lifts of $D$-horizontal curves. Then the Lie algebra of $\Hol^{\omega,D}(p_0)$ is equal to
\[\{ \Omega_\chi^\wedge(h_p v, h_p w) \, : \, v, w \in T_{\pi(p)}M, p \in \mathscr{O}_{p_0}\}.\]
\end{theorem}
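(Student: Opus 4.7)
The plan is to reduce the horizontal holonomy of $\omega$ to the full holonomy of a modified connection, apply the classical Ambrose-Singer theorem to that modified connection, and then identify the resulting curvature with $\Omega_\chi$.

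First, by Proposition~\ref{th:UsualHol}(b) there exists a unique $\widetilde\omega\in[\omega]_D$ whose curvature $\widetilde\Omega$ satisfies $\iota_\chi\widetilde\Omega=0$, and then Proposition~\ref{th:UsualHol}(a) applied to $\widetilde\omega$ yields $\Hol^{\widetilde\omega}(p_0)=\Hol^{\widetilde\omega,D}(p_0)$. Since $\widetilde\omega$ and $\omega$ are $D$-equivalent, they produce the same $D$-horizontal lifts, so $\Hol^{\widetilde\omega,D}(p_0)=\Hol^{\omega,D}(p_0)$. Consequently, it suffices to compute the Lie algebra of the full holonomy group $\Hol^{\widetilde\omega}(p_0)$ in terms of $\Omega$ and $\chi$.

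Next, I would show $\widetilde\Omega=\Omega_\chi$. Write $\widetilde\omega=\omega+\alpha^{\wedge}$ for the horizontal equivariant one-form $\alpha^{\wedge}$ corresponding to an $\Ad P$-valued one-form $\alpha$. From the computation performed in the proof of Proposition~\ref{th:UsualHol}(b), the relation
\[
\widetilde\Omega=\Omega+L^\omega\alpha
\]
holds, and the conditions $\widetilde\omega\in[\omega]_D$ and $\iota_\chi\widetilde\Omega=0$ translate exactly into the system $\alpha|_D=0$, $\iota_\chi L^\omega\alpha=-\iota_\chi\Omega$. Applying Corollary~\ref{cor:Covariant} with $\beta=0$, $\eta=-\Omega$ and $L=L^\omega$ gives
\[
\alpha=\iota_\chi\sum_{j=0}^{r-2}\binom{r-1}{j+1}(L^\omega\iota_\chi)^j\,\Omega.
\]
Substituting back and using the binomial identity $\sum_{k=1}^{r-1}\binom{r-1}{k}T^k=(\id+T)^{r-1}-\id$ with $T=L^\omega\iota_\chi$, one obtains $\widetilde\Omega=(\id+L^\omega\iota_\chi)^{r-1}\Omega=\Omega_\chi$.

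Now the classical Ambrose-Singer theorem applied to $\widetilde\omega$ states that the Lie algebra of $\Hol^{\widetilde\omega}(p_0)$ is spanned by
\[
\bigl\{\widetilde\Omega^{\wedge}(\widetilde h_p v,\widetilde h_p w)\;:\;v,w\in T_{\pi(p)}M,\ p\in\widetilde{\mathscr{O}}_{p_0}\bigr\},
\]
where $\widetilde{\mathscr{O}}_{p_0}$ is the set of points reachable from $p_0$ by $\widetilde\omega$-horizontal lifts of arbitrary absolutely continuous curves in $M$. Since $\widetilde\Omega^{\wedge}$ vanishes on $\calV=\ker\pi_*$ and $h_p v-\widetilde h_p v\in\calV_p$, the evaluation is unchanged if we replace $\widetilde h_p$ by $h_p$; and $\widetilde\Omega^{\wedge}$ is by construction the equivariant horizontal form associated to $\Omega_\chi$. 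It remains to identify $\widetilde{\mathscr{O}}_{p_0}$ with $\mathscr{O}_{p_0}$: Proposition~\ref{th:UsualHol}(a) applied to $\widetilde\omega$ shows that the $\widetilde\omega$-lifts of all curves already stay in the orbit of the $\widetilde\omega$-lifts of $D$-horizontal curves, and $D$-equivalence identifies the latter orbit with $\mathscr{O}_{p_0}$. Combining these ingredients yields the stated description of the Lie algebra.

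The main technical obstacle is the algebraic verification that the formula produced by Corollary~\ref{cor:Covariant} indeed telescopes into $(\id+L^\omega\iota_\chi)^{r-1}\Omega$; the remaining steps are bookkeeping with horizontal lifts, $D$-equivalence, and the classical Ambrose-Singer statement.
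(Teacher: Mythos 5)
Your proposal is correct and follows essentially the same route as the paper: reduce to the unique $\widetilde\omega\in[\omega]_D$ with $\iota_\chi\widetilde\Omega=0$ via Proposition~\ref{th:UsualHol}, identify $\widetilde\Omega=\Omega_\chi$ by solving \eqref{extAlpha} with Corollary~\ref{cor:Covariant} and the binomial telescoping, and then invoke the classical Ambrose--Singer theorem. Your extra bookkeeping about replacing $\widetilde h_p$ by $h_p$ and identifying the orbits is just making explicit what the paper leaves implicit.
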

\begin{proof}
Let $\widetilde \omega \in [\omega]_D$ be the unique element with curvature $\widetilde \Omega$ satisfying $\iota_\chi \widetilde \Omega = 0$. We will show that $\Omega_\chi = \widetilde \Omega$. From the proof of Theorem~\ref{th:UsualHol}(b), one has that $\widetilde \Omega = \Omega + L^\omega \alpha$, where $\alpha$ is the unique solution to \eqref{extAlpha}. According to Corollary~\ref{cor:Covariant}, one gets that $\alpha = \iota_\chi \sum_{j=0}^{r-2} \binom{r-1}{j+1} ( L^\omega \iota_\chi )^j \Omega.$
It follows that
\[\widetilde \Omega = \Omega + L^\omega \iota_\chi \sum_{j=0}^{r-2} \binom{r-1}{j+1} ( L^\omega\iota_\chi)^j \Omega = \sum_{j=0}^{r-1} \binom{r-1}{j} (L^\omega \iota_\chi)^j \Omega = \Omega_\chi.\]
We conclude the proof of Theorem~\ref{th:AS} by using Proposition~\ref{th:UsualHol} and the usual Ambrose-Singer Theorem.
\end{proof}

\begin{theorem}\label{th:O}\emph{(Ozeki's theorem for horizontal holonomy group)}
We use the notations of Theorem~\ref{th:AS} and the following ones: let $h$ be the $\omega$-horizontal lift, $p_0$~be an arbitrary point and denote the Lie algebras of $G$ and $\Hol^{\omega,D}(p_0)$ by respectively $\mathfrak{g}$ and $\mathfrak{h}$. For any $p \in P$, define the subspace $\mathfrak{a}(p)$ of $\mathfrak{g}$ by 
$$\mathfrak{a}(p) = \spn \left\{ h X_{1} hX_{2} \dots  hX_{k} \Omega^\wedge_\chi(hY_1, hY_2) \big|_p \, : \,
\begin{array}{c}   Y_1, Y_2 \in \Gamma(TM), \\ X_{j} \in \Gamma(D),  k = 0,1,\dots \end{array} \right\}.
$$
Then $\mathfrak{a}(p_0)$ is a subalgebra of $\mathfrak{h}$. Furthermore,
\begin{enumerate}[\rm (a)]
\item $\mathfrak{h}$ is spanned by $\{ \mathfrak{a}(p) \, \mid \, p \in \mathscr{O}_{p_0} \}.$
\item If $\rank \mathfrak{a}(p)$ is independent of $p$, then $\mathfrak{h} = \mathfrak{a}(p_0)$.
\item If both $\omega$ and $\chi$ are analytic, then $\mathfrak{h} = \mathfrak{a}(p_0)$.
\end{enumerate}
\end{theorem}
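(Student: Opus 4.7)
The plan is to reduce Theorem~\ref{th:O} to the classical Ozeki theorem applied to an auxiliary connection, in the same spirit as the proof of Theorem~\ref{th:AS}. Let $\widetilde\omega \in [\omega]_D$ be the unique connection provided by Proposition~\ref{th:UsualHol}(b) whose curvature $\widetilde\Omega$ satisfies $\iota_\chi\widetilde\Omega = 0$; as observed in the proof of Theorem~\ref{th:AS}, $\widetilde\Omega = \Omega_\chi$. Proposition~\ref{th:UsualHol}(a) and its proof yield $\Hol^{\omega,D}(p_0) = \Hol^{\widetilde\omega}(p_0)$, together with the identification of $\mathscr{O}_{p_0}$ with the full $\widetilde\omega$-holonomy orbit of $p_0$.

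Write $\widetilde h$ for the $\widetilde\omega$-horizontal lift and set
\[
\widetilde{\mathfrak{a}}(p) = \spn \left\{ \widetilde h X_1 \cdots \widetilde h X_k\, \widetilde\Omega^\wedge(\widetilde h Y_1,\widetilde h Y_2)\big|_p \,:\, X_j, Y_i \in \Gamma(TM),\, k \geq 0 \right\}.
\]
The classical Ozeki theorem, applied to $\widetilde\omega$, establishes assertions (a), (b), (c) with $\widetilde{\mathfrak{a}}$ in place of $\mathfrak{a}$. It therefore suffices to prove $\widetilde{\mathfrak{a}}(p) = \mathfrak{a}(p)$ for every $p \in P$.

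The inclusion $\mathfrak{a}(p) \subseteq \widetilde{\mathfrak{a}}(p)$ follows from two elementary observations stemming from $\widetilde\omega \in [\omega]_D$: first, $hX = \widetilde h X$ for every $X \in \Gamma(D)$; second, since $\widetilde\Omega^\wedge$ is horizontal and $\widetilde h Y_i - hY_i \in \calV$, we have $\widetilde\Omega^\wedge(\widetilde h Y_1, \widetilde h Y_2) = \Omega_\chi^\wedge(hY_1, hY_2)$ as $\mathfrak{g}$-valued functions on $P$. The reverse inclusion is the heart of the argument, and rests on the factorization formula extracted from the proof of Proposition~\ref{th:UsualHol}(a): for every $Z \in \Gamma(D^{m+1})$ one can write
\[
\widetilde h Z = \sum_{j}[\widetilde h X_j, \widetilde h Y_j] + \widetilde h Z_2, \qquad X_j, Y_j, Z_2 \in \Gamma(D^m),
\]
where the vertical correction ordinarily present in~\eqref{curF0} is annihilated by $\iota_\chi\widetilde\Omega = 0$. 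Reading this as an identity of first-order differential operators on $C^\infty(P,\mathfrak{g})$ and arguing by induction on $m$, every $\widetilde h Z$ with $Z \in \Gamma(TM) = \Gamma(D^r)$ becomes a linear combination of compositions of operators $\widetilde h X = hX$ with $X \in \Gamma(D)$. Substituting these expansions into the generators of $\widetilde{\mathfrak{a}}(p)$ and applying the second observation to the innermost factor $\widetilde\Omega^\wedge(\widetilde h Y_1, \widetilde h Y_2) = \Omega_\chi^\wedge(hY_1, hY_2)$ yields $\widetilde{\mathfrak{a}}(p) \subseteq \mathfrak{a}(p)$.

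Once $\widetilde{\mathfrak{a}}(p) = \mathfrak{a}(p)$ is in hand, assertions (a), (b), (c) transfer verbatim from the classical Ozeki theorem applied to $\widetilde\omega$. The main obstacle is the inductive reduction from vector fields in $\Gamma(TM)$ to those in $\Gamma(D)$ inside the definition of $\mathfrak{a}(p)$; the selector identity $\iota_\chi\widetilde\Omega = 0$ is precisely what makes each inductive step clean by killing the vertical curvature contribution.
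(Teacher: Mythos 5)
Your proposal is correct and follows essentially the same route as the paper: reduce to the classical Ozeki theorem for the modified connection $\widetilde\omega$ with $\iota_\chi\widetilde\Omega=0$, and show the two candidate generating spaces coincide via the factorization $\widetilde h Z=\sum_j[\widetilde h X_j,\widetilde h Y_j]+\widetilde h Z_2$ together with $\widetilde h X=hX$ for $X\in\Gamma(D)$. The only detail left implicit is that $\widetilde\omega$ is itself analytic when $\omega$ and $\chi$ are (needed to invoke the analytic case of Ozeki for part (c)); this follows at once from the explicit formula for $\widetilde\omega$, since $L^\omega$ and $\iota_\chi$ preserve analyticity, as the paper points out.
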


\begin{proof}
By Theorem~\ref{th:UsualHol}, let $\widetilde \omega \in [\omega]_D$ be the unique element such that its curvature $\widetilde \Omega$ satisfies $\widetilde \Omega(\chi(\newbullet)) =0$. We then know that $\Hol^{\omega,D}(p) = \Hol^{\widetilde \omega}(p).$ Let $\widetilde h$ denote the $\widetilde \omega$-horizontal lift. We know that $\widetilde \Omega(\widetilde hY_1, \widetilde hY_2) = \Omega_{\chi}(hY_1, hY_2).$ Furthermore, since $L^\omega$ defined in \eqref{LOmega} preserves analyticity, the construction of $\widetilde \omega$ in the proof of Theorem~\ref{th:UsualHol}~(b) gives us that this connection is analytic whenever $\omega$ and $\chi$ are analytic.

Consider the subspaces
$$ \mathfrak{b}(p) = \spn \left\{ \widetilde h Z_{1} \widetilde hZ_{2} \dots   \widetilde hZ_{k} \Omega^\wedge_\chi(hY_1, hY_2) \big|_p \, : \, \\  \begin{array}{c} Y_1, Y_2 \in \Gamma(TM), \\
Z_{j} \in \Gamma(TM),  k = 0,1,\dots \end{array} \right\}.$$
The usual Ozeki theorem along with the above observations means that our desired result holds true with $\mathfrak{b}(p)$ in the place of $\mathfrak{a}(p)$. We will show that $\mathfrak{a}(p) = \mathfrak{b}(p)$ to complete the proof.

If $Z$ is a vector field in $D^{k+1}$ with $\chi(Z) = \sum_{i=1}^l X_i \wedge Y_i$, then
\[\widetilde hZ = \sum_{i=1}^l [\widetilde hX_i, \widetilde hY_i] + \widetilde hZ_2, \qquad X_i, Y_i, Z_2 \in \Gamma(D^k),\]
since $\widetilde \Omega(\chi(\cdot)) = 0$. It follows that we can write $\widetilde hZ$ as a sum of $k$-th order operators constructed with horizontal lifts of elements in $D$, thus yielding
\begin{align*} \mathfrak{b}(p) = \spn \left\{ \widetilde h Z_{1} \widetilde hZ_{2} \dots  \widetilde hZ_{k} \Omega^\wedge_\chi(hY_1, hY_2) \big|_p \, : \, \begin{array}{c}   Y_1, Y_2 \in \Gamma(TM), \\ Z_{j} \in \Gamma(TM),  k = 0,1,\dots \end{array} \right\} \\
= \spn \left\{ \widetilde h X_{1} \widetilde hX_{2} \dots \widetilde hX_{k} \Omega^\wedge_\chi(hY_1, hY_2) \big|_p  \, : \,  \begin{array}{c} Y_1, Y_2 \in \Gamma(TM) \\ X_{j} \in \Gamma(D),  k = 0,1,\dots \end{array} \right\}.\end{align*}
Since $\widetilde \omega \in [\omega]_D$, we have that $\widetilde h X = hX$ for every $X \in \Gamma(D)$ and the result follows.
\end{proof}

\begin{remark}
Let $\omega$ be a connection on the principal $G$ bundle $P \stackrel{\pi}{\to} M$. Proposition~\ref{th:UsualHol} says that each selector $\chi$ gives us a unique connection $\widetilde \omega \in [\omega]_D$ such that $\Hol^{\omega,D}(p) =\Hol^{\widetilde \omega,D}(p) = \Hol^{\widetilde \omega}(p).$ However, we do not claim that these are the only elements satisfying this property. For example, if $\Hol^{\omega,D} = G$, then it trivially follows that $\Hol^{\omega,D} =\Hol^{\widetilde \omega,D} = \Hol^{\widetilde \omega}$ holds for any $\widetilde \omega \in [\omega]_D$.
\end{remark}

\subsection{Horizontal Holonomy of affine connections} \label{sec:Affine}
As in the case of usual holonomy, we can also consider the horizontal holonomy group of an affine connection, as initiated in \cite{HMCK15}. Let $\Pi\colon V \to M$ be a vector bundle with an affine connection~$\nabla$. Let $D$ be a subbundle of $TM$ and use $\mathscr{L}^{D}(x)$ to denote the set of $D$-horizontal loops $\gamma\colon[0,1] \to M$ based at $x \in D$. For $t\in [0,1]$, let $\mathcal{P}_\gamma(t)\colon V_x \to V_{\gamma(t)}$
denote the linear isomorphism defined by the parallel transport along the curve~$\gamma$ in time~$t$. Then we define the horizontal holonomy group of $\nabla$ by
\[\Hol^{\nabla, D}(x) = \left\{ \mathcal{P}_\gamma(1) \in \GL(V_x) \, : \, \gamma \in \mathscr{L}^{D}(x) \, \right\}.\]
Write $\Hol^{\nabla,TM}(x) = \Hol^\nabla(x)$. We say that two connections $\nabla$ and $\widetilde \nabla$ are \emph{$D$-equivalent} if $(\nabla_v - \widetilde \nabla_v)Z = 0$ for any $v \in D$ and $ Z \in \Gamma(V)$. We write $[\nabla]_D$ for the equivalence class of $\nabla$ with respect to this relation. Clearly, $\Hol^{\widetilde \nabla, D}(x) = \Hol^{\nabla,D}(x)$ if $\widetilde \nabla \in [\nabla]_D.$

The correspondence to our theory of principal bundles goes as follows. Let $\nu$ be the rank of $V$ and consider $\mathbb{R}^\nu$ endowed with its canonical basis denoted by $e_1, \dots , e_\nu$. \emph{The frame bundle} $\pi\colon\FGL(V)\to M$ of~$V$ is the principal $\GL(\nu)$-bundle such that for every $x \in M$, the fiber $\FGL(V)_x$ over $x$ consists of all linear isomorphisms $\varphi\colon \mathbb{R}^\nu \to V_x$ and the group $\GL(\nu)$ acts on the right by composition.

From the affine connection $\nabla$, we construct a corresponding principal connection $\omega$ on $\FGL(V)$ as follows. Define $\calH \subseteq T\FGL(V)$ as the collection of all tangent vectors of smooth curves $\varphi$ in $\FGL(V)$ such that, for every $1\leq j\leq \nu$,
$\varphi(\newbullet) e_j$  is a $\nabla$-parallel vector field along $\pi\circ \varphi$. It is standard to check that $\calH \oplus \ker \pi_* = T\FGL(V)$ with $\calH$ being invariant under the group action. Hence, there exists a unique connection form $\omega$ satisfying $\ker \omega = \calH$.

In this case, we can identity $\Ad \FGL(V)$ with the vector bundle $\mathfrak{gl}(V)$ of endomorphisms of $V$ through the mapping $(\varphi, A)/G \mapsto \varphi \circ A \circ \varphi^{-1}$. Furthermore, for any $\varphi \in \FGL(V)_x$, we have the correspondence $\Hol^{\nabla,D}(x) = \varphi \circ \Hol^{\omega,D}(\varphi) \circ \varphi^{-1}$. Also, if $\Omega$ is the curvature form of $\omega$, then $R^{\nabla}(v,w)  = \varphi \circ \Omega^\wedge(h_\varphi v, h_\varphi w) \circ \varphi^{-1}$, where the curvature $R^{\nabla}(X,Y) = [\nabla_X , \nabla_Y] - \nabla_{[X,Y]}$ is seen as a $\mathfrak{gl}(V)$-valued two-form.

We summarize our results so far in this setting.
Let $D$ be a bracket-generating, equiregular subbundle of $TM$ of step $r$ and $\Pi\colon V \to M$ a vector bundle over $M$. If~$\nabla$ is an affine connection on $V$, we will denote the induced connection on $\gl(V)$ by the same symbol. Corresponding to $\nabla$, define an operator $L^{\nabla}\colon \Gamma(T^*M \otimes \mathfrak{gl}(V)) \to \Gamma(\bigwedge^2 T^*M \otimes \gl(V))$ by
\[L^\nabla \alpha = d^\nabla \alpha + \frac{1}{2} [\alpha,\alpha].\]
Then Proposition~\ref{th:UsualHol} and Theorems~\ref{th:AS} and \ref{th:O} read as follows in the case of affine connection.
\begin{theorem} \label{th:affineAS}
Let $\nabla$ be an affine connection on $V$.
\begin{enumerate}[\rm (a)]
\item If there exist a selector $\chi$ of $D$ such that $\iota_\chi R^{\nabla} = 0$, then $\Hol^{\nabla,D}(x) = \Hol^{\nabla}(x)$ for any $x \in M$.
\item For every connection $\nabla$ on $V$ and every selector $\chi$ of $D$, there exists a unique affine connection $\widetilde \nabla \in [\nabla]_D$ such that $\iota_\chi R^{\widetilde \nabla}=0$. The connection $\widetilde \nabla$ is equal to $\nabla + \alpha$ with $\alpha \in \Gamma(T^*M \otimes \mathfrak{gl}(V))$ given by
\[\alpha = \iota_\chi \sum_{j=0}^{r-2} \binom{r-1}{j+1} ( L^\nabla \iota_\chi )^j  R^\nabla.\]
This implies in particular that $\Hol^{\nabla,D}(x)$ is a Lie group.
\item For any $x \in M$, if $\mathfrak{h}$ is the Lie algebra of $\Hol^{\nabla,D}(x)$, then
\[\mathfrak{h} = \left\{\mathcal{P}_\gamma(1)^{-1}R^\nabla_\chi(v,w) \mathcal{P}_\gamma(1) \, : \, \begin{array}{c}\text{$\gamma\colon [0,1] \to M$ is $D$-horizontal} \\ \gamma(0)=x, \gamma(1)=y, v,w \in T_y M \end{array} \right\}. \]
\item For $x \in M$, let $\mathfrak{h}$ denote the Lie algebra of $\Hol^{\nabla,D}(x)$. Let $\chi$ be an arbitrary selector and define
\[R_\chi^\nabla \colon= (\id + L^\nabla \iota_\chi)^{r-1} R^\nabla.\]
For any $y \in M$, define $\mathfrak{a}(y) \subseteq \mathfrak{gl}(V_y)$ given by
\begin{equation} \label{frakax} \mathfrak{a}(y) = \left\{ \nabla_{X_1} \cdots \nabla_{X_k} R^{\nabla}_\chi(Y_1, Y_2) |_y \, : \, \begin{array}{c} Y_1, Y_2 \in \Gamma(TM), \\ X_i \in \Gamma(D) , k=0,1,2 \dots \end{array}  \right\}, \end{equation}
where the symbol $\nabla$ appearing in \eqref{frakax} denotes the connection induced on $\gl(V)$ by $\nabla$. Then $\mathfrak{a}(x)$ is a subalgebra of $\mathfrak{h}$. Furthermore, if $\gamma\colon[0,1] \to M$ is any $D$-horizontal curve with $\gamma(0) = x$ and $\gamma(1) =y$, then $\mathcal{P}_\gamma(1)^{-1} \mathfrak{a}(y)\mathcal{P}_\gamma(1)$ is contained in $\mathfrak{h}$, and $\mathfrak{h}$ is spanned by these subalgebras. Finally, if the rank of $\mathfrak{a}(y)$ is independent of $y$ or if both $\nabla$ and $\chi$ are analytic, then $\mathfrak{a}(x) = \mathfrak{h}.$
\end{enumerate}
\end{theorem}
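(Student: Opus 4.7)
The plan is to deduce Theorem~\ref{th:affineAS} from the principal-bundle results Proposition~\ref{th:UsualHol} and Theorems~\ref{th:AS} and~\ref{th:O} via the frame bundle correspondence set up at the beginning of Section~\ref{sec:Affine}. Let $\omega$ be the principal connection on $\FGL(V)$ induced by $\nabla$. The relevant dictionary is: (i) $\Ad \FGL(V) \cong \gl(V)$ via $(\varphi,A)/G \mapsto \varphi A \varphi^{-1}$; (ii) $R^{\nabla}(v,w) = \varphi \circ \Omega^\wedge(h_\varphi v,h_\varphi w) \circ \varphi^{-1}$; (iii) $\Hol^{\nabla,D}(x) = \varphi \circ \Hol^{\omega,D}(\varphi) \circ \varphi^{-1}$; (iv) $\widetilde \nabla \in [\nabla]_D$ if and only if the induced $\widetilde \omega \in [\omega]_D$; and (v) under (i), the connection $\nabla^\omega$ on $\Ad \FGL(V)$ coincides with the connection on $\gl(V)$ induced by $\nabla$, so that the operators $L^\omega$ and $L^\nabla$ intertwine.

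For (a), if $\iota_\chi R^\nabla = 0$, then (ii) gives $\iota_\chi \Omega = 0$, so Proposition~\ref{th:UsualHol}(a) yields $\Hol^{\omega,D}(\varphi) = \Hol^\omega(\varphi)$, and (iii) gives the desired equality. For (b), the uniqueness of $\widetilde \omega \in [\omega]_D$ with $\iota_\chi \widetilde \Omega = 0$ from Proposition~\ref{th:UsualHol}(b) translates, via (iv), into uniqueness of $\widetilde \nabla \in [\nabla]_D$ with $\iota_\chi R^{\widetilde \nabla} = 0$. The explicit formula for $\alpha = \widetilde \nabla - \nabla$ follows from Corollary~\ref{cor:Covariant} applied to the operator $L^\nabla$ with $\beta = 0$ and $\eta = -R^\nabla$. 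To apply Corollary~\ref{cor:Covariant} one first checks that if $\beta$ vanishes on $D^k$, then $(\id + \iota_\chi L^\nabla)\beta$ vanishes on $D^{k+1}$: this holds because $\chi(w) \in \bigwedge^2 D^k$ for $w \in D^{k+1}$ by Definition~\ref{def:Selector}~(I), so $[\beta,\beta](\chi(w)) = 0$, reducing $L^\nabla \beta(\chi(w))$ to $d^\nabla \beta(\chi(w)) = -\beta(w)$, the last identity being the vector-valued version of Definition~\ref{def:Selector}~(II) noted just before Corollary~\ref{cor:Covariant}.

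For (c), Theorem~\ref{th:AS} describes the Lie algebra of $\Hol^{\omega,D}(\varphi)$ as the span of the vectors $\Omega_\chi^\wedge(h_p v,h_p w)$ with $p$ ranging over the orbit $\mathscr{O}_\varphi$. By (ii) applied to $\Omega_\chi$ in place of $\Omega$ and the fact that a point $p \in \mathscr{O}_\varphi$ over $y = \pi(p)$ has the form $p = \mathcal{P}_\gamma(1) \circ \varphi$ for some $D$-horizontal curve $\gamma$ from $x$ to $y$, one obtains
\[
\Omega_\chi^\wedge(h_p v, h_p w) = \varphi^{-1} \mathcal{P}_\gamma(1)^{-1} R^\nabla_\chi(v,w) \mathcal{P}_\gamma(1)\, \varphi,
\]
and conjugation by $\varphi$ via (iii) gives the expression in the statement. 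For (d), Theorem~\ref{th:O} provides an analogous span using $hX_1 \cdots hX_k \Omega_\chi^\wedge(hY_1,hY_2)|_p$ with $X_i \in \Gamma(D)$. The translation uses the standard identity $(hX\, s^\wedge)(p) = p^{-1}(\nabla_X s)(\pi(p)) p$ for $s \in \Gamma(\gl(V))$, which is the defining property of the induced connection on $\Ad \FGL(V)$; iterating this identity and combining with (ii) converts the generating set of $\mathfrak{a}(p)$ from Theorem~\ref{th:O} into the generating set of $\mathfrak{a}(y)$ in the statement, modulo conjugation by $\mathcal{P}_\gamma(1) \circ \varphi$. The three sub-claims of (d) then follow from the corresponding items of Theorem~\ref{th:O}, the analyticity claim transferring without change since $\omega$ is analytic whenever $\nabla$ is.

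The main obstacle I expect is the bookkeeping in (d): carefully verifying that the iterated action of $\omega$-horizontal vector fields $hX_1, \dots, hX_k$ on the equivariant function corresponding to the section $R^\nabla_\chi(Y_1,Y_2)$ of $\gl(V)$ translates, after pointwise conjugation by the frame $p$, into the iterated covariant derivative $\nabla_{X_1} \cdots \nabla_{X_k} R^\nabla_\chi(Y_1,Y_2)$ evaluated at $\pi(p)$. Once this identification is secured, the remaining content of the theorem is a direct transcription of the frame-bundle statements into the affine-connection language.
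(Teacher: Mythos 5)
Your proposal is correct and follows exactly the route the paper intends: Theorem~\ref{th:affineAS} is stated there as a direct transcription of Proposition~\ref{th:UsualHol} and Theorems~\ref{th:AS} and~\ref{th:O} through the frame-bundle dictionary of Section~\ref{sec:Affine}, and the paper gives no separate proof. Your write-up supplies the same translation, including the key bookkeeping identity $(hX\, s^\wedge)(p) = p^{-1}(\nabla_X s)(\pi(p))\,p$ and the observation that the quadratic term of $L^\nabla$ vanishes on $\chi(D^{k+1}) \subseteq \bigwedge^2 D^k$, so it matches the paper's argument in substance and detail.
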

\begin{remark} Theorem~\ref{th:affineAS} greatly extends the results of \cite{HMCK15} regarding the horizontal holonomy group of an affine connection in case the subbundle $D$ is equiregular and bracket-generating. Note though that it is proved in \cite{HMCK15} that the last conclusion of Item (b), namely that $\Hol^{\omega,D}(x)$ has the structure of a Lie group, still holds true under the sole assumption for $D$ to be completely controllable. This last result has been obtained by recasting horizontal holonomy issues within the framework of development of one manifold onto another one (cf. \cite{CK-MSFM}). 
\end{remark}
\subsection{Equiregular subbundles} \label{sec:NotBG}
The case where the subbundle $D$ is equiregular of step~$r$ but not necessarily bracket-generating can be reduced to the bracket-generating situation described previously by restricting to the leaves of the foliation of $D^r$. According to Frobenius theorem, there exists a corresponding foliation $\calF$ of $M$ tangent to $D^r$. Let $F$ be a leaf of the foliation $\calF$. If $x \in F$, then $D|_x \subseteq D^r|_x = T_xF$. Hence, if $p \in P_x$, then $\Hol^{\omega, D}(p)$ equals $\Hol^{\omega|_F,D|_F}(p)$ where $\omega|_F$ is the restriction of $\omega$ to the principal bundle $P|_F \to F$. By restricting to each leaf of the foliation of $D^r$, we are back to the case where $D$ is also bracket-generating. In particular, if $D$ is integrable, then $D^r = D^1 = D$ and $\Hol^{\omega,D}(p) = \Hol^{\omega|_F}(p)$ for any $p \in P_x, x \in F$.

\subsection{General subbundle} \label{sec:GeneralSB}
Let $D$ be any subbundle of $TM$. Define $\underline{D}^1 = \Gamma(D)$ and, for $k\geq 1$, set
\[
\underline{D}^{k+1} = \left\{ Y, [X,Y] \, : \, Y \in \underline{D}^k, X \in \Gamma(D) \right\}.
\]
For every point $x \in M$, define  \emph{the growth vector} of $D$ at $x \in M$ as the sequence $\underline{n}(x)= (n_k(x))_{k\geq 1}$ where $n_k(x) = \rank \underline{D}^k|_x$.
We say that $x \in M$ is a \emph{regular point} of $D$ if there exists a neighbourhood $U$ of $x$ where the growth vector is constant. We call a point \emph{singular} if it is not regular. Recall that
the set of singular points of $D$ is closed with empty interior, cf.~\cite[Sect.\ 2.1.2, p.\ 21]{Jea14}.

Let $\pi\colon P \to M$ be a principal $G$-bundle with a connection $\omega$. Let $x$ be a regular point of $D$, $p \in P_x$ and $U$ a neighbourhood of $x$ where the growth vector of $D$ is constant. We use $\omega|_U$ to denote the restriction of $\omega$ to $P|_U \to U$. Then one clearly has the following inclusions
\begin{equation} \Hol^{\omega|_U, D|_U}(p) \subseteq \Hol^{\omega,D}(p) \subseteq \Hol^{\omega}(p).\end{equation}
Since $D|_U$ is equiregular, it can be computed with the methods mentioned above.

\begin{example}
Consider $\mathbb{R}^3$ with coordinates $(x,y,z)$. Define $D$ as the span of $\frac{\partial}{\partial x}$ and $\frac{\partial}{\partial y} + x^2 \frac{\partial}{\partial z}$. Then any point $(x,y,z)$ with $x \neq 0$ is a regular point.
\end{example}

\subsection{Comparison with previous results in the contact case}
Assume that $M$ is an oriented $2n+1$-dimensional manifold and that $D$ is an oriented subbundle of rank $2n$. Consider the skew-symmetric tensor
\begin{equation} \label{FirstcalR} \calR\colon \bigwedge^2 D \to TM/D, \qquad \calR(X|_x,Y|_x) = [X,Y]|_x \text{ mod } D,\end{equation}
for any $X, Y \in \Gamma(D).$
As $[X,Y]|_x \text{ mod } D$ does not depend of choices of vector fields~$X$ and~$Y$ extending $X|_x$ and $Y|_x$, this map is a well-defined tensor vanishing if and only if $D$ is integrable. On the other hand, if $\calR$ is surjective and non-degenerate as a bilinear form, then $D$ is called \emph{a contact structure}. In particular, contact structures are bracket-generating and equiregular subbundles of step 2. In Example~\ref{ex:Contact} for instance, the subbundle $D$  is a contact structure.

Horizontal holonomy was first defined for such subbundles $D$ in \cite{FGR97}. On a contact manifold, by Item~(I) in Definition~\ref{def:Selector}, any selector will be a map $\chi \colon TM \to \bigwedge^2 D$ that vanishes on $D$. It follows that $\chi$ can be considered as a map $\chi\colon TM/D \to \bigwedge^2 D$. By Item~(II), the map $\calR$ in \eqref{FirstcalR} is a left inverse to $\chi$. Since both~$M$ and~$D$ are oriented, we get an induced orientation on $TM/D$. Let $\xi$ be a positively oriented basis of $TM/D$ and define $\chi(\xi) = \chi^\xi$. Then $\bigwedge^2 D = \ker \calR \oplus \spn\{ \chi^\xi \}$, and therefore choosing a selector of $D$ is equivalent to choosing a two-vector field~$\chi^\xi$ that never takes values in $\ker \calR$, up to multiplication by non-vanishing smooth functions.
Hence, Proposition~2.4 and Theorem~2.1 in  \cite{FGR97} can be considered as special cases of our results and we have reformulated them below  in terms of $D$-equivalence rather than the language of partial connections used in \cite{FGR97}.
\begin{corollary}
Let $D$ be a contact structure on $M$. Let $P \to M$ be a principal $G$-bundle with a connection form $\omega$. Let $\chi^\xi \in \Gamma(\bigwedge^2 D)$ be a two-vector field that is transverse to $\ker \calR$. Then there exists a unique connection $\widetilde \omega \in [\omega]_D$ whose curvature $\widetilde \Omega$ satisfies $\widetilde \Omega(\chi^\xi) = 0$. Furthermore, if $p_0 \in P$, then the Lie algebra of $\Hol^{\omega,D}(p_0)$ is spanned by $\{  \widetilde \Omega^\wedge(h_p u, h_p v) \, : \, v,w \in T_{\pi(p)} M, p \in \mathscr{O}_{p_0}\}.$
\end{corollary}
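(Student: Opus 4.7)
The plan is to identify the hypothesis on $\chi^\xi$ with exactly the data of a selector of $D$ in the sense of Definition~\ref{def:Selector}, and then to specialize Proposition~\ref{th:UsualHol} and Theorem~\ref{th:AS} to this setup.

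First, I would promote the two-vector field $\chi^\xi$ to a selector $\chi \in \Gamma(T^*M \otimes \bigwedge^2 TM)$. Because $D$ is contact it is bracket-generating and equiregular of step $r = 2$, so $D^1 = D$ and $D^2 = TM$. The transversality hypothesis says that $\zeta := \calR(\chi^\xi)$ is a nowhere-vanishing section of $TM/D$; pick any lift $\widehat \zeta \in \Gamma(TM)$, so that $TM = D \oplus \mathbb{R}\widehat \zeta$ pointwise, and set $\chi|_D = 0$, $\chi(\widehat \zeta) = \chi^\xi$, extended linearly. Condition~(I) of Definition~\ref{def:Selector} is then immediate. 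For condition~(II), write $\chi^\xi = \sum_i X_i \wedge Y_i$ with $X_i, Y_i \in \Gamma(D)$; for any $\alpha \in \Gamma(\Ann D)$ the Cartan formula gives $d\alpha(X_i, Y_i) = -\alpha([X_i,Y_i])$, hence
\[
d\alpha(\chi^\xi) \;=\; -\alpha\Big(\sum_i [X_i, Y_i]\Big) \;=\; -\alpha(\widehat \zeta),
\]
where in the last equality we used $\sum_i [X_i, Y_i] \equiv \calR(\chi^\xi) \equiv \widehat \zeta \mod D$ combined with $\alpha|_D = 0$. Extending linearly in the first slot yields $\alpha(v) = -d\alpha(\chi(v))$ for every $v \in TM$, which is (II).

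With $\chi$ in hand, Proposition~\ref{th:UsualHol}(b) immediately provides a unique $\widetilde \omega \in [\omega]_D$ with $\iota_\chi \widetilde \Omega = 0$. Since $\chi$ is a nonzero scalar multiple of $\chi^\xi$ off of $D$ and vanishes on $D$, the condition $\iota_\chi \widetilde \Omega = 0$ is equivalent to $\widetilde \Omega(\chi^\xi) = 0$, giving the first claim. For the Lie-algebraic description, Theorem~\ref{th:AS} states that the Lie algebra of $\Hol^{\omega,D}(p_0)$ is spanned by $\{ \Omega_\chi^\wedge(h_p u, h_p v) : u, v \in T_{\pi(p)}M,\ p \in \mathscr{O}_{p_0}\}$, and the proof of that theorem identifies $\Omega_\chi$ with $\widetilde \Omega$ as $\Ad P$-valued forms on $M$, hence also at the level of equivariant horizontal forms on $P$, finishing the argument.

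The only step with any real content is the construction of the selector $\chi$ out of $\chi^\xi$; the key identity $d\alpha(\chi^\xi) \equiv -\alpha(\calR(\chi^\xi)) \pmod{\Gamma(\Ann D)^\perp}$ is what makes the equivalence between transversality to $\ker \calR$ and condition~(II) of Definition~\ref{def:Selector} precise. Once this is verified, both claims of the corollary follow as direct specializations of the general theorems, with no further bookkeeping required beyond noting that horizontality of $\widetilde \Omega^\wedge$ makes the spanning set insensitive to whether one lifts by $h$ or by $\widetilde h$.
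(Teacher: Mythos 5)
Your proposal is correct and follows essentially the same route as the paper: the paper's preceding discussion observes that, for a contact structure, choosing a selector is exactly choosing a two-vector field transverse to $\ker\calR$ (since $\bigwedge^2 D = \ker\calR \oplus \spn\{\chi^\xi\}$ and $\calR$ must be a left inverse to the selector by condition (II)), after which the corollary is a direct specialization of Proposition~\ref{th:UsualHol} and Theorem~\ref{th:AS}. Your explicit verification that $\chi := \tau \otimes \chi^\xi$ (with $\tau$ dual to a lift of $\calR(\chi^\xi)$) satisfies (I) and (II), via $d\alpha(\chi^\xi) = -\alpha(\widehat\zeta)$ for $\alpha \in \Gamma(\Ann D)$, is precisely the content the paper leaves implicit.
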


\section{Two problems on foliated manifolds} \label{sec:Applications}
\subsection{Totally geodesic foliations} \label{sec:TGF}
Let $M$ be an $n$-dimensional connected manifold and $V\subseteq TM$ an integrable subbundle of rank $\nu \leq n$. By Frobenius Theorem, this subbundle defines us a foliation $\calF$ of $M$ with leaves of dimension $\nu$. Let $D$ be a subbundle of $TM$ such that $TM = D \oplus V$. The issue we address below regards the existence of a Riemannian metric~$\tensorg$ on $M$ such that $D$ is the $\tensorg$-orthogonal complement to $V$ and $\calF$  is totally geodesic, i.e., the leaves of $\calF$ are totally geodesic submanifolds of $M$.

Let $\tensorg$ be a Riemannian metric on $M$ with corresponding Levi-Civita connection $\nabla^{\tensorg}$. A submanifold $F$ on $M$ is called \emph{totally geodesic} if the geodesics of $(F, \tensorg|_{TF})$ are also geodesics in $(M, \tensorg)$. Consider an integrable subbundle $V$ of $TM$ and denote by~$D$ its $\tensorg$-orthogonal complement. Let
$\pr_V$ and $\pr_D$ be the $\tensorg$-orthogonal projections onto~$V$ and~$D$, respectively.
The foliation $\calF$ of $V$ is totally geodesic if and only if
\[\II(Z_1, Z_2) := \pr_D \nabla_{\pr_V Z_1}^{\tensorg} \pr_V Z_2 =0, \qquad\text{for every } Z_1, Z_2 \in \Gamma(TM).\]
The vector-valued tensor $\II$ is symmetric and is called \emph{the second fundamental form} of the leaves of~$\calF$. Furthermore, for any vector field $X \in \Gamma(D)$ and $Z_1, Z_2 \in \Gamma(V)$, one must have
\[0=-2 \tensorg(X, \nabla_{Z_1}^{\tensorg} Z_2) = -2 \tensorg(X, \II(Z_1, Z_2)) = (\calL_X \pr_V^* \tensorg)(Z_1,Z_2).\]
Hence, the foliation $\calF$ is totally geodesic if and only if $ \calL_X \pr_V^* \tensorg = 0$ for every vector field $X$ with values in $D$.

Choose any affine connection $\nabla$ on $V$ such that for any $X \in \Gamma(D)$ and $Z \in \Gamma(V)$, we have
\begin{equation} \label{verticalC} \nabla_X Z = \pr_V [X,Z].\end{equation}
Such a connection is called \emph{vertical} and all such vertical connections are $D$-equivalent. Write $\tensorg = \pr_D^* \tensorg^{D} + \pr_V^* \tensorg^{V},$ with metric tensors $\tensorg^D$ and $\tensorg^V$ on $D$ and $V$ respectively. Then $\calF$ is totally geodesic if and only if
\begin{equation} \label{TGparallel} \nabla_v\tensorg^V =0, \qquad \text{for any } v \in D.\end{equation}
This gives us the following reformulation of our problem. Given the direct sum $TM = D \oplus V$ of subbundles such that $V$ is integrable, under what conditions does there exist a metric tensor $\tensorg^V$ on $V$ which is parallel in the directions of $D$ with respect to a connection $\nabla$ satisfying \eqref{verticalC}?

For any connection $\nabla$ on $V$ satisfying \eqref{verticalC}, denote the induced connection on $\Sym^2 V^*$ by the same symbol. For $x\in M$, let $\Hol^{\nabla,D}(x) \subseteq \GL(\Sym^2 V^*_x)$ be the holonomy group with respect to $\nabla$ at $x$. It is clear then that  there exists $\tensorg^V\in\Gamma(\Sym^2 V^*)$ satisfying~\eqref{TGparallel} if and only if
\[
a \cdot \tensorg^V|_x = \tensorg^V|_x, \qquad \text{for any } a \in \Hol^{\nabla,D}(x).
\]
Hence, if $\calF$ is totally geodesic then $\Hol^{\nabla,D}(x)$ admits a positive definite fixed point for any $x \in M$. 
As a consequence, if there exists a metric $\tensorg^V$ satisfying \eqref{TGparallel}, then there exists a positive definite element in $\Sym^2 V_x^*$ belonging to the kernel of every element in the Lie algebra $\mathfrak{h}$ of $\Hol^{\nabla,D}(x)$.

Assume moreover that $D$ is completely controllable. Then for any $x \in M$, there is a one-to-one correspondence between positive definite fixed points of $\Hol^{\nabla,D}(x)$ and all metrics $\tensorg^V \in \Gamma(\Sym^2 V^*)$ satisfying \eqref{TGparallel}, since any such fixed point can be extended uniquely to a metric tensor through parallel transport. If $\Hol^{\nabla,D}(x)$ is connected, then there is a one-to-one correspondence between such metrics and positive definite points in $\cap_{A \in \mathfrak{h}} \ker A$. In particular for the case when $D$ is bracket-generating and equiregular, let $R^{\nabla}_\chi $ be defined with respect to some selector $\chi$ of $D$ as in Section~\ref{sec:Affine}. Then we must have $R^{\nabla}_\chi(\newbullet, \newbullet) \tensorg^V =0$ for any metric $\tensorg^V$ satisfying \eqref{TGparallel}. Recall by Remark~\ref{re:connected} that $\Hol^{\nabla,D}$ is connected whenever $M$ is simply connected and $D$ is bracket-generating. Hence, if the latter two conditions are satisfied for respectively $M$ and $D$ and $R^\nabla_\chi =0$, then there is a one-to-one correspondence between positive definite elements in $\Sym^2 V^*$ and metrics satisfying \eqref{TGparallel}. We note that if $\mathbf{e} \in \Sym^2 V^*_x$, then
\[(R^\nabla_{\chi}(v_1,v_2) \mathbf{e})(w_1,w_2) = - \mathbf{e}(R^\nabla_\chi(v_1,v_2) w_1, w_2)  - \mathbf{e}( w_1, R^\nabla_\chi(v_1,v_2) w_2), \]
for any $v_j,w_j \in T_xM$, $j=1,2$, where the $R^\nabla_\chi$ on the right hand side is defined relative to $\nabla$ seen as a connection on $V$.

We summarize the findings of this section in the following theorem.
\begin{theorem}\label{th:TGF}
Let $V \subseteq TM$ be an integrable subbundle of $M$ and let $D$ be any subbundle such that $TM = D \oplus V$. Let $\nabla$ be any connection on $V$ satisfying \eqref{verticalC}.

Assume that there exists a Riemannian metric $\tensorg$ such that
\begin{equation} \label{TGg}
D\text{ is the $\tensorg$-orthogonal of $V$ and the foliation of $V$ is totally geodesic.}
\end{equation}
Then, for every $x \in M$, the group $\Hol^{\nabla, D}(x) \subseteq \GL(\Sym^2 V^*_x),$
has a positive definite fixed point.

Furthermore, assume that $D$ is completely controllable. Then $M$ has a Riemannian metric $\tensorg$ satisfying \eqref{TGg} if and only if there exists a point $x\in M$ such that $\Hol^{\nabla,D}(x) \subseteq \GL(\Sym^2 V^*_x)$ has a positive definite fixed point.
\end{theorem}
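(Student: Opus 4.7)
The strategy is to exploit the reformulation established in the paragraphs preceding the statement: the totally geodesic condition \eqref{TGg} with prescribed $\tensorg$-orthogonal complement $D$ is equivalent to the existence of a positive definite section $\tensorg^V \in \Gamma(\Sym^2 V^*)$ that is $\nabla$-parallel along every vector in $D$, i.e.\ satisfies \eqref{TGparallel}, where $\nabla$ is any vertical connection on $V$ satisfying \eqref{verticalC}. Granting this equivalence, the theorem reduces to the standard correspondence between parallel sections of an associated bundle and fixed points of the holonomy representation, only here applied to the \emph{horizontal} holonomy along $D$.

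For the necessity statement, I would set $\tensorg^V \colon= \tensorg|_V$ and observe that by \eqref{TGparallel} this section is preserved by parallel transport along every $D$-horizontal curve. In particular, for any $x \in M$ and any $D$-horizontal loop $\gamma$ based at $x$, the induced linear isomorphism $\mathcal{P}_\gamma(1) \in \GL(\Sym^2 V^*_x)$ fixes $\tensorg^V|_x$. Since $\tensorg^V|_x$ is positive definite on $V_x$, it is the required positive definite fixed point of $\Hol^{\nabla,D}(x)$ on $\Sym^2 V^*_x$. No further argument is needed here.

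For the sufficiency direction under complete controllability, I would fix a point $x_0 \in M$ together with a positive definite fixed point $\mathbf{e}_0 \in \Sym^2 V^*_{x_0}$ of $\Hol^{\nabla,D}(x_0)$, and define $\tensorg^V|_y \colon= \mathcal{P}_\gamma(1)\mathbf{e}_0$ for any $D$-horizontal curve $\gamma$ from $x_0$ to $y$. Existence of such a $\gamma$ is ensured by complete controllability; well-definedness follows because given two such curves $\gamma_1,\gamma_2$, the concatenation of $\gamma_1$ with the reverse of $\gamma_2$ is a $D$-horizontal loop at $x_0$, so $\mathcal{P}_{\gamma_2}(1)^{-1}\mathcal{P}_{\gamma_1}(1) \in \Hol^{\nabla,D}(x_0)$ fixes $\mathbf{e}_0$. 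Positive definiteness of $\tensorg^V|_y$ is preserved by the $\GL(V_y)$-action coming from parallel transport. Once $\tensorg^V$ is constructed on $V$, I extend it to a Riemannian metric on $M$ by setting $\tensorg \colon= \pr_D^* \tensorg^D + \pr_V^* \tensorg^V$ for any auxiliary Riemannian metric $\tensorg^D$ on $D$; this automatically makes $D$ the $\tensorg$-orthogonal complement of $V$, and $\tensorg^V$ satisfies \eqref{TGparallel} by construction, hence $\calF$ is totally geodesic.

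The main obstacle I expect is showing that the pointwise-defined section $\tensorg^V$ is actually smooth. The plan is to work locally: around an arbitrary $y \in M$, choose a $D$-horizontal curve $\gamma_0$ from $x_0$ to $y$, then build a smooth family of $D$-horizontal curves $\{\gamma_{y'}\}_{y'\in U}$ from $x_0$ to points $y'$ in a neighborhood $U$ of $y$ by concatenating $\gamma_0$ with short $D$-horizontal curves produced from a local $D$-frame on $U$. Smooth dependence of the parallel transport $\mathcal{P}_{\gamma_{y'}}(1)$ on the endpoint $y'$, combined with the already-proven well-definedness, gives the smoothness of $\tensorg^V$ on $U$. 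This concludes the construction and proves the equivalence.
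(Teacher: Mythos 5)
Your proposal is correct and follows essentially the same route as the paper: the theorem is precisely the summary of the preceding discussion, which reduces \eqref{TGg} to the parallelism condition \eqref{TGparallel} for $\tensorg^V$ and then invokes the correspondence between $\nabla$-parallel sections along $D$-horizontal curves and positive definite fixed points of $\Hol^{\nabla,D}(x)$, with complete controllability used exactly as you use it to extend a fixed point by parallel transport. Your explicit treatment of well-definedness and smoothness of the transported section fills in details the paper leaves implicit, but it is not a different argument.
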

We illustrate the difference between the case when $D$ is integrable and the one when $D$ is completely controllable by studying a specific framework. Let $V$ be an integrable subbundle of $TM$ of rank $\nu$. Assume that $M$ is equipped with a Riemannian metric $\tensorg$ and let $D$ be the orthogonal complement of $V$. Following~\cite{Rei59}, we say that $\calF$ is \emph{a Riemannian foliation} if
\begin{equation} \label{RiemFol} \calL_{Z} \pr_{D}^* \tensorg =0, \qquad \text{for any } Z \in \Gamma(V).\end{equation}
Riemannian foliations locally look like a \emph{Riemannian submersion}. Recall that a surjective submersion between two Riemannian manifolds $f\colon (M, \tensorg) \to (B, \tensorg^B)$ is called Riemannian if $\tensorg(v, w) = \tensorg^B(\pi_* v, \pi_* w)$ for any $v,w \in (\ker f_*)^\perp.$ If $\calF$ is a Riemannian foliation of $(M, \tensorg)$, any point $x \in M$ has a neighbourhood $U$ such that $B = U/\calF|_U$ is a well-defined manifold that can be given a metric $\tensorg^B$, making the quotient map $f\colon U \to B$ into a Riemannian submersion.

Let $M = B \times F$ be the product of two connected manifolds, $\calF$ be the foliation with leaves $\{ (b, F) \, : \, b \in B\}$, $V$ be the corresponding integrable subbundle, and $D$ be any subbundle such that $TM = D \oplus V$. Note that if $f\colon M \to B$ is the projection, then $f_*|_D\colon D \to TB$ is bijective on every fiber, meaning that curves in $B$ have $D$-horizontal lifts, at least for short time. We want to know if there exists a Riemannian metric $\tensorg = \pr_D^* \tensorg^D + \pr_V^* \tensorg^V$ such that the leaves of $\calF$ are totally geodesic. Since this question does not depend on $\tensorg^D$, we may choose $\tensorg^D = f^* \tensorg^B|_D$ for some Riemannian metric on $B$. This will make $\calF$ into a Riemannian foliation.

If $D = f^*TB$, we can choose any metric $\tensorg^V$ on $V$ and the foliation is totally geodesic. On the other hand, if $D$ is completely controllable, the choice of metric is more restrictive. Indeed, if $\calF$ is totally geodesic and $\tensorg$ is a complete metric, then the leaves of $\calF$ are homogeneous spaces. For any smooth curve $\gamma\colon [0,1] \to B$ consider the map $\phi_\gamma\colon \gamma(0) \times F \to \gamma(1) \times F$ sending a point $(\gamma(0), z_0)$ to the point $(\gamma(1), z_1)$ if the latter is the endpoint of the horizontal lift of $\gamma$ starting at $(\gamma(0),z_0)$. By \cite[Proposition 3.3]{Her60}, these maps are isometries. If $D$ is completely controllable, then the isometry group of $(b,F)$ must act transitively for any $b \in B.$

\begin{remark}
\begin{enumerate}[\rm (a)]
\item We observe that by reversing the role of $D$ and $V$, we may consider the following question. Given an integrable subbundle $V$ and complement $D$, when does there exist a Riemannian metric $\tensorg$ on $M$ such that $V$ and $D$ are orthogonal and such that the foliation $\calF$ of $V$ is Riemannian? Let $\nabla^\prime$ be any affine connection on $D$ that satisfies
\[\nabla_Z^\prime X = \pr_V [Z,X], \qquad \text{for any } Z \in \Gamma(V), Z \in \Gamma(D).\]
Then \eqref{RiemFol} can be reformulated as $\nabla_v^\prime \tensorg^D = 0$ for any $v \in V$ where $\tensorg^D = \tensorg|_D$. Hence, the foliation $\calF$ can only be made Riemannian with transversal subbundle $D$ if $\Hol^{\nabla^\prime,V}(x)$ has a positive definite fixed point for any $x \in M$.
\item In general, if $f\colon M \to B$ is a surjective submersion, $\calF = \{ f^{-1}(b) \, : \, b \in B\}$ is a Riemannian totally geodesic foliation and $\tensorg$ is a complete metric, then by~\cite{Her60} we know that the leaves of the foliation are isometric to some manifold~$F$. Furthermore, if $G$ is the isometry group of $F$, then there exist a principal $G$-bundle $P\to M$ (with action written on the left) such that
\begin{equation} \label{MPF} M = G\setminus(P \times F),\end{equation}
The quotient is here with respect to the diagonal action.
\item Even if $TM = D \oplus V$ with $V$ not integrable, we can still define a connection $\nabla$ as in \eqref{verticalC}. Studying when there exists a metric parallel with respect to $\nabla$ along $D$-horizontal curves still has applications to the heat flow of subelliptic operators when $V$ is not integrable, see \cite[Section~3.3 \& 3.8]{GrTh15a} and \cite[Section~6.5]{GrTh15b}. Theorem~\ref{th:TGF} is also applicable to this case.
\end{enumerate}
\end{remark}

\subsection{Submersions and principal bundles} \label{sec:PB}
Let $F \to M \stackrel{f}{\to} B$ be a fiber bundle over a manifold $B$, with the fiber $F$ being a connected manifold. Let $V = \ker f_*$ be of rank $\nu$ and let~$D$ be an Ehresmann connection on~$f$, i.e., a subbundle such that $TM = D \oplus V$. The foliation of $V$ is given by $\{ M_b := f^{-1}(b) \, \colon \, b\in B\}.$ We ask the following question: under what conditions does there exist a group action on $M$ rendering $f$ a principal bundle and $D$ a principal connection? In order to approach this question, we first look at its infinitesimal version, namely, when does there exist vector fields $Z_1, \dots, Z_\nu$ on $M$ satisfying
\begin{enumerate}[\rm (i)]
\item $V = \spn \{ Z_1, \dots, Z_\nu\}$;
\item for any $X \in \Gamma(D)$ and any $i =1, \dots, \nu$, $[Z_i, X]$ has values in $D$;
\item $\mathfrak{g} = \spn \left\{Z_i \, \colon \, i=1, \dots, \nu \right\}$ is a subalgebra of $\Gamma(TM)$.
\end{enumerate}
If there exists a group $G$ acting on the fibers of $f:M \to B$ such that both $f$ and $D$ are principal, we can obtain the desired vector fields above by defining $Z_i|_x = \sigma(A_i)$ for some basis $A_1, \dots, A_\nu$ of the Lie algebra of $G$. The map $\sigma$ is here defined as in \eqref{SigmaVF}. Conversely, let $Z_1, \dots, Z_\nu$ be vector fields satisfying Item (i), (ii) and (iii). If these vector fields are also complete and if $G$ is the simply connected Lie group of $\mathfrak{g}$, we get a group action $G \times M \to M$. By the definition of~$\mathfrak{g}$, this group action preserves the fibers of $f$ and is \emph{locally free}, i.e. the stabilizers $G_x = \{ a \in G \, \colon \, a \cdot x = x \}$ are discrete groups for any $x \in M$. Because of this, for any $x \in M$ with $f(x) = b$, the map $G \to M_b$, $a \mapsto a \cdot x$, is a surjective local diffeomorphism. This map must be a diffeomorphism if $M_b$ is simply connected, that is, if $F$ is simply connected. Hence, if $F$ is simply connected, the group $G$ acts freely and transitively on every fiber and $f$ is a principal bundle with principal connection $D$.

Define $S_0(D)$ be the space of \emph{infinitesimal vertical symmetries}
i.e. the space of all $Z\in\Gamma(TM)$ such that $f_*Z=0$ and such that $[Z,X]$ has values in $D$ for all $X\in\Gamma(D)$. This is a sub-algebra of $\Gamma(TM)$ by the Jacobi identity. If~$\mathfrak{g}$ is as in Item~(iii), it will be an $\nu$-dimensional subalgebra of $S_0(D)$. Note that if $\nabla$ is any connection on $V$ satisfying \eqref{verticalC}, then $\nabla_v Z = 0$ for  any $Z \in S_0(D)$ and $v \in D$. In particular, this must hold for the vector fields $Z_1, \dots, Z_\nu$. If $D$ is completely controllable, $S_0(D)$ must be of dimension $\leq \nu$ since every element $Z$ is uniquely determined by its value at one point through parallel transport. Hence, it follows that the requirement of Item (iii) is superfluous, since if there exists vector fields $Z_1, \dots, Z_\nu$ satisfying Item (i) and (ii), then $\mathfrak{g} = \spn \{ Z_1, \dots, Z_\nu\} = S_0(D)$. 

We conclude the following from the above discussion.

\begin{theorem}\label{th:SUB}
Let $F \to M \stackrel{f}{\to} B$ be a fiber bundle with connected fiber $F$, $V : = \ker f_*$ the vertical bundle and $D$ an Ehresmann connection on $f$. Let $\nabla$ be any connection on $V$ satisfying \eqref{verticalC}.

If there exists a group action of some Lie group $G$ on $M$ such that $f$ becomes a principal bundle and $D$ becomes a principal Ehresmann connection, then, for every $x\in M$, one has $\Hol^{\nabla,D}(x) = \{\mathrm{Id}\} \subseteq \GL(V_x)$, where $\mathrm{Id}$ denotes the identity mapping.

Furthermore, assume that $D$ is completely controllable and that $F$ is compact and simply connected. Then $f$ can be given the structure of a principal bundle with principal connection $D$ if and only if there exists a point $ x\in M$ such that $\Hol^{\nabla,D}(x) = \{\mathrm{Id}\} \subseteq \GL(V_x)$.
\end{theorem}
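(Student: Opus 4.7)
My plan is to prove the necessary direction directly from the definition of a principal connection, and then to prove the sufficient direction by constructing vertical vector fields via parallel transport, verifying they generate a Lie algebra, integrating them to a global action, and finally checking that this action makes $f$ principal with $D$ as a principal connection.

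For the necessary direction, assume that $f\colon M\to B$ is already a principal $G$-bundle with principal connection $D$, and let $A_1,\dots,A_\nu$ be a basis of $\Lie G$. The fundamental vector fields $Z_i=\sigma(A_i)$ span $V$ pointwise. Because $D$ is $G$-invariant, the flow of each $Z_i$ preserves $D$, i.e.\ $[Z_i,X]\in\Gamma(D)$ for every $X\in\Gamma(D)$. Condition \eqref{verticalC} then gives $\nabla_X Z_i=\pr_V[X,Z_i]=0$, so $Z_1,\dots,Z_\nu$ form a global $\nabla$-parallel frame of $V$ along $D$-horizontal curves. Hence $\Hol^{\nabla,D}(x)=\{\mathrm{Id}\}$ for every $x\in M$.

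For the sufficient direction, fix $x_0\in M$ with trivial horizontal holonomy and pick a basis $v_1,\dots,v_\nu$ of $V_{x_0}$. Since $D$ is completely controllable, any other point of $M$ is reachable from $x_0$ by a $D$-horizontal curve, and the triviality of $\Hol^{\nabla,D}(x_0)$ makes the result of parallel transport independent of the chosen path; this produces well-defined vector fields $Z_1,\dots,Z_\nu\in\Gamma(V)$ with $\nabla_X Z_i=0$ for every $X\in\Gamma(D)$, i.e.\ $Z_i\in S_0(D)$. The sub-algebra $S_0(D)$ of $\Gamma(TM)$ has dimension at most $\nu$ because, under complete controllability, an element of $S_0(D)$ is determined by its value at a single point through parallel transport; hence $\mathfrak{g}:=\mathrm{span}\{Z_1,\dots,Z_\nu\}=S_0(D)$ is a $\nu$-dimensional Lie subalgebra, fulfilling items (i)--(iii) from the preceding discussion.

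It remains to integrate $\mathfrak{g}$ into a group action with the desired properties. Compactness of $F$ implies that each $Z_i$, being tangent to the fibers, is complete, so by the Palais theorem the simply connected Lie group $G$ with Lie algebra $\mathfrak{g}$ acts on $M$. This action preserves fibers of $f$ (because $Z_i\in\Gamma(V)$) and is locally free (because the $Z_i$ span $V$ everywhere), so on each fiber $M_b\cong F$ the orbit map $G\to M_b,\ a\mapsto a\cdot x$, is a surjective local diffeomorphism, and in fact a covering since $M_b$ is connected. The simple connectivity of $F$ then forces this covering to be a diffeomorphism, so $G$ acts freely and transitively on every fiber; hence $f$ is a principal $G$-bundle. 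Finally, the invariance condition $[Z_i,X]\in\Gamma(D)$ for all $X\in\Gamma(D)$ is precisely the infinitesimal statement that the flows of the $Z_i$ preserve the subbundle $D$, so $D$ is $G$-invariant, i.e.\ a principal Ehresmann connection. The main delicate point is the integration step: the completeness of the $Z_i$ (resolved by the compactness of $F$) and the passage from the locally free action to a free transitive action on each fiber (resolved by the simple connectivity of $F$) are exactly the assumptions the theorem imposes.
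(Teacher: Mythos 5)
Your proposal is correct and follows essentially the same route as the paper, which proves Theorem~\ref{th:SUB} via the discussion preceding it: fundamental vector fields $\sigma(A_i)$ give a $\nabla$-parallel frame of $V$ along $D$-horizontal curves for the necessary direction, and for the converse one builds $Z_1,\dots,Z_\nu\in S_0(D)$ by parallel transport, notes $\dim S_0(D)\le\nu$ under complete controllability so that the span is automatically a Lie algebra, and integrates it using compactness (completeness) and simple connectivity of $F$ (to upgrade the locally free transitive fiber action to a free one). The only point treated as implicitly as in the paper is the smoothness of the vector fields obtained by parallel transport, so no substantive difference remains.
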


For a related result in a special case, cf. \cite{CGK15}.

Assume that $D$ is bracket generating and equiregular with a selector~$\chi$. Then $R^{\nabla}_\chi \equiv 0$ is a necessary condition for $D$ to be a principal connection. If $F$ is compact and simply connected and $M$ is simply connected as well, then the condition $R^{\nabla}_\chi\equiv 0$ condition is also sufficient. We emphasize that the only reason for the assumption that $F$ is compact, is to ensure that the vector fields $Z_1, \dots, Z_\nu$ mentioned above are complete.

\begin{remark}
\label{rem:srgeom}
Theorem~\ref{th:SUB} has applications to sub-Riemannian geometry since the Lie algebra $S_0(D)$ often appear as a subalgebra of the Lie algebra of infinitesimal isometries. Recall first that an isometry $\phi$ of a sub-Riemannian manifold $(M, D, \tensorh)$ is a diffeomorphism satisfying $\phi_* D \subseteq D$ with $\tensorh(\phi_* v, \phi_* w) = \tensorh( v,  w)$ for any $v,w \in D$. An infinitesimal isometry is a vector field $Z$ such that, for every $X \in \Gamma(D)$, the vector field $[Z,X]$ takes values in $D$ and $Z \tensorh(X,X) = 2 \tensorh(X, [Z,X])$. Let $F \to M \stackrel{f}{\to} B$ be a fiber bundle over a Riemannian manifold $(B, \tensorg^B)$ and $D$ be an Ehresmann connection on $M$. Define a metric on $D$ by $\tensorh = f^* \tensorg^B |_D$ and set $V = \ker f_*$ for the vertical bundle with corresponding foliation $\calF$. Let $\tensorg$ be any metric on $M$ such that $\tensorg|_D = \tensorh$ and $V$ is the $\tensorg$-orthogonal complement of $D$. By definition, $f$ is a Riemannian submersion, $\calF$ a Riemannian foliation and hence, for every $Z \in \Gamma(V)$,
\[Z \tensorh(X, X) = 2 \tensorh(X, \pr_D [Z,X]) \quad \text{ for any } X \in \Gamma(D).\]
In order for $Z$ to be an infinitesimal sub-Riemannian isometry, we must also have $\pr_V [X,Z] = 0$ for any $X \in \Gamma(D)$. In other words, if~$\nabla$ is a connection on~$V$ satisfying \eqref{verticalC}, then $Z$ must be parallel along $D$-horizontal curves. If $\Hol^{\nabla,D}(x) = \{ \mathrm{Id} \} \subseteq \GL(V_x)$, then $V$ has a basis of infinitesimal isometries.
\end{remark}

\subsection{Computation of the curvature}
\label{sec:curvature}
The above problems involve the use of connections corresponding to an affine connection $\nabla$ satisfying \eqref{verticalC}. In order to apply the results of Section~\ref{sec:Affine}, one must choose a selector and appropriate affine connections $\nabla$. We next give two options for the choice of $\nabla$ and compute their curvatures. We end with a remark on how to compute $R^\nabla_\chi$ from $R^\nabla$.

\subsubsection{Connection appearing from the choice of an auxiliary metric}
Choose an auxiliary metric $\widetilde \tensorg$ on $M$ with $D$ and $V$ orthogonal and define
\begin{equation} \label{nablaV} \nabla_Y Z := \pr_V [\pr_D Y, Z] + \pr_V \nabla_{\pr_V Y}^{\widetilde \tensorg} Z, \qquad Y \in \Gamma(TM), Z \in \Gamma(V).\end{equation}
To compute the curvature of this connection, we consider the \emph{curvature of} $D$ with respect to the complement~$V$, i.e, the vector-valued two-form $\mathcal{R}$ on $M$ defined by
\[\mathcal{R}(X,Y) = \pr_V [\pr_D X, \pr_D Y], \qquad X, Y \in \Gamma(TM).\]
It is a well-defined two-form since it is skew-symmetric and $C^\infty(M)$-linear in both arguments. We extend the connection $\nabla$ to a connection $\rnabla$ on $M$ by setting
\begin{linenomath}
\begin{align} \label{rnabla} \rnabla_Y X & :=  \pr_V [\pr_D Y, \pr_V X] + \pr_V \nabla_{\pr_V Y}^{\widetilde \tensorg} \pr_V X \\ \nonumber
& \quad +  \pr_D [\pr_V Y, \pr_D X] + \pr_D \nabla_{\pr_D Y}^{\widetilde \tensorg} \pr_D X. \end{align}
\end{linenomath}
\begin{proposition} \label{prop:Rnabla}
Endow $M$ with some Riemannian metric $\widetilde \tensorg$ and consider the connection $\nabla$ on $V$ given by \eqref{nablaV}. Let $\II$ be the second fundamental form with respect to $\widetilde \tensorg$. For any $X,Y \in \Gamma(TM)$ and $Z \in \Gamma(V)$, the curvature of $\nabla$ is given by
\[
 R^{\nabla}(X,Y)= R^{\calF}(\pr_V X, \pr_V Y)-(\rnabla_{\centerdot} \calR)(X,Y)
 + \mathscr{S}(X,Y) - \mathscr{S}(Y,X),
 \]
where $R^{\calF}$ is the curvature of the leaves $F$ of the foliation $\calF$ with respect to
$\widetilde \tensorg|_F$ and $\mathscr{S}(X,Y)$ is the unique endomorphism satisfying the following: for every $Z_1,Z_2\in \Gamma(TM)$,
\[\widetilde \tensorg(\mathscr{S}(X,Y) Z_1, Z_2) = \widetilde \tensorg(X, (\rnabla_{Z_2} \II)(Y,Z_1) - (\rnabla_{\pr_V Y} \II)(Z_1,Z_2) - (\rnabla_{Z_1} \II)(Y, Z_2)).\]
\end{proposition}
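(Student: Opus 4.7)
The plan is to expand $R^{\nabla}(X,Y)Z$ for $Z \in \Gamma(V)$ and exploit $C^{\infty}(M)$-bilinearity of the curvature tensor in $X$ and $Y$ to reduce to the four cases determined by the splittings $X = \pr_D X + \pr_V X$ and $Y = \pr_D Y + \pr_V Y$. Each of these four combinations will be identified with one of the terms on the right-hand side of the claimed formula.

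First I would treat the case $X, Y \in \Gamma(V)$. Here definition \eqref{nablaV} reduces to $\nabla_X Z = \pr_V \nabla^{\widetilde \tensorg}_X Z$, which is exactly the Levi-Civita connection of $(F, \widetilde \tensorg|_F)$ along the leaf $F$ of $\calF$ through the base point, the second fundamental form being precisely the discarded horizontal piece. Consequently this contribution equals $R^{\calF}(\pr_V X, \pr_V Y)Z$, producing the first term on the right-hand side.

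Next I would tackle the purely horizontal case $X, Y \in \Gamma(D)$, where $\nabla_X Z = \pr_V [X, Z]$. The Jacobi identity $[X,[Y,Z]] - [Y,[X,Z]] = [[X,Y], Z]$, combined with the splitting $[X,Y] = \pr_D[X,Y] + \calR(X,Y)$, makes the $\pr_D[X,Y]$ contribution cancel against the piece of $\nabla_{[X,Y]}Z$ coming from the horizontal part of $[X,Y]$, leaving essentially $\pr_V[\calR(X,Y), Z] - \pr_V \nabla^{\widetilde \tensorg}_{\calR(X,Y)} Z$. The torsion-free identity $[\calR(X,Y), Z] = \nabla^{\widetilde \tensorg}_{\calR(X,Y)} Z - \nabla^{\widetilde \tensorg}_Z \calR(X,Y)$ then collapses this to $-\pr_V \nabla^{\widetilde \tensorg}_Z \calR(X,Y) = -\rnabla_Z \calR(X,Y)$, since $\calR(X,Y) \in \Gamma(V)$. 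The residual projection terms $\pr_V[X, \pr_D[Y,Z]] - \pr_V[Y, \pr_D[X,Z]]$ reorganize, via the definition \eqref{rnabla} of $\rnabla$, into $\calR(\rnabla_Z X, Y) + \calR(X, \rnabla_Z Y)$, yielding the full tensorial expression $-(\rnabla_{\centerdot}\calR)(X,Y)Z$.

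For the two mixed cases I would substitute both pieces of \eqref{nablaV} and carefully track the terms involving $\II$. Pairing the resulting vector against an auxiliary $Z_2 \in \Gamma(TM)$ through $\widetilde \tensorg$, and invoking the symmetry of $\II$ along with the Gauss--Weingarten relation for $\nabla^{\widetilde \tensorg}$, one recognises the three-term combination $(\rnabla_{Z_2}\II)(Y,Z_1) - (\rnabla_{\pr_V Y}\II)(Z_1, Z_2) - (\rnabla_{Z_1}\II)(Y, Z_2)$ contracted with $X$, which is by definition $\widetilde \tensorg(\mathscr{S}(X,Y) Z_1, Z_2)$. The swapped mixed case produces $-\mathscr{S}(Y,X)$ by the same argument, and assembling the four cases gives the proposition. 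I expect the main obstacle to lie in the bookkeeping for case HH (tracking projections of iterated brackets and their horizontal corrections) and in correctly matching the mixed-case output to the somewhat intricate defining expression of $\mathscr{S}$.
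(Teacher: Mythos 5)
Your reduction by $C^\infty(M)$-bilinearity to the cases $VV$, $DD$, and mixed is the same skeleton as the paper's proof (which works with the extension $\rnabla$, notes $R^{\nabla}(X,Y)Z=R^{\rnabla}(X,Y)Z$ for $Z\in\Gamma(V)$, and treats three cases). The $VV$ case is handled identically. In the $DD$ case your direct Jacobi-identity computation is correct and does reproduce $-(\rnabla_Z\calR)(X,Y)$; the paper instead invokes the first Bianchi identity for a connection with torsion $T^{\rnabla}=-\calR$, which is the same manipulation packaged differently, so no complaint there.

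The gap is in the mixed case, which is where the real content of the proposition sits. Note that $\widetilde\tensorg(\mathscr{S}(X,Y)Z_1,Z_2)$ contains the term $(\rnabla_{Z_2}\II)(Y,Z_1)$, a covariant derivative of $\II$ in the direction of the \emph{auxiliary} vector $Z_2$ that enters only through the metric pairing. No amount of ``substituting both pieces of \eqref{nablaV} and tracking terms'' in $\nabla_X\nabla_YZ_1-\nabla_Y\nabla_XZ_1-\nabla_{[X,Y]}Z_1$ can produce a derivative in the $Z_2$ direction, since $Z_2$ never appears in that expression; and the Gauss--Weingarten relation you invoke is not the mechanism that creates it. What is actually needed (and what the paper does) is: (i) the first Bianchi identity applied in the mixed case, which yields the symmetry $R^{\rnabla}(X,Y)Z=R^{\rnabla}(X,Z)Y$ for $X\in\Gamma(D)$ and $Y,Z\in\Gamma(V)$, so that everything reduces by polarization to computing $R^{\rnabla}(X,Y)Y$; and (ii) the non-metricity of $\rnabla$, namely $(\rnabla_X\widetilde\tensorg)(Z,Z)=-2\widetilde\tensorg(X,\II(Z,Z))$ and $(\rnabla_Y\widetilde\tensorg)(Z,Z)=0$, which gives $(R^{\rnabla}(X,Y)\widetilde\tensorg)(Z_1,Z_2)=2\widetilde\tensorg(X,(\rnabla_Y\II)(Z_1,Z_2))$ and lets one trade $\widetilde\tensorg(R^{\rnabla}(X,Y)Z_1,Z_2)$ for $-\widetilde\tensorg(R^{\rnabla}(X,Y)Z_2,Z_1)$ up to $\rnabla\II$ terms. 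It is precisely this integration-by-parts step, combined with the symmetry from (i), that moves a derivative onto the $Z_2$ slot and produces the three-term expression defining $\mathscr{S}$. Without identifying this mechanism, your plan for the mixed case is not a proof but a restatement of the claim.
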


\begin{proof}
Since $\rnabla$ is a direct sum of a connection on $D$ and $V$, we have $R^{\nabla}(X,Y) Z = R^{\rnabla}(X,Y) Z$ for any $X,Y \in \Gamma(TM)$, $V \in \Gamma(V)$. Hence, we need to compute $R^{\rnabla}(X,Y) Z$ for $Z$ taking values in~$V$.

Let us first consider the case when $X$ and $Y$ both take values in $V$. Let $F$ be a leaf of $\calF$. Observe that $\rnabla_Y Z$ is equal to $\nabla^{\widetilde \tensorg|_F}_{X|_F} Z|_F $ on $F$, the latter connection being the Levi-Civita connection of $\widetilde \tensorg|_F$. Hence $R^{\rnabla}(X,Y)Z = R^{\calF}(X,Y)Z$.

If both $X$ and $Y$ takes values in $D$, then by the first Bianchi identity
\begin{linenomath}
\begin{align*}
& R^{\rnabla}(X,Y) Z = \pr_V R^{\rnabla}(X,Y) Z = \pr_V \circlearrowright R^{\rnabla}(X,Y) Z \\
& = \pr_V \circlearrowright T^{\rnabla}( T^{\rnabla}(X,Y), Z) + \pr_V \circlearrowright  (\rnabla_X T^{\rnabla})(Y,Z),
\end{align*}
\end{linenomath}
with $\circlearrowright$ denoting the cyclic sum and $T^{\rnabla}$ denoting the torsion tensor of $\rnabla$. From the definition of $\rnabla$, it is simple to verify that $T^{\rnabla} = - \calR$. Since $\calR$ vanishes on $V$, we obtain
\[R^{\rnabla}(X,Y) Z = - (\rnabla_Z \calR)(X,Y).\]

For the last part, we consider the case when $X$ takes values in $D$ and $Y$ in $V$ respectively. Observe again from the first Bianchi identity, that one has
\[0 = \pr_V \circlearrowright R^{\rnabla}(X,Y) Z = R^{\rnabla}(X,Y)Z - R^{\rnabla}(X,Z)Y.\]
Hence, we only need to compute $R^{\rnabla}(X,Y)Y$ to derive the result. From the definition of $\rnabla$, observe that for any vector field $Z$ with values in $V$, we have
\[(\rnabla_X \widetilde \tensorg)(Z,Z) = - 2 \widetilde \tensorg(X,\II(Z,Z)), \qquad (\rnabla_Y \widetilde \tensorg)(Z,Z) = 0.\]
It follows that for any vector field $Z_1, Z_2$ taking values in $V$, we have
\begin{eqnarray*}
\widetilde\tensorg(R(X,Y)Z_1, Z_2) & =& - \widetilde \tensorg(R^{\rnabla}(X,Y) Z_2, Z_1) - (R^{\rnabla}(X,Y) \widetilde \tensorg)(Z_1, Z_2), \\
(R^{\rnabla}(X,Y)\widetilde \tensorg)(Z_1, Z_2) & = & 2 \widetilde \tensorg(X, (\rnabla_Y \II)(Z_1, Z_2)).
\end{eqnarray*}
This leads to the conclusion that
\begin{linenomath}
\begin{align*}
& \widetilde \tensorg(R^{\rnabla}(X,Y)Y,Z) = - \widetilde \tensorg( R^{\rnabla}(X,Y) Z, Y) - 2 \widetilde \tensorg(X, (\rnabla_Y \II)(Y,Z)) \\
& = - \widetilde \tensorg( R^{\rnabla} (X,Z) Y, Y) - 2 \widetilde \tensorg(X, (\rnabla_Y \II)(Y,Z)) \\
& = \widetilde \tensorg(X , (\rnabla_Z \II)( Y, Y)) - 2 \widetilde \tensorg(X, (\rnabla_Y \II)(Y,Z)).
\end{align*}
\end{linenomath}
As a result, we have
\[R^{\rnabla}(X,Y) Z = \sharp \widetilde \tensorg(X, (\rnabla_{\centerdot} \II)(Y,Z) - (\rnabla_{Y} \II)(Z,\newbullet) - (\rnabla_Z \II)(Y, \newbullet)) = \mathscr{S}(X,Y)Z.\]
\end{proof}

\subsubsection{Connection appearing from a global basis}
Assume that $V$ admits a global basis of vector fields $Z_1, \dots Z_\nu$. Let $\tau_1, \dots, \tau_\nu$ be the corresponding dual one-forms, i.e., they vanish on $D$ and satisfy $\tau_i(Z_j) = \delta_{ij}$ for every $1\leq i,j\leq \nu$. Define the connection $\nabla$ by
\[\nabla_X Z_j := \pr_V [\pr_D X,Z_j],\]
for any $j =1, \dots, \nu$. It is then well-defined for any $Z \in \Gamma(V)$ through the Leibniz property, i.e.,
\begin{equation} \label{Zconnection} \nabla_X Z = \pr_V [\pr_D X, Z] +
\sum_{j=1}^\nu ((\pr_V X)\tau_i(Z)) Z_i.
\end{equation}

\begin{proposition} \label{prop:FlatNabla}
Define a $\gl(V)$-valued one-form $\alpha$ by
\begin{equation} \label{AlphaEndomorphism} \alpha(X) Z_j = \sum_{i=1}^n \alpha_{ij}(X) Z_i, \qquad \alpha_{ij}= \pr_D\calL_{Z_i} \tau_j.
\end{equation}
The curvature $R^\nabla$ of the connection $\nabla$ defined in \eqref{Zconnection} is given by
\[R^\nabla = L^\nabla \alpha - [\alpha, \alpha].\]
\end{proposition}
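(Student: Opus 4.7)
\medskip

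\noindent\textbf{Proof plan.} The strategy is to realize $\nabla$ as a perturbation of a flat reference connection and then apply the standard formula for how curvature changes under such a perturbation. Since the global frame $Z_1,\dots,Z_\nu$ trivializes $V$, I would first define the flat connection $\nabla^0$ on $V$ by declaring each $Z_i$ to be parallel, i.e.\ $\nabla^0_X Z_i = 0$ for all $X$ and $i$. Equivalently, $\nabla^0_X Z = \sum_j (X\tau_j(Z))\,Z_j$, so $\nabla^0$ is manifestly flat: $R^{\nabla^0}=0$.

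Next, I would check that $\nabla - \nabla^0 = \alpha$ as $\gl(V)$-valued one-forms. Using \eqref{Zconnection} and the Leibniz rule for $\nabla^0$, one has $(\nabla_X - \nabla^0_X)Z_i = \pr_V[\pr_D X, Z_i]$. Writing this in the frame,
\[
\pr_V[\pr_D X, Z_i] \;=\; \sum_j \tau_j([\pr_D X, Z_i])\,Z_j,
\]
and invoking Cartan's formula together with $\tau_j(Z_i)=\delta_{ij}$ gives $\mathcal{L}_{Z_i}\tau_j = \iota_{Z_i}d\tau_j$, hence
\[
(\mathcal{L}_{Z_i}\tau_j)(\pr_D X) \;=\; d\tau_j(Z_i,\pr_D X) \;=\; -\tau_j([Z_i,\pr_D X]) \;=\; \tau_j([\pr_D X,Z_i]),
\]
where one uses that $\tau_j$ vanishes on $D$ and is constant on the $Z_i$. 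This identifies $(\nabla - \nabla^0)(X)Z_i = \sum_j \alpha_{ij}(X) Z_j = \alpha(X)Z_i$, as required.

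The final step is the standard curvature comparison: for any two connections $\nabla^0$ and $\nabla = \nabla^0 + \alpha$ on the same vector bundle, a direct expansion of $R^\nabla(X,Y)Z$ yields
\[
R^\nabla \;=\; R^{\nabla^0} + d^{\nabla^0}\alpha + \tfrac{1}{2}[\alpha,\alpha].
\]
Since $R^{\nabla^0}=0$, this reduces to $R^\nabla = d^{\nabla^0}\alpha + \tfrac{1}{2}[\alpha,\alpha]$. To re-express this in terms of $d^\nabla$, I would use that the induced connection on $\gl(V)$ also changes by $[\alpha,\cdot\,]$, which gives the identity $d^\nabla \alpha = d^{\nabla^0}\alpha + [\alpha,\alpha]$. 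Substituting produces
\[
R^\nabla \;=\; d^\nabla\alpha - \tfrac{1}{2}[\alpha,\alpha] \;=\; \bigl(d^\nabla\alpha + \tfrac{1}{2}[\alpha,\alpha]\bigr) - [\alpha,\alpha] \;=\; L^\nabla\alpha - [\alpha,\alpha],
\]
which is the claimed formula.

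\medskip

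\noindent\textbf{Main obstacle.} Conceptually the argument is short; the only real care needed is bookkeeping of indices and sign conventions, in particular making sure that the transpose-like placement of $i,j$ in $\alpha(X)Z_i = \sum_j \alpha_{ij}(X)Z_j$ matches the one forced by Cartan's formula, and that the bracket convention $[\alpha,\alpha](X,Y) = 2[\alpha(X),\alpha(Y)]$ is used consistently when converting between $d^{\nabla^0}\alpha$ and $d^\nabla\alpha$.
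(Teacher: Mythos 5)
Your proof is correct and follows essentially the same route as the paper: both compare $\nabla$ with the flat frame connection determined by $\nabla^0 Z_i = 0$ and combine the two identities $R^{\nabla} - R^{\nabla^0} = L^{\nabla^0}\alpha$ and $(d^{\nabla} - d^{\nabla^0})\alpha = [\alpha,\alpha]$. Your explicit check via Cartan's formula that $\nabla - \nabla^0 = \alpha$, with the index placement $\alpha(X)Z_i = \sum_j \alpha_{ij}(X)Z_j$ (which is indeed the one forced by the computation, whereas \eqref{AlphaEndomorphism} as printed states the transposed convention), fills in a step the paper leaves implicit.
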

We recall that $[\alpha, \alpha](X,Y) = 2 \alpha(X) \alpha(Y) - 2\alpha(Y)\alpha(X).$
\begin{proof}
Similarly to the proof of Theorem~\ref{th:UsualHol}~(b), we can show that for any pair of connections $\nabla$ and $\widetilde \nabla$ such that $\nabla = \widetilde \nabla + \alpha$, $\alpha \in \Gamma(T^*M \otimes \gl(V))$, one has
\[R^{\nabla} - R^{\widetilde \nabla} = L^{\widetilde \nabla} \alpha, \qquad (L^{\nabla} - L^{\widetilde \nabla}) \alpha = (d^{\nabla} - d^{\widetilde \nabla}) \alpha = [\alpha, \alpha],\]
so $R^{\nabla} - R^{\widetilde \nabla} = L^{\nabla} \alpha - [\alpha, \alpha]$. Choose $\widetilde \nabla$ as the connection on $V$ determined by $\widetilde \nabla Z_i = 0$ and $\alpha$ as in \eqref{AlphaEndomorphism} to get the result.
\end{proof}

\subsubsection{Computing $R^\nabla_\chi$}

Assume that~$D$ is bracket-generating and equiregular of step~$r$. Define $\eta^0 = R^{\nabla}$ as a $\mathfrak{gl}(V)$-valued two-form and for $k=0, \dots, r-2$, define $\eta^{k+1} = L^\nabla \iota_\chi \eta^k$ for some selector $\chi$ of $D$. Let $\rnabla$ be any connection extending $\nabla$, with torsion $T^{\rnabla}$. We then have
\begin{linenomath}
\begin{align} \label{EtaK} \eta^{k+1}(X,Y) & = (\rnabla_X \iota_\chi \eta^k)(Y) - (\rnabla_Y \iota_\chi \eta^k)(X) + \eta^k(\chi(T^{\rnabla}(X,Y))) \\ \nonumber
& + \eta^k(\chi(X)) \eta^k(\chi(Y))  - \eta^k(\chi(Y)) \eta^k(\chi(X)) , \end{align}
\end{linenomath}
where $(\rnabla_X \iota_\chi \eta^k)(Y)Z = \rnabla_X \eta^k(\chi(Y))Z - \eta^k(\chi(\rnabla_X Y))Z - \eta^k(\chi(Y)) \rnabla_X Z$. It follows that if $R^\nabla_\chi$ is defined as in Section~\ref{sec:Affine},
then
\[R^{\nabla}_{\chi} = \sum_{j=0}^{r-1} \binom{r-1}{j} \eta^j.\]

\section{Examples}\label{sec:examples}
\subsection{One-dimensional foliations}
Let $M$ be a connected, simply connected manifold with a foliation $\calF$ corresponding to an integrable one-dimensional subbundle $V$. Let $D$ be any subbundle that is transverse to $V$. We assume that $V$ is orientable, i.e., $V = \spn \{ Z\}$ for some vector field $Z$. It follows that $\Ann(D)$, the subbundle of $T^*M$ consisting of covectors vanishing on $D$, is oriented as well, and we write $\Ann(D) = \spn\{ \tau\}$.

\begin{proposition} \label{prop:Zfoliation}
Let $\tau$ be a non-vanishing one-form on a connected, simply connected manifold $M$. Define $D = \ker \tau$ and assume that $d\tau |_{\bigwedge^2 D}$ is non-vanishing at any point. Let $Z \in \Gamma(TM)$ and $\chi \in \Gamma(\bigwedge^2 D)$ be respectively a vector field and a two-vector field such that
\[\tau(Z) = 1, \qquad d\tau(\chi) = -1.\]
Consider the operator $d_{ \tau \otimes \chi} = d (\id + \iota_{\chi \otimes \tau} d)$ on one-forms.
Then there exists a Riemannian metric $\tensorg$ on $M$ such that the foliation tangent to $Z$ is totally geodesic and $D$ is orthogonal to $V = \mathrm{span} \,\{ Z\}$ if and only if
\begin{equation} \label{1dimdchi} d_{\tau \otimes \chi} \calL_Z \tau = 0.\end{equation}
\end{proposition}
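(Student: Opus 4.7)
The plan is to apply Theorem~\ref{th:TGF} and Theorem~\ref{th:affineAS} to reduce the existence of $\tensorg$ to a closedness statement that matches \eqref{1dimdchi}, using crucially that $V$ has rank one.

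First I would verify that $\sigma\colon v\mapsto\tau(v)\chi$ is a selector of $D$ in the sense of Definition~\ref{def:Selector}, so that Lemma~\ref{lemma:Selector}(d) applies with the associated operator equal to $d_{\tau\otimes\chi}$. Since $d\tau|_{\bigwedge^2 D}$ is nowhere vanishing, $D$ is bracket-generating of step $r=2$, so $D^1=D$ and $D^2=TM$. Item~(I) is immediate from $\tau|_D=0$ and $\chi\in\Gamma(\bigwedge^2 D)$; for Item~(II), given $\alpha=f\tau\in\Gamma(\Ann D)$ and $v\in TM$, a short computation using $\tau|_D=0$ and $d\tau(\chi)=-1$ yields
\[
-d\alpha(\sigma(v))=-\tau(v)\bigl((df\wedge\tau)(\chi)+f\,d\tau(\chi)\bigr)=f\tau(v)=\alpha(v).
\]
Writing $\nabla_X Z=\omega(X)Z$ defines a one-form $\omega$ on $M$; the condition \eqref{verticalC} together with $\pr_V W=\tau(W)Z$ gives $\omega(X)=\tau([X,Z])$ for $X\in\Gamma(D)$, and Cartan's formula (using $\tau(Z)=1$ and $\tau|_D=0$) then yields $\omega|_D=(\calL_Z\tau)|_D$.

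Because $V$ has rank one, $\Sym^2 V^*_x$ is a line and any element of $\Hol^{\nabla,D}(x)$ acts on it as a scalar, so a positive definite fixed point exists if and only if $\Hol^{\nabla,D}(x)=\{\mathrm{Id}\}$. Since $D$ is bracket-generating and therefore completely controllable, Theorem~\ref{th:TGF} reduces the existence of $\tensorg$ to this triviality for some (equivalently every) $x\in M$. Applying Theorem~\ref{th:affineAS}(b) with the selector $\sigma$ produces the unique representative $\widetilde\nabla\in[\nabla]_D$ with $\iota_\sigma R^{\widetilde\nabla}=0$ and $\Hol^{\widetilde\nabla}(x)=\Hol^{\nabla,D}(x)$; its connection one-form is $\widetilde\omega=\omega+d\omega(\chi)\,\tau$, and a direct expansion gives $d\widetilde\omega=d_{\tau\otimes\chi}\omega$. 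Since $V$ is a line bundle trivialized by $Z$ and $M$ is simply connected, the holonomy of $\widetilde\nabla$ is given by exponentials of $\int_\gamma\widetilde\omega$ over loops $\gamma\subseteq M$ and is trivial if and only if $\widetilde\omega$ is closed. Concatenating these equivalences produces the desired statement.

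The delicate step is this last chain: one must combine Theorem~\ref{th:affineAS}(b) to obtain the adapted representative $\widetilde\nabla$, the abelian nature of the structure group on the rank-one $V$ to identify the curvature of $\widetilde\nabla$ with $d\widetilde\omega$, and the simple connectedness of $M$ to pass from closedness of $\widetilde\omega$ to vanishing of the holonomy. The verification that $\sigma$ is a selector, the computation of $\omega|_D$, and the identification $d\widetilde\omega=d_{\tau\otimes\chi}\omega$ are otherwise routine.
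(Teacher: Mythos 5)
Your proposal is correct and follows essentially the same route as the paper: both hinge on verifying that $\tau\otimes\chi$ is a selector, identifying the vertical connection's form with $\calL_Z\tau$ on $D$, exploiting the rank-one (hence abelian) structure so that the adapted curvature becomes $d_{\tau\otimes\chi}\calL_Z\tau\otimes\id_V$, and using simple connectedness together with the bracket-generating property for sufficiency. The only cosmetic difference is that you compute the adapted connection form $\widetilde\omega$ and conclude via closedness/exactness of $\widetilde\omega$, whereas the paper computes $R^\nabla_{\tau\otimes\chi}$ directly from Proposition~\ref{prop:FlatNabla} and invokes the horizontal Ambrose--Singer theorem; these are the same object by the identity $\widetilde\Omega=\Omega_\chi$ established in the proof of Theorem~\ref{th:AS}.
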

Recall from Lemma~\ref{lemma:Selector}~(d) that \eqref{1dimdchi} is indeed independent of choice of $\chi$.
\begin{proof}
Let $\pr_D$ and $\pr_V$ be the respective projections to $D$ and $V$ relative to the direct sum $TM = D \oplus V$. Since $\pr_D^* d\tau$ never vanishes, $D$ must be bracket-generating and equiregular of step~2. Furthermore, if $\chi$ is any two-vector field such that $d\tau(\chi) = -1$, then $\tau \otimes \chi$ is a selector of $D$.
Define a metric $\nabla$ on $V$ by $\nabla_X Z = \pr_V [\pr_D X,Z] = (\calL_Z \tau)(\pr_D X) Z$ for any $X \in \Gamma(TM)$. Define $\alpha \in \Gamma(T^*M \otimes \gl(V))$ as $\alpha(X) v = (\calL_Z \tau)(\pr_DX) v = (\calL_Z \tau)(X) v$ for any $v \in V$. We use Proposition~\ref{prop:FlatNabla} to obtain that
\begin{linenomath}
\begin{align*}
R^{\nabla}_{\tau \otimes \chi} & = (\id + L^\nabla \iota_{\tau \otimes \chi}) (L^\nabla \alpha - [\alpha, \alpha]) = (\id + d^{\nabla} \iota_{\tau \otimes \chi}) d^\nabla \alpha \\
&  = d^{\nabla}( \id + \iota_{\tau \otimes \chi} d^\nabla) \alpha = d^\nabla(\id + \iota_{\tau \otimes \chi} d^\nabla) \calL_Z \tau \otimes \id_V \\
& = \left( d(\id + \iota_{\tau \otimes \chi} d) \calL_Z \tau \right) \otimes \id_V = d_{\tau \otimes \chi} \calL_Z \tau \otimes \id_V.
\end{align*}
\end{linenomath}
Since $V$ is one-dimensional, $R^\nabla_\chi = 0$ is a necessary condition for the existence of a metric $\tensorg$ such that $D$ and $V$ are orthogonal and the foliation of $V$ is totally geodesic. Since $M$ is simply connected and $D$ is bracket-generating, it is also a sufficient condition.
\end{proof}

\begin{example}
Consider $\mathbb{R}^3$ with coordinates $(x,y,z)$. Consider the following global basis of the tangent bundle,
\begin{eqnarray*}
X & = & \frac{\partial}{\partial x} - \frac{1}{2} y \frac{\partial}{\partial z} \, , \\
Y & = & \frac{\partial}{\partial y} + \frac{1}{2} x \frac{\partial}{\partial z} \, , \\
Z & = & \frac{\partial}{\partial z} + \phi_1 X + \phi_2 Y \, ,
\end{eqnarray*}
where $\phi_1$ and $\phi_2$ are two smooth arbitrary functions on $\mathbb{R}^3$. Define $D$ to be the span of $X$ and $Y$ and let $V = V(\phi_1,\phi_2)$ be the span of $Z$. For which functions $\phi_1$ and $\phi_2$ does there exist a Riemannian metric $\tensorg$ such that the foliation $\calF$ of $V$ is totally geodesic and $D$ is the $\tensorg$-orthogonal complement of $V$?

We define $\tau$ such that $\tau(X) = 0$, $\tau(Y) = 0$ and $\tau(Z) = \tau( \frac{\partial}{\partial z}) = 1$. Then
\[\tau = dz + \frac{1}{2} y dx - \frac{1}{2} x dy.\]
The unique element $\chi$ in $\bigwedge^2 D$ satisfying $d \tau (\chi) = -1$ is $\chi = X \wedge Y.$ Its Lie derivative with respect to $Z$ is given by
\[\calL_{Z} \tau =  - \phi_1 dy + \phi_2 dx.\]
Furthermore,
\begin{linenomath}
\begin{align*}
d_{\chi \otimes \tau} \calL_Z \tau &= d( \calL_Z \tau + (d \calL_Z \tau)(X,Y) \tau) = d( - \phi dy + \phi_2 dx - (X \phi_1 + Y \phi_2) \tau) \\
&= - d\phi_1 \wedge dy + d\phi_2 \wedge dx  - d(X \phi_1 + Y \phi_2) \wedge \tau - (X\phi_1 + Y\phi_2) d\tau \\
& = - d(X\phi_1 + Y\phi_2) \wedge \tau.
\end{align*}
\end{linenomath}
Hence $d_{\tau \otimes \chi} \calL_Z \tau = 0$ if and only if $X(X\phi_1 + Y\phi_2) =0$ and $Y(X\phi_1 + Y\phi_2) = 0$, which happens if and only if
\[X\phi_1 + Y \phi_2 = C,\]
for some constant $C$.
\end{example}
In the special case of a Riemannian foliation, we can write the above as follows. Let $M$ be a simply connected Riemannian manifold with metric $\widetilde \tensorg$. Let $V = \spn Z$ be an integrable subbundle of $TM$ with corresponding foliation $\calF$ assumed to be Riemannian. By normalizing $Z$, we may suppose that $\|Z \|_{\widetilde \tensorg} = 1$. Let $N$ be the mean curvature vector field of $\calF$, defined as $N = \II(Z,Z)$ with $\II$ being the second fundamental form.
Let $\calR$ be the curvature of $D$ with respect to $V$, i.e., the vector-valued two-form defined by $\calR(X, Y) = \pr_V [\pr_D X, \pr_DY] .$ Assume that this never vanishes. For an arbitrary function $f$ define the gradient $\nabla f$ and horizontal gradient $\nabla^D$ by respectively
\[df(X) = \widetilde \tensorg(X, \nabla f) , \qquad \qquad df(\pr_D X) = \widetilde \tensorg(X, \nabla^D f).\]
Then we have the following identity.
\begin{corollary}
We can find a Riemannian metric $\tensorg$ such that $D$ and $V$ are orthogonal and $\calF$ is totally geodesic if and only if there exists $f \in C^\infty(M)$ such that
\[N = - \nabla^D \log \|\calR\|_{\widetilde \tensorg} + \nabla f.\]
\end{corollary}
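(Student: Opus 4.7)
The plan is to apply Proposition~\ref{prop:Zfoliation} and then massage the resulting analytic condition using the Riemannian foliation hypothesis.

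Set $\tau := \widetilde \tensorg(Z, \cdot)$; since $\|Z\|_{\widetilde \tensorg} = 1$ and $D = V^\perp$, one has $\tau(Z) = 1$ and $\ker \tau = D$. For $X, Y \in \Gamma(D)$, $\calR(X, Y) = \pr_V[X, Y] = \tau([X, Y])\,Z = -d\tau(X, Y)\,Z$, so non-vanishing of $\calR$ is exactly non-vanishing of $d\tau|_{\bigwedge^2 D}$, and $\|\calR\|_{\widetilde \tensorg} = \|d\tau|_{\bigwedge^2 D}\|_{\widetilde \tensorg}$. Proposition~\ref{prop:Zfoliation}, Lemma~\ref{lemma:Selector}(d), and the simple connectedness of $M$ together reduce the existence of the desired $\tensorg$ to the existence of $\phi \in C^\infty(M)$ satisfying $d\phi|_D = (\calL_Z \tau)|_D$. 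A short Levi-Civita computation using $\|Z\| = 1$ and $N = \nabla_Z^{\widetilde \tensorg} Z$ yields $\widetilde \tensorg(Z, [Z, X]) = -\widetilde \tensorg(N, X)$ for $X \in \Gamma(D)$; combined with Cartan's formula $\calL_Z \tau = \iota_Z d\tau$, this gives $(\calL_Z \tau)|_D = \widetilde \tensorg(N, \cdot)|_D$, so the condition becomes $\nabla^D \phi = N$.

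The crux is the claim that any such $\phi$ automatically satisfies $Z\phi = -Z \log \|\calR\|_{\widetilde \tensorg}$. To see this, observe that $\alpha := d\phi - \calL_Z \tau$ vanishes on $D$ and therefore equals $g\tau$ with $g = \alpha(Z) = Z\phi$; applying $d$ (noting $d\calL_Z \tau = \calL_Z d\tau$) and restricting to $\bigwedge^2 D$ gives
\[(\calL_Z d\tau)\big|_{\bigwedge^2 D} = -(Z\phi)\, d\tau\big|_{\bigwedge^2 D}.\]
On the other hand, in an orthonormal frame $e_1, \dots, e_k$ of $D$, expanding $Z \|d\tau|_{\bigwedge^2 D}\|^2 = \sum_{i<j} 2\, d\tau(e_i, e_j)\, Z(d\tau(e_i, e_j))$ by the Leibniz rule for the Lie derivative produces correction terms $d\tau([Z, e_i], e_j) + d\tau(e_i, [Z, e_j])$: their vertical contributions (proportional to $\widetilde \tensorg(N, e_i)\,\widetilde \tensorg(N, e_j)$) cancel by antisymmetry in $i,j$, and their horizontal contributions vanish because the Riemannian foliation hypothesis forces the matrix $a_i^k := \widetilde \tensorg(\pr_D[Z, e_i], e_k)$ to be antisymmetric, whence pairing it against the symmetric tensor $W_{ik} := \sum_j d\tau(e_i, e_j)\, d\tau(e_k, e_j)$ gives zero. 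Hence $Z \|\calR\|_{\widetilde \tensorg}^2 = 2 \langle d\tau, \calL_Z d\tau\rangle_{\bigwedge^2 D} = -2(Z\phi)\|\calR\|_{\widetilde \tensorg}^2$, proving the claim.

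With the claim in hand, setting $f := \phi + \log \|\calR\|_{\widetilde \tensorg}$ gives $Zf = 0$, so $\nabla f = \nabla^D f = N + \nabla^D \log \|\calR\|_{\widetilde \tensorg}$, which is the asserted identity. Conversely, pairing $N = -\nabla^D \log \|\calR\|_{\widetilde \tensorg} + \nabla f$ against $Z$ forces $Zf = 0$ (since $N$ and $\nabla^D \log \|\calR\|_{\widetilde \tensorg}$ lie in $D$), and then $\phi := f - \log \|\calR\|_{\widetilde \tensorg}$ satisfies $\nabla^D \phi = N$, delivering the totally geodesic condition. The main obstacle is the Lie-derivative identity in the key claim: the flow of $Z$ does not in general preserve $D$, so one must carefully unwind Leibniz corrections and invoke the Riemannian foliation property precisely to annihilate the frame-dependent terms.
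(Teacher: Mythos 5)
Your proof is correct and follows essentially the same route as the paper: both reduce to Proposition~\ref{prop:Zfoliation}, identify $(\calL_Z\tau)|_D$ with $\flat N|_D$, and use the Riemannian foliation hypothesis to show that the vertical correction is exactly $-Z\log\|\calR\|_{\widetilde\tensorg}$ (the paper does this by evaluating $d\calL_Z\tau$ on the explicit selector $\chi=-\|\pr_D^*d\tau\|^{-2}\sharp\pr_D^*d\tau$, you by invoking Lemma~\ref{lemma:Selector}(d) and pinning down $Z\phi$). Your orthonormal-frame computation is just an unpacking of the paper's one-line identity $\widetilde\tensorg(\calL_Z\pr_D^*d\tau,\pr_D^*d\tau)=\tfrac12 Z\|\pr_D^*d\tau\|^2$, so the two arguments coincide in substance.
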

\begin{proof}
Since $\calR$ is non-vanishing, $D$ is bracket-generating and equiregular step~$2$. Let $\flat\colon TM \to T^*M$ be the bijection $v \mapsto \widetilde \tensorg(v, \cdot)$. We write its inverse as $\sharp\colon T^*M \to TM$ and use the same symbol for the identifications of $\bigwedge^k TM$ and $\bigwedge^k T^*M$ through~$\widetilde \tensorg$. Define $\tau = \flat Z$. For every $X \in\Gamma(TM)$, one has
\[(\calL_Z \tau)(X) = \tensorg(Z, [\pr_D X, Z]) = - \frac{1}{2} (\calL_{\pr_D X} \widetilde \tensorg)(Z,Z) = \widetilde \tensorg(X, \II(Z,Z)) = \flat N(X).\]
Observe furthermore that, for any $X, Y \in \Gamma(D)$, we have
\[d\calL_Z \tau(X,Y) =  \calL_Z d\tau(X,Y) = (\calL_Z \pr_D^* d\tau)(X,Y),\]
since
\begin{linenomath}
\begin{align*} & \calL_Z (\id - \pr_D^*)d\tau(X,Y) = - d\tau(\pr_V [Z,X], Y) - d\tau(X, \pr_V[Z,Y]) \\
& = - d\tau(Z, Y) \tau([Z,X]) - d\tau(X, Z) \tau([Z,Y]) = 0.\end{align*}
\end{linenomath}

Define a selector $ \tau \otimes \chi$ of $D$ by
\[\chi = - \frac{1}{\| \pr^*_D d\tau\|^2} \sharp \pr_D^* d\tau.\]
Note that
\[d\calL_Z(\chi) = -\frac{1}{\| \pr_D^* d\tau\|^2} (\calL_Z \pr_D^* d\tau)(\sharp \pr_D^* \tau) =  -\frac{1}{\| \pr_D^* d\tau\|^2} \widetilde \tensorg(\calL_Z \pr_D^* d\tau, \pr_D^* \tau).\]
Furthermore, since $\calF$ is a Riemannian foliation, one gets
\[\frac{1}{\| \pr_D^* d\tau\|^2} \widetilde \tensorg(\calL_Z \pr_D^* d\tau, \pr_D^* \tau) =\frac{Z \| \pr_D^* d\tau\|^2}{2\| \pr_D^* d\tau\|^2}  = Z \log \|\pr_D^* d\tau\|.\]
Using that $\|\pr_D^* d\tau\| = \| \calR\|$, we apply Proposition~\ref{prop:Zfoliation} to deduce that there exists a metric making $D$ and $V$ orthogonal and $\calF$ totally geodesic if and only if
\begin{linenomath}
\begin{align*} 0 & = d(\flat N - \pr_V^* d\log \|\calR\|) = d(\flat N - \pr_V^* d\log \|\calR\| + d \log \|\calR\| )\\
& = d(\flat N + \pr_D^* d\log \| \calR\|).  \end{align*}
\end{linenomath}
Since $M$ is simply connected, there exists a smooth function $f$ on $M$ such that $\flat N + \pr_D^* d\log \|\calR\| = df$. Apply $\sharp$ to get the desired conclusion.
\end{proof}

\begin{remark}
We could have reached the conclusion of Proposition~\ref{prop:Zfoliation} without using horizontal holonomy as well. The argument goes as follows.
Given any Riemannian metric such that $D$ and $V$ are orthogonal, if $V$ is spanned by a unit-length vector field $\widetilde Z$ and $\widetilde \tau$ denotes the unique one-form verifying
$\widetilde \tau (D) = 0$ and $\widetilde \tau (\widetilde Z) = 1$, then $\tensorg(X, \II(\widetilde Z, \widetilde Z)) =( \calL_{\widetilde Z} \widetilde \tau)(X)$. Hence, finding such a basis vector field $\tilde Z$ such that $\calL_{\tilde Z} \tilde \tau = 0$ is equivalent to showing the existence of a metric making $\calF$ totally geodesic and orthogonal to $D$. We prove that the existence of $\tilde Z$ and \eqref{1dimdchi}.

Assume there exists such a vector field $\widetilde Z$. For every smooth function $f$, set  $Z = Z_f = e^f \widetilde Z$ and $\tau = e^{-f} \widetilde \tau$.  One deduces that
\[\calL_Z \tau = e^{-f} \iota_Z (- df \wedge \widetilde \tau + d \widetilde \tau) = \pr_D^* df.\]
Since this form coincides with $df$ on $D$, we have by Lemma~\ref{lemma:Selector}~(d) that
\[d_{\tau \otimes \chi} \pr_D^* df = d_{\tau \otimes \chi} df = 0.\]
Conversely, if $d_{\tau \otimes \chi} \calL_Z \tau = 0$ then again by Lemma~\ref{lemma:Selector}~(d), there exists a one-form $\beta$ such that $\beta|_D = \calL_Z \tau |_D$ and $d\beta = 0$. However, since $M$ is simply connected, then $\beta = df$ for some function $f$. As $\calL_Z \tau (Z) = 0$, it follows that $\calL_Z \tau = \pr_D^* df.$ The vector field $\widetilde Z = e^{-f} Z$ consequently has the desired properties.
\end{remark}

\subsection{Lie groups}

Let $G$ be a connected Lie group with a connected subgroup $K$. Let $\mathfrak{g}$ and $\mathfrak{k}$ be their respective Lie algebras. Let $\mathfrak{p}$ be any subspace such that $\mathfrak{g} = \mathfrak{p} \oplus \mathfrak{k}$ and define $D$ and $V$ by left translation of $\mathfrak{p}$ and $\mathfrak{k}$ respectively. The subbundle $V$ is then integrable with corresponding foliation $\calF = \{ a \cdot K \, : \, a \in F\}$. We again try to determine if there exists a Riemannian metric on $G$ such that $D$ and $V$ are orthogonal and $\calF$ is a totally geodesic foliation.

We use the same notation for an element in $\mathfrak{g}$ and its corresponding left invariant vector field. Consider the connection $\nabla$ on $V$ defined by
\[\nabla_A C := \pr_V [A,C] , \qquad \text{for any  $A \in \mathfrak{g}$, $C \in \mathfrak{k}$.}\]
If $A,B \in \mathfrak{g}$ and $C \in \mathfrak{k}$, the curvature of $\nabla$ is given by
\begin{linenomath}
\begin{align} \label{RLeftInv} R^\nabla(A,B)C & = \pr_V\left( [A, \pr_V [B,C]] - [B, \pr_V [A,C]] - [[A, B],C] \right) \\ \nonumber
& = \pr_V\left( [A, \pr_D [B,C]] - [B, \pr_D [A,C]]  \right).
\end{align}
\end{linenomath}
Introduce the connection $\omega$ on $\GL(\Sym^2 V^*)$ corresponding to $\nabla$.
We next provide a positive answer to the previous question in particular cases.
\begin{enumerate}[\rm (a)]
\item If $K$ is a normal subgroup, i.e., if $\mathfrak{k}$ is an ideal, then $R^{\nabla} = 0$, and therefore $\Hol^{\omega,D} \subseteq \Hol^{\omega}$ which reduces to the identity element. It follows that any inner product on $\mathfrak{k}$ can be extended to a Riemannian metric making $D$ orthogonal to $V$ and the foliation of $V$ totally geodesic. Since
\[V|_a = a \cdot \mathfrak{k} = \mathfrak{k} \cdot a, \qquad a \in G,\]
the desired Riemannian metric $\tensorg$ is on the form $\tensorg = \pr_D^* \tensorg^D + \pr_V^* \tensorg^V$, where $\tensorg^D$ is an arbitrary metric on $D$, while $\tensorg^V$ is the right translation of any inner product on $\mathfrak{k}$.
\item Assume that $[\mathfrak{k}, \mathfrak{p}] \subseteq \mathfrak{p}$. Then, one necessarily has that $\mathfrak{p} + [\mathfrak{p}, \mathfrak{p}] = \mathfrak{g}$, i.e., $D$ is equiregular of step~$2$. Furthermore, for any $A,B \in\mathfrak{g}$, $C \in \mathfrak{k}$, one has
\[ R^\nabla(A,B) C  = -[ \pr_{\mathfrak{k}} [\pr_{\mathfrak p} A, \pr_{\mathfrak p} B], C].\]
Let $\chi$ be any selector of $D$ and notice that $R^{\nabla}(\chi(A)) C = - [\pr_{\mathfrak k} A, C]$ for any $A \in \mathfrak{g}.$ Define $\eta^0 = R^{\nabla}$ and $\eta^1 = L^\nabla \iota_\chi \eta^0$. Using \eqref{EtaK}, we get $\eta^1 = - R^{\nabla}$, and, as a consequence, $R^\nabla_{\chi} = 0$. This reflects the fact that if we give $V$ a metric by left translation of any inner product on $\mathfrak{k}$ and extend this metric to $TM$ in an arbitrary way such that $D$ is orthogonal to $V$, then $\calF$ is a totally geodesic foliation.
\end{enumerate}
The general case is more complicated and is left for future research.

\subsection{An example with Carnot groups}
A Carnot group of step $r$ is a simply connected nilpotent Lie group with a Lie algebra $\mathfrak{g}$ with a given decomposition $\mathfrak{g} = \mathfrak{g}_1 \oplus \mathfrak{g}_2 \oplus \cdots \oplus \mathfrak{g}_r$ satisfying $[\mathfrak{g}_1, \mathfrak{g}_k] = \mathfrak{g}_{k+1}$ for $1 \leq k \leq r-1$ and $[\mathfrak{g}_1, \mathfrak{g}_r] = 0$.

For such a group we have the following result, where $\lfloor x \rfloor$ denotes the floor function, i.e.
\[\lfloor x \rfloor = \max \{ n \in \mathbb{Z} \, \colon \, x \geq n\}.\]
\begin{proposition} \label{prop:Carnot}
Define
\[\mathfrak{p}_1 = \bigoplus_{k \, \mathrm{odd}} \mathfrak{g}_k, \qquad \mathfrak{p}_2 =
\bigoplus_{\begin{subarray}{c} k \, \mathrm{even}\\  k \leq \lfloor r/2 \rfloor \end{subarray}} \mathfrak{g}_k, \qquad \mathfrak{k} =\bigoplus_{\begin{subarray}{c} k \, \mathrm{even}\\  k > \lfloor r/2 \rfloor \end{subarray}} \mathfrak{g}_k,\]
Let $D$ and $V$ be subbundles of $TG$ obtained by left translation of $\mathfrak{p}_1 \oplus \mathfrak{p}_2$ and $\mathfrak{k}$, respectively.
Then there exists a Riemannian metric on $M$ such that $D$ and $V$ are orthogonal and the foliation of $V$ is totally geodesic if and only if $[\mathfrak{p}_2, \mathfrak{k}] =0$. Furthermore, if $K$ is the connected subgroup of $G$ corresponding to~$\mathfrak{k}$, then the Ehresmann connection~$D$ on $\pi\colon G \to G/K$ can be made principal under a multiplication on the fibers of~$\pi$ if and only if $[\mathfrak{p}_2, \mathfrak{k}] = 0$.
\end{proposition}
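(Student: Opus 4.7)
The plan is to exploit the grading of $\mathfrak{g}$ to control the bracket relations between $\mathfrak{p}_1$, $\mathfrak{p}_2$ and $\mathfrak{k}$, and then combine these with Theorem~\ref{th:TGF} and Theorem~\ref{th:SUB}. First I would record the following elementary consequences of $[\mathfrak{g}_i,\mathfrak{g}_j]\subseteq \mathfrak{g}_{i+j}$: the subspace $\mathfrak{k}$ is an abelian subalgebra (since any bracket lands in degree greater than $r$), so $K$ is a simply connected vector group; $D$ contains $\mathfrak{g}_1$ and is therefore bracket-generating; $[\mathfrak{p}_1,\mathfrak{k}]\subseteq \mathfrak{p}_1$ (odd plus large even is odd, hence either lies in $\mathfrak{p}_1$ or vanishes); and $[\mathfrak{p}_2,\mathfrak{k}]\subseteq \mathfrak{k}$ (small even plus large even is either a large even degree or exceeds $r$).

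For the principal bundle statement, the fibration $\pi\colon G\to G/K$ is automatically a principal $K$-bundle under right multiplication, and $D$ is invariant under this action if and only if $[\mathfrak{k},\mathfrak{p}_1\oplus\mathfrak{p}_2]\subseteq \mathfrak{p}_1\oplus\mathfrak{p}_2$; the first bracket relation above is automatic, so this condition reads $[\mathfrak{p}_2,\mathfrak{k}]\subseteq \mathfrak{p}_1\oplus\mathfrak{p}_2$, which together with $[\mathfrak{p}_2,\mathfrak{k}]\subseteq \mathfrak{k}$ is equivalent to $[\mathfrak{p}_2,\mathfrak{k}]=0$. Conversely, if $\pi$ admits any principal bundle structure with $D$ as principal connection, Theorem~\ref{th:SUB} forces $\Hol^{\nabla,D}(e)=\{\mathrm{Id}\}$, from which every inner product on $V_e$ extends by parallel transport to a metric on $V$ parallel along $D$, reducing us to the necessity argument for the totally geodesic statement.

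For the totally geodesic statement, sufficiency is immediate from what has already been established: if $[\mathfrak{p}_2,\mathfrak{k}]=0$, then for any left-invariant $X\in\mathfrak{p}_1\oplus\mathfrak{p}_2$ and $C\in\mathfrak{k}$ one has $[X,C]\in\mathfrak{p}_1\subseteq D$, so $\nabla_X C=0$. Left-translating any inner product on $\mathfrak{k}$ therefore yields a $\tensorg^V\in\Gamma(\Sym^2 V^*)$ parallel along $D$, and Theorem~\ref{th:TGF} produces the desired Riemannian metric on $G$.

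Necessity is the main point. Assume a positive definite $\tensorg^V$ satisfies $\nabla_X\tensorg^V=0$ for all $X\in\Gamma(D)$. For $X\in\mathfrak{g}_1$ and $C\in\mathfrak{k}$ (left-invariant), $[X,C]$ has odd degree, so $[X,C]\in\mathfrak{p}_1\subseteq D$ and hence $\nabla_X C=0$. Evaluating parallelism on left-invariant $C_1,C_2\in\mathfrak{k}$ gives $X\tensorg^V(C_1,C_2)=0$ for every $X\in\mathfrak{g}_1$; since $\mathfrak{g}_1$ bracket-generates $\mathfrak{g}$, iterating and using $[X_1,X_2]f=X_1X_2f-X_2X_1f$ shows that every left-invariant vector field annihilates $\tensorg^V(C_1,C_2)$, which is therefore constant on $G$. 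Now take a left-invariant $E\in\mathfrak{p}_2$; using $[\mathfrak{p}_2,\mathfrak{k}]\subseteq\mathfrak{k}$ and the resulting identity $\nabla_E C_i=[E,C_i]$, constancy yields
\begin{equation*}
0=E\tensorg^V(C_1,C_2)=\tensorg^V([E,C_1],C_2)+\tensorg^V(C_1,[E,C_2]),
\end{equation*}
so $\mathrm{ad}(E)|_{\mathfrak{k}}$ is antisymmetric with respect to the inner product $\tensorg^V|_e$. It is also nilpotent, since each application of $\mathrm{ad}(E)$ raises the degree by at least $2$ in the finite grading. A nilpotent antisymmetric operator on a finite-dimensional Euclidean space is zero: $A^T=-A$ gives $-\|A\|^2=\tr(A^2)$, and the right-hand side vanishes for $A$ nilpotent. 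Hence $[\mathfrak{p}_2,\mathfrak{k}]=0$. The principal hurdle is recognizing that the metric-preservation constraint supplies only antisymmetry of $\mathrm{ad}(E)|_\mathfrak{k}$, and one must marry this with the Carnot nilpotency to conclude vanishing; the bridge is the propagation of $\mathfrak{g}_1$-annihilation to constancy of $\tensorg^V(C_1,C_2)$ on all of $G$, without which the antisymmetry relation would not close up at the identity.
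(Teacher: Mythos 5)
Your proof is correct, and the necessity argument is genuinely different from the paper's. The paper fixes a selector $\chi$ adapted to the Carnot grading, shows $\iota_\chi R^\nabla=0$ so that $R^\nabla_\chi=R^\nabla$, and then, for $[\mathfrak{p}_2,\mathfrak{k}]\neq 0$, picks $A\in\mathfrak{p}_2$, $C\in\mathfrak{k}$ with $[A,C]\neq 0$ and $[A,[A,C]]=0$, writes $A=\sum_j[B^1_j,B^2_j]$ with $B^i_j\in\mathfrak{p}_1$, and evaluates $\sum_j R^\nabla_\chi(B^1_j,B^2_j)(\tensorg^V)(C,[A,C])=-\tensorg^V([A,C],[A,C])<0$ to rule out any invariant metric via Theorem~\ref{th:TGF}. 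You bypass the selector and curvature machinery entirely: you propagate $\mathfrak{g}_1$-annihilation to constancy of $\tensorg^V(C_1,C_2)$ on $G$ (using that $\mathfrak{g}_1$ Lie-generates $\mathfrak{g}$), deduce that $\ad(E)|_{\mathfrak{k}}$ is $\tensorg^V|_e$-antisymmetric for $E\in\mathfrak{p}_2$, and kill it with the elementary fact that a nilpotent antisymmetric operator vanishes. These are really two faces of the same mechanism --- the paper's choice of $C$ with $[A,[A,C]]=0$ and the resulting negative square norm is the trace computation $\tr(A^2)=-\|A\|^2$ in disguise --- but your version is more elementary and makes the role of nilpotency transparent, at the cost of not exhibiting the curvature obstruction $R^\nabla_\chi$ that the paper's general theory is built around. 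For the principal bundle statement your sufficiency argument is also more direct (right multiplication by $K$ and $\Ad(K)$-invariance of $\mathfrak{p}_1\oplus\mathfrak{p}_2$), whereas the paper constructs complete parallel left-invariant vertical fields and invokes Theorem~\ref{th:SUB}; your reduction of the converse to the totally geodesic necessity via triviality of $\Hol^{\nabla,D}$ is valid and correctly covers the case where the principal structure comes from a group other than $K$ acting by right translation. All the bracket relations you record check out against the grading, so I see no gap.
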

\begin{proof}
Observe first that the following relations hold true
\begin{equation} \label{bracketsp1p2k} \begin{array}{lll}
{[\mathfrak{p}_1, \mathfrak{p}_1]} = \mathfrak{p}_2 \oplus \mathfrak{k}, \qquad  & [\mathfrak{p}_1, \mathfrak{p}_2] \subseteq \mathfrak{p}_1, \qquad &  [\mathfrak{p}_1, \mathfrak{k} ] \subseteq \mathfrak{p}_1,\\
{[\mathfrak{p}_2, \mathfrak{p}_2]} \subseteq  \mathfrak{p}_2 \oplus \mathfrak{k}, \qquad
& [\mathfrak{p}_2, \mathfrak{k}] \subseteq \mathfrak{k}, \qquad
& [\mathfrak{k}, \mathfrak{k}] = 0,\end{array}\end{equation}
It follows that $D$ is equiregular of step 2. Define a connection $\nabla$ on $V$ as follows. If $X \in\Gamma(TM)$ and $C \in \mathfrak{k}$, then set
\[\nabla_X C = \pr_V [\pr_D X, C].\]
From \eqref{bracketsp1p2k}, we obtain that for any $A \in \mathfrak{g}$, we have
\[\nabla_A C = [\pr_{\mathfrak{p}_2} A, C].\]
If follows that the curvature $R^{\nabla}$ is given for any $C \in \mathfrak{k}$ by
\[R^{\nabla}(A,B)C = \left\{ \begin{array}{ll} \nabla_{\pr_{\mathfrak{p}_2} [B,A]}C & \text{if } A, B \in \mathfrak{p}_1  \\
0 & \text{if } A \in \mathfrak{g},  B \in \mathfrak{p}_2 \oplus \mathfrak{k},
 \end{array} \right.\]
By using the definition of a Carnot group, we can define a selector $\chi$ of $D$ such that, if $C \in \mathfrak{g}_k \subseteq \mathfrak{k}$ for $k$ even and greater than $\lfloor r /2 \rfloor$, one has $\chi(C) = \sum_{j=1}^n A_i \wedge B_i$ with $A_i \in \mathfrak{g}_1 \subseteq \mathfrak{p}_1$ and $B_i \in \mathfrak{g}_{k-1} \subseteq \mathfrak{p}_1$. Since $\pr_{\mathfrak{p}_2} [A_i, B_i] =0$, we get $\iota_{\chi} R^{\nabla} = 0$, and hence $R^\nabla_\chi = R^{\nabla}$.

If $[\mathfrak{p}_2, \mathfrak{k}] = 0$, then $R^\nabla_\chi =0$. If $[\mathfrak{p}_2, \mathfrak{k}] \neq 0$, let $A \in \mathfrak{p}_2$ and $C \in \mathfrak{k}$ be any pair of elements such that $[A, C]$ is not zero. By replacing $C$ with $\ad(A)^k C$ for an appropriate value of $k \geq 0$, we may assume that $[A,[A,C]] = 0$. Write
\[A = \sum_{j=1}^k [B^1_j , B_j^2], \qquad B^1_j, B_j^2 \in \mathfrak{p}_1.\]
Then for any metric $\tensorg^V$ on $V$, we have
\[\sum_{j=1}^n R^\nabla_\chi(B_j^1, B_j^2) (\tensorg^V)(C, [A,C]) = - \tensorg^V([A,C],[A,C]) < 0.\]
Hence $R^{\nabla}_\chi(\cdot, \cdot)\tensorg^V$ is not equal to zero for any metric $\tensorg^V$ on $V$. One thus concludes by using Theorem~\ref{th:TGF}.

As for the second statement of principal bundle structure on $\pi$, $R^{\nabla}$ vanishes on~$V$ if and only if~$[\mathfrak{p}_2, \mathfrak{k}] = 0$. If the latter holds, then all left invariant vector fields~$C$ with values in $V$ satisfy $\nabla C = 0$ and since these are complete, the rest follows from Theorem~\ref{th:SUB}.
\end{proof}

\begin{example}
On $\mathbb{R}^2$ with coordinates $(x,y)$, define the vector fields
\[A = \frac{\partial}{\partial x}, \qquad B_k = \frac{1}{k!} x^k \frac{\partial}{\partial y}, \qquad 0 \leq k \leq n.\]
Note that $[A , B_{k+1}] = B_k$ for any $k \geq 0$. Define $\mathfrak{g} = \mathfrak{g}_1 \oplus \cdots \oplus \mathfrak{g}_{n+1}$ where
\[\mathfrak{g}_1 = \spn \{A,B_{n} \}, \quad \mathfrak{g}_k = \spn\{ B_{n+1-k},\}, \ 2\leq k\leq n+1 
.\]
Let $G$ be the corresponding simply connected Lie group of $\mathfrak{g}$. Since our Lie algebra was nilpotent, we will use (global) exponential coordinates, giving a point $a \in G$ coordinates $(r_0, r_1, \dots, r_n, s)$ if $a= \exp( \sum_{k=0}^n r_k B_k + s A )$.
\begin{enumerate}[\rm (a)]
\item Define $\mathfrak{k}$, $\mathfrak{p}_1$ and $\mathfrak{p_2}$ as in Propositions~\ref{prop:Carnot}. Then $[\mathfrak{p}_2, \mathfrak{k}] =0$, so if $D$ and $V$ are obtained by left translation of $\mathfrak{p}_1 \oplus \mathfrak{p}_2$ and $\mathfrak{k}$, respectively, $D$ is a principal connection on $G/K$.
\item Consider the Abelian subalgebra $\mathfrak{k} = \spn\{ B_1, \dots, B_{n-1}\}$ with complement $\mathfrak{p} = \spn\{ A, B_0, B_n\}.$ Let $G$ be a simply connected Lie group with Lie algebra~$\mathfrak{g}$ and let~$K$ be a subgroup with Lie algebra $\mathfrak{k}$. Let $D$ and $V$ be the subbundles of~$TG$ given by left translation of $\mathfrak{p}$ and $\mathfrak{k}$, respectively. Then $D$ is an Ehresmann connection on
\[\pi\colon G \to G/K,\]
but not principal, since $[\mathfrak{p},\mathfrak{k}] $ is not contained in $\mathfrak{p}$. We next determine a new multiplication on the fibers of $\pi$ so that $D$ becomes principal. For that purpose, we consider a connection $\nabla$ on $V$, such that if $A \in \mathfrak{g}$ and $C \in \mathfrak{k}$ are two left invariant vector fields, then
\[\nabla_A C =  \pr_V [\pr_D A, C].\]
This connection satisfies \eqref{verticalC} and it is simple to verify that $R^{\nabla} = 0$. Hence, the foliation of $V$ can be made totally geodesic with orthogonal complement $D$. Furthermore, $D$ can be made into a principal connection, as the vector fields $Z_1, \dots, Z_{n-1}$ defined by
$$Z_k = \sum_{j=0}^{k-1} \frac{(-1)^k s^k}{k!} B_{k-j} $$
are complete and satisfy $\nabla Z_k = 0$.
\end{enumerate}
\end{example}

\begin{example}
Let $F^{n,r}$ be the free nilpotent Lie group on $n$ generators of step $r$, i.e., the quotient of the free Lie group on $n$ generators by the subgroup corresponding to the ideal generated by the brackets of order $r$. Define $\mathfrak{k}$, $\mathfrak{p}_1$ and $\mathfrak{p}_2$ as in Proposition~\ref{prop:Carnot}. Then $[\mathfrak{p}_2, \mathfrak{k}] = 0$ if and only if $r < 8$. Hence, for $r \geq 8$, if $D$ and $V$ are defined by left translation of respectively $\mathfrak{p}_1 \oplus \mathfrak{p}_1$ and $\mathfrak{k}$, then there does not exist a Riemannian metric making $D$ and $V$ orthogonal and the foliation of $V$ totally geodesic.
\end{example}

\bibliographystyle{habbrv}
\bibliography{Bibliography}

\begin{thebibliography}{10}

\bibitem{AgSa04}
A.~A. Agrachev and Y.~L. Sachkov.
\newblock {\em Control theory from the geometric viewpoint}, volume~87 of {\em
  Encyclopaedia of Mathematical Sciences}.
\newblock Springer-Verlag, Berlin, 2004.
\newblock Control Theory and Optimization, II.

\bibitem{AmSi53}
W.~Ambrose and I.~M. Singer.
\newblock A theorem on holonomy.
\newblock {\em Trans. Amer. Math. Soc.}, 75:428--443, 1953.

\bibitem{BaBo15}
F.~Baudoin and M.~Bonnefont.
\newblock Curvature-dimension estimates for the {L}aplace--{B}eltrami operator
  of a totally geodesic foliation.
\newblock {\em Nonlinear Anal.}, 126:159--169, 2015.

\bibitem{BBG14}
F.~Baudoin, M.~Bonnefont, and N.~Garofalo.
\newblock A sub-{R}iemannian curvature-dimension inequality, volume doubling
  property and the {P}oincar\'e inequality.
\newblock {\em Math. Ann.}, 358(3-4):833--860, 2014.

\bibitem{BaGa11}
F.~{Baudoin} and N.~{Garofalo}.
\newblock {Curvature-dimension inequalities and Ricci lower bounds for
  sub-Riemannian manifolds with transverse symmetries}.
\newblock {\em To appear in: J. Eur. Math. Soc.}, Jan. 2011, Arxiv: 1101.3590.

\bibitem{BKW14}
F.~{Baudoin}, B.~{Kim}, and J.~{Wang}.
\newblock {Transverse Weitzenb\"ock formulas and curvature dimension
  inequalities on Riemannian foliations with totally geodesic leaves}.
\newblock {\em ArXiv e-prints}, Aug. 2014, 1408.0548.

\bibitem{bel96}
A.~Bella\"{\i}che.
\newblock The tangent space in sub-{R}iemannian geometry.
\newblock In A.~Bella\"{\i}che and J.-J. Risler, editors, {\em Sub-{R}iemannian
  Geometry}, Progress in Mathematics. Birkh\"{a}user, 1996.

\bibitem{BeBo82}
L.~B{\'e}rard-Bergery and J.-P. Bourguignon.
\newblock Laplacians and {R}iemannian submersions with totally geodesic fibres.
\newblock {\em Illinois J. Math.}, 26(2):181--200, 1982.

\bibitem{BlHe83}
R.~A. Blumenthal and J.~J. Hebda.
\newblock de {R}ham decomposition theorems for foliated manifolds.
\newblock {\em Ann. Inst. Fourier (Grenoble)}, 33(2):183--198, 1983.

\bibitem{BlHe84}
R.~A. Blumenthal and J.~J. Hebda.
\newblock Complementary distributions which preserve the leaf geometry and
  applications to totally geodesic foliations.
\newblock {\em Quart. J. Math. Oxford Ser. (2)}, 35(140):383--392, 1984.

\bibitem{Bri81}
F.~G.~B. Brito.
\newblock Une obstruction g\'eom\'etrique \`a l'existence de feuilletages de
  codimension {$1$} totalement g\'eod\'esiques.
\newblock {\em J. Differential Geom.}, 16(4):675--684, 1981.

\bibitem{Cai83}
G.~Cairns.
\newblock G\'eom\'etrie globale des feuilletages totalement g\'eod\'esiques.
\newblock {\em C. R. Acad. Sci. Paris S\'er. I Math.}, 297(9):525--527, 1983.

\bibitem{Cai86}
G.~Cairns.
\newblock A general description of totally geodesic foliations.
\newblock {\em Tohoku Math. J. (2)}, 38(1):37--55, 1986.

\bibitem{CGK15}
Y.~Chitour, M.~Godoy~Molina, and P.~Kokkonen.
\newblock Symmetries of the rolling model.
\newblock {\em Math. Z.}, 281(3-4):783--805, 2015.

\bibitem{CK-MSFM}
Y.~Chitour and P.~Kokkonen.
\newblock Rolling of manifolds and controllability in dimension three.
\newblock {\em To appear in Memoires de la SMF}, pages 1--118.

\bibitem{Cho39}
W.-L. Chow.
\newblock \"{U}ber {S}ysteme von linearen partiellen {D}ifferentialgleichungen
  erster {O}rdnung.
\newblock {\em Math. Ann.}, 117:98--105, 1939.

\bibitem{Elw14}
D.~Elworthy.
\newblock Decompositions of diffusion operators and related couplings.
\newblock In {\em Stochastic analysis and applications 2014}, volume 100 of
  {\em Springer Proc. Math. Stat.}, pages 283--306. Springer, Cham, 2014.

\bibitem{FGR97}
E.~Falbel, C.~Gorodski, and M.~Rumin.
\newblock Holonomy of sub-{R}iemannian manifolds.
\newblock {\em Internat. J. Math.}, 8(3):317--344, 1997.

\bibitem{Ge1993}
Z.~Ge.
\newblock Horizontal path spaces and carnot-carath\'{e}odory metrics.
\newblock {\em Pacific J. Math.}, 161(2):255--286, 1993.

\bibitem{Grasse1990}
K.~A. Grasse and H.~J. Sussmann.
\newblock Global controllability by nice controls.
\newblock In {\em Nonlinear controllability and optimal control}, volume 133 of
  {\em Monogr. Textbooks Pure Appl. Math.}, pages 33--79. Dekker, New York,
  1990.

\bibitem{GrTh15a}
E.~{Grong} and A.~{Thalmaier}.
\newblock {Curvature-dimension inequalities on sub-Riemannian manifolds
  obtained from Riemannian foliations: part I}.
\newblock {\em Math. Z.}, 2015, DOI:10.1007/s00209-015-1534-4, to appear in
  print.

\bibitem{GrTh15b}
E.~{Grong} and A.~{Thalmaier}.
\newblock {Curvature-dimension inequalities on sub-Riemannian manifolds
  obtained from Riemannian foliations: part II}.
\newblock {\em Math. Z.}, 2015, DOI:10.1007/s00209-015-1535-3, to appear in
  print.

\bibitem{HMCK15}
B.~{Hafassa}, A.~{Mortada}, Y.~{Chitour}, and P.~{Kokkonen}.
\newblock {Horizontal Holonomy for Affine Manifolds}.
\newblock {\em J. Dynam. Control Systems}, Nov. 2015,
  DOI:10.1007/s10883-015-9274-7, to appear in print.

\bibitem{Her60}
R.~Hermann.
\newblock A sufficient condition that a mapping of {R}iemannian manifolds be a
  fibre bundle.
\newblock {\em Proc. Amer. Math. Soc.}, 11:236--242, 1960.

\bibitem{Jea14}
F.~Jean.
\newblock {\em Control of Nonholonomic Systems: from Sub-Riemannian Geometry to
  Motion Planning}.
\newblock SpringerBriefs in Mathematics. Springer International Publishing,
  2014.

\bibitem{KPP97}
T.~H. Kang, H.~K. Pak, and J.~S. Pak.
\newblock Laplacian on a totally geodesic foliation.
\newblock {\em J. Geom.}, 60(1-2):74--79, 1997.

\bibitem{KoNo63}
S.~Kobayashi and K.~Nomizu.
\newblock {\em Foundations of differential geometry. {V}ol {I}}.
\newblock Interscience Publishers, a division of John Wiley \& Sons, New
  York-London, 1963.

\bibitem{KMS93}
I.~Kol{\'a}{\v{r}}, P.~W. Michor, and J.~Slov{\'a}k.
\newblock {\em Natural operations in differential geometry}.
\newblock Springer-Verlag, Berlin, 1993.

\bibitem{Mon93}
R.~Montgomery.
\newblock Generic distributions and {L}ie algebras of vector fields.
\newblock {\em J. Differential Equations}, 103(2):387--393, 1993.

\bibitem{Nag02}
P.-A. Nagy.
\newblock Nearly {K}\"ahler geometry and {R}iemannian foliations.
\newblock {\em Asian J. Math.}, 6(3):481--504, 2002.

\bibitem{Oze56}
H.~Ozeki.
\newblock Infinitesimal holonomy groups of bundle connections.
\newblock {\em Nagoya Math. J.}, 10:105--123, 1956.

\bibitem{Ras38}
P.~K. Rashevskii.
\newblock On joining any two points of a completely nonholonomic space by an
  admissible line.
\newblock {\em Math. Ann.}, 3:83--94, 1938.

\bibitem{Rei59}
B.~L. Reinhart.
\newblock Foliated manifolds with bundle-like metrics.
\newblock {\em Ann. of Math. (2)}, 69:119--132, 1959.

\bibitem{Rum94}
M.~Rumin.
\newblock Formes diff\'erentielles sur les vari\'et\'es de contact.
\newblock {\em J. Differential Geom.}, 39(2):281--330, 1994.

\bibitem{Sarychev1990}
A.~V. Sarychev.
\newblock Homotopy properties of the space of trajectories of a completely
  nonholonomic differential system.
\newblock {\em Dokl. Akad. Nauk SSSR}, 314(6):1336--1340, 1990.

\bibitem{Sul78}
D.~Sullivan.
\newblock A foliation of geodesics is characterized by having no ``tangent
  homologies''.
\newblock {\em J. Pure Appl. Algebra}, 13(1):101--104, 1978.

\end{thebibliography}

\end{document}